  \documentclass[reqno, 11 pt]{amsart}
      
\usepackage{amssymb}       
\usepackage{amsmath}        
\usepackage{euscript}      
\usepackage{amsthm}        
\usepackage{dsfont}     
\usepackage{amsfonts}       
\usepackage{marginnote}    
\usepackage{latexsym} 
\usepackage{amsopn} 
\usepackage{geometry} 
\usepackage{amscd} 
\usepackage{mathrsfs}  
\usepackage{caption}
\usepackage{aurical}   
\usepackage[english]{babel}
\usepackage[utf8]{inputenc}
\usepackage{graphicx}
\usepackage[dvipsnames]{xcolor}
\usepackage{multicol}
\usepackage{nomencl}
\makeglossary
\makenomenclature  
\usepackage{refcount}
\usepackage{upgreek}
\usepackage{float}
\usepackage{hyperref}
\usepackage{times}
\usepackage[titletoc]{appendix}

\newcommand{\comment}[1]{}

\newcommand{\e}{{\varepsilon}}
\newcommand{\vphi}{{\varphi}}

\newtheorem{thm}{Theorem}[section]
\newtheorem*{thm*}{Theorem}
\newtheorem{prop}[thm]{Proposition}
\newtheorem{lemma}[thm]{Lemma}

\newtheorem*{cor*}{Corollary}
\newtheorem{rmk}[thm]{Remark}
\newtheorem{ex}{Example}

\numberwithin{equation}{section}

\newcommand{\ii}{{\rm i}}

\newcommand{\C}{{\mathbb C}}

\newcommand{\R}{{\mathbb R}}

\newcommand{\T}{{\mathbb T}}
\newcommand{\Z}{{\mathbb Z}}

\usepackage{bm}

\newcommand{\nnorm}[1]{{\left\vert\kern-0.25ex\left\vert\kern-0.25ex\left\vert #1 
    \right\vert\kern-0.25ex\right\vert\kern-0.25ex\right\vert}}

\usepackage{framed,enumitem}

\makeatletter

\renewcommand{\tocsection}[3]{%
\indentlabel{\@ifnotempty{#2}{\bfseries\ignorespaces#1 #2\quad}}\bfseries#3}
\def\l@subsection{\@tocline{2}{0pt}{2.5pc}{5pc}{}}
\def\l@subsubsection{\@tocline{3}{0pt}{4.5pc}{5pc}{}}
\renewcommand\tocchapter[3]{%
  \indentlabel{\@ifnotempty{#2}{\ignorespaces#2.\quad}}#3%
}

\theoremstyle{remark}

\begin{document} 
 
\title[2D Kuramoto-Sivashinsky]{Nonlinear Instability in the 2D Kuramoto-Sivashinsky equation}


%

\date{}

\author[D. Ambrose {\em et Al.}]{David M. Ambrose}
\address{\scriptsize{Department of Mathematics, Drexel University, Philadelphia, PA 19104, USA}}
\email{dma68@drexel.edu}

\author[]{Anna L. Mazzucato}
\address{\scriptsize{Department of Mathematics, Penn State University, University Park, PA 16802, USA}}
\email{alm24@psu.edu}

\author[]{Riccardo Montalto} 
\address{\scriptsize{Dipartimento di Matematica, Universit\`a degli Studi di Milano, Via Saldini 50, I-20133, Milano, Italy}}
\email{riccardo.montalto@unimi.it}


\makeatletter
\@namedef{subjclassname@2020}{\textup{2020} Mathematics Subject Classification}
\makeatother
 
\keywords{Kuramoto-Sivashinsky equations, growing modes, nonlinear instability, energy estimates, para-differential calculus} 

\subjclass[2020]{35K25 · 35K58 · 35B65 · 35B10· 35B40}

   
\begin{abstract}   
In this paper we analyze the Kuramoto-Sivashinsky equation (KSE), a model of flame-front propagation, on a two-dimensional square torus of arbitrary size $2 L$ with $L > \pi$. In this case, the linearized equation at the origin admits a finite number of growing modes, which corresponds to the positive eigenvalues of the linear operator $- \Delta^2 - \Delta$.
The problem of analyzing the long-time behavior of solutions of the  2D KSE in two spatial dimensions remains largely open; the only global existence results are for sufficiently small tori, or for sufficiently anisotropic and thin domains, due to the lack of good {\em a priori} estimates.
 The main purpose of this paper is to analyze the instability around growing modes at the nonlinear level. More precisely, we consider the maximal growing mode $\lambda_0$ and we show that there is a finite dimensional manifold of initial data of size $\e$ arbitrarily small such that the corresponding solutions become of size $O(1)$ over a time-scale of order $\log(\e^{- 1})$. The proof is based on several ingredients such as a sharp quantitative construction of an approximate solution bifurcating from the maximal linearly growing mode, a fixed point argument with exponential weights to construct local in time solutions, and a continuation argument based on sharp energy estimates and para-differential calculus. 
\end{abstract}

\maketitle

\tableofcontents

\section{Introduction}

In this work, we study the long-time dynamics of the Kuramoto-Sivashinsky equation  (KSE for short) in two space dimensions under periodic boundary conditions. The KSE is a model for long-wave instability in dissipative systems and was derived from a coordinate-free model of interface dynamics for flame-front propagation in combustion \cite{Kura,KT76,Siv77}. Because the KSE models the dynamics of an interface, the most physical dimensions are 1 and 2.

The KSE is a non-linear, hyperdissipative evolution equation that has both a scalar form for a potential $v$ with a non-linearity in conservative form, and a vector form for its gradient with a Burgers-type nonlinearity. 
We employ the scalar form of the equation that can be written as:
  $$
  \partial_t v + \Delta^2 v + \Delta v + \Pi(|\nabla  v|^2) = 0, \quad x =(x_1, x_2) \in \T^2_{L} :=  \T_{L} \times \T_{L},
  $$
  where 
  $$
  \T_{L} := \R /(2 L \Z) \simeq [- L, L], 
  $$
  and $\Pi v := v - \frac{1}{2 L} \int_{\T^2_L} v(x)\, d x $, $v : \R^2 \to \R$ is $(2L)$-periodic with respect to $x = (x_1, x_2).$  We note that we have chosen to use a square torus, 
  rather than allowing two independent dimensions such as $[0,L_{1}]\times[0,L_{2}].$  
  This is primarily for simplicity, so as not to over-complicate the proofs to be presented.
  An additional benefit, though, is that this choice also highlights how our results do not depend on any anisotropy in the problem. In particular,  we are not relying on any  thinness of the domain, 
  and thus our results  are genuinely two-dimensional.

  We shall assume that 
  \begin{equation}\label{condizione periodi}
 L  >  \pi\,,
  \end{equation}
  which gives rise to growing modes for the linearized operator.
  By rescaling the space variables $x$, we can set the problem on the standard torus $\T^2$. Namely, we rescale
  $$
  x \mapsto \omega x \quad \text{where} \quad   \omega:= \frac{\pi}{L},
  $$
  and we set 
$$
v(x) = \vphi (\omega  x),
$$
with 
$$
\vphi : \R^2 \to \R \quad 2\pi-\text{periodic in both variables.}
$$
The equation for $\vphi$ becomes 
\begin{equation}\label{KURAMOTO-RESCALED-VARIABLES}
\partial_t \vphi + \Delta^2_\omega \vphi + \Delta_\omega \vphi + \omega^2 \Pi (|\nabla \vphi|^2) = 0, \quad x = (x_1, x_2) \in \T^2,
\end{equation}
where 
$$
\Delta_\omega := \omega^2 \Delta, \quad \Delta_\omega^2 := \omega^4 \Delta^2 = \omega^4 ( \partial_{x_1}^2 + \partial_{x_2}^2 )^2\,.  
$$
Written in terms of frequency, $\omega,$ the condition \eqref{condizione periodi} 
becomes
\begin{equation}\label{condizione frequenze}
0 < \omega  < 1 \,. 
\end{equation}
We study the instability for the Cauchy problem 
\begin{equation}\label{KURAMOTO-RESCALED-VARIABLES2}
\begin{cases}
& \partial_t \vphi  = {\mathcal L}_\omega \vphi - \omega^2 \Pi (|\nabla \vphi|^2) , \quad x= (x_1, x_2) \in \T^2 \\
& \vphi(0, x) = \vphi_{in }(x),
\end{cases}
\end{equation}
where 
$$
{\mathcal L}_\omega := -  \Delta_\omega^2 - \Delta_\omega\,. 
$$
More precisely, we shall prove that for any $0 < \e \ll 1$ arbitrarily small,  there exists a finite dimensional manifold of initial data of size $O(\e)$ in the Sobolev space $H^s$, for which there exists a unique solution $\vphi$ up to time $T_\e \simeq \log(\e^{- 1})$ such that $\| \vphi(T_\e) \|_{H^s} = O(1)$. For this result to hold,  $s$ has to be taken large enough, specifically bigger than $4$. We shall prove this result in the general case in which the linearized equation can have an arbitrary number of growing modes (positive eigenvalues of the linear operator ${\mathcal L}_\omega$). The only hypothesis on the size of the torus, i.e. \eqref{condizione periodi}, guarantees that there is at least one strictly positive eigenvalue of ${\mathcal L}_\omega$, as already mentioned. 

The problem of studying the long-time dynamics of the Kuramoto-Sivashinsky equation (KSE) is far from being understood in dimension greater than 1. A main difficulty in proving global existence is the lack of a maximum principle for the KSE, due to the presence of the biharmonic operator. In dimension $d=1$, it is possible to obtain an {\em a priori} estimate on the $L^2$ norm of $u$ due to the special structure of the non-linearity, which can be written in divergence form. The $L^2$ control  allows to prove global existence by a continuation argument \cite{Tad86}.  The one-dimensional global existence result of \cite{Tad86} allows exponential growth of solutions, because the linearized operator
$\partial_t +\Delta^2+\Delta$ admits exponentially growing modes for large enough periods. 
It has then been demonstrated that this exponential growth does not actually occur, and solutions which start near the zero solution remain near the zero solution\cite{Good94}, \cite{NST85}; this is sometimes described as stability of the
zero solution in the one-dimensional KSE, but this is not stability in the usual sense in that it is not possible to control solutions to remain arbitrarily close to the zero
solution. 

For many well-studied equations such as the incompressible
Navier-Stokes or Euler equations, the availability of an
$L^2$ estimate is independent of dimension.  This situation
is markedly different for the KSE.
In fact, there is no known global estimate for any $L^p$ norm in $d>1$, and this is a primary reason why the issue of global existence of solutions to the KSE is still open. 

There is an extensive literature concerning the KSE in dimension $d=1$.  While we do not survey the full literature in the one-dimensional case, we mention results concerning analyticity of solutions (see  \cite{CEES93,Gru00} and references therein), optimal bounds on the growth of the $L^2$ norm as a function of the period $L$ \cite{BG06,GO05, GJO15, GF19, Otto09, Stan, Stan2, Stan3} (see also \cite{SSArXiv07}), and inertial manifolds
\cite{FNST88, JKT90}. 

There are few results for the higher-dimensional KSE.  Short-time existence and analyticity is known to hold in the full space with data in certain $L^p$ spaces \cite{BS07} (see also \cite{SSArXiv07}). For $d=2$, global existence holds for thin or sufficiently anisotropic domains \cite{BKRZ14, KM23, Mol00b, SellT92}. A first isotropic result was \cite{AM19}, in which Ambrose-Mazzucato studied the KSE on  $\T^2$ and proved short-time existence and analyticity with a bound on the analyticity radius for data in $L^2$ and in the Wiener algebra. They established global existence
of mild solutions for mean-free data sufficiently small in $L^2$ and in the Wiener algebra, but only in the absence of growing modes, which happens in a two-dimensional torus $[0,L_{1}]\times[0,L_{2}]$ when $L_1,\;
 L_2<2 \pi$. Subsequently, Ambrose and Mazzucato extended this to global existence of small solutions 
 when $L_{1}$ and $L_{2}$ are each slightly larger than $2\pi,$ so that there is a linearly growing mode
 present in both spatial directions \cite{AM21}; this is the only isotropic global result for the higher-dimensional 
 KSE which allows a linearly growing mode in each spatial direction.
 In \cite{FM22}, Feng-Mazzucato proved existence of mild solutions for sufficiently small data $\phi(0)$ in $L^2$ on an arbitrary interval of time $[0,T]$  in the absence of growing modes. In \cite{ALN24}, solutions of the KSE in arbitrary dimension are proved to exist when
 the initial data has low regularity, but the solutions are only global in the absence of linearly 
 growing modes.  We also mention a detailed numerical study of the two-dimensional KSE \cite{KKP15}.
 
We only consider the classical form of KSE, and not any modified or generalized KSE, for which more results are known (see e.g. \cite{GMP08, Mol00}).
One recent modified equation is the anisotropically reduced KSE \cite{LY20}, in which a maximum principle holds for
one component of the gradient of the solution; another anisotropic modification
of the KSE is studied in \cite{TKP18}.
In \cite{FM22}, Feng and Mazzucato also considered the KSE with additional linear advection and prove that global existence can be achieved in the presence of growing modes and for arbitrary data, if the advecting field  is relaxation enhancing with sufficiently small dissipation time. We also mention \cite{CZDFM21} where the advection term is given by a shear flow. Another work on equations similar in some ways to the KSE 
is \cite{BBT03}, in which the question
of global existence or singularity formation was studied for hyperviscous equations of Hamilton-Jacobi-type. We also mention the work \cite{KOS-TITI} where a 1D KSE-type equation with a dispersive term is considered, showing that the dispersive term weakens the global bounds.

In the present paper, we study the KSE in the case of the two dimensional periodic box $\T^2_L := (\R/(2L \Z))^2$ with $L > \pi$, so that we allow the presence of an arbitrary large number of linearly growing modes. We rigorously prove instability, namely, we construct initial data of arbitrarily small  size $O(\e)$ such  that in finite time $T \simeq \log(\e^{- 1})$ the corresponding solutions become of order $O(1)$ (in the $H^s$ topology). Our approach is inspired by the technique developed in \cite{Grenier},  in which the author proves some nonlinear instability results for the Euler and Prandtl equations (see also \cite{GN19}).

Throughout the paper, we use standard notation to represent function spaces. In particular, $H^s(\T^d)$, $s\in \R$, is the standard $L^2$-based Sobolev space equipped with the norm 
$$
\| u \|_{H^s} := \Big( \sum_{\xi \in \Z^d} \langle \xi \rangle^{2 s} |\widehat u(\xi)|^2 \Big)^{\frac12}, \quad \langle \xi \rangle := (1 + |\xi|^2)^{\frac12},
$$
and we denote by $H^s_0(\T^d)$ the space of $H^s$ functions with zero average. For any $T > 0$, we denote by ${\mathcal C}^0([0, T], H^s_0(\T^d))$ the space of continuous functions 
$$
[0, T] \to H^s_0(\T^d)\,, \quad t \mapsto u(t),
$$ 
equipped by the sup norm
$$
\| u \|_{{\mathcal C}^0_T H^s_x} := \sup_{t \in [0, T]} \| u(t) \|_{H^s}\,. 
$$
For any $k \geq 1$ integer we also denote by ${\mathcal C}^k([0, T], H^s_0(\T^d))$ the space of the ${\mathcal C}^k$ maps
$$
[0, T] \to H^s_0(\T^d)\,, \quad t \mapsto u(t),
$$
equipped by the norm 
$$
\| u \|_{{\mathcal C}^k_T H^s_x} := \sum_{n = 0}^k \| \partial_t^n u \|_{{\mathcal C}^0_T H^s_x}\,. 
$$ 
Given two Banach spaces $X$ and $Y$, we denote by ${\mathcal B}(X, Y)$ the space of bounded linear operators from $X$ to $Y$ equipped with the standard operator norm $\| \cdot \|_{{\mathcal B}(X, Y)}$. If $X = Y$, we use the notation ${\mathcal B}(X) \equiv {\mathcal B}(X, X)$. Given ${\mathcal H}$ a Hilbert space and a linear operator $A : {\mathcal D}(A)\to {\mathcal H}$ where ${\mathcal D}(A)$ is a dense subspace of ${\mathcal H}$, we denote by $A^*$ the standard adjoint operator of $A$. 

\noindent
Given some parameters $a_1, \ldots, a_n > 0$, we use the following notation. We write
$$
A \lesssim_{a_1, \ldots, a_n} B 
$$
if there exists a positive constant $C(a_1, \ldots, a_n) > 0$ such that 
$$
A \leq C(a_1, \ldots, a_n) B\,. 
$$
\section{Statement of the main result and ideas of the proof}

We expand a function $u \in L^2_0(\T^2)$ in Fourier series as 
$$
u (x) \equiv u(x_1, x_2) = \sum_{(\xi_1, \xi_2) \in \Z^2 \setminus \{ 0 \}} \widehat u(\xi_1, \xi_2) e^{i(x_1 \xi_1 + x_2 \xi_2)} = \sum_{\xi \in \Z^2 \setminus \{ 0 \}} \widehat u(\xi) e^{\ii x \cdot \xi}. 
$$
(We recall that $L^2_0(\T^2)$  is the subspace of mean-free elements, therefore $ \widehat u(0) = 0$.) 

We  consider the linear operator ${\mathcal L}_\omega =-  \Delta_\omega^2 - \Delta_\omega$. For any $\xi = (\xi_1, \xi_2) \in \Z^2 \setminus \{ 0 \}$, one has that 
\begin{equation}\label{autovalori 1}
\begin{aligned}
{\mathcal L}_\omega (e^{\ii x \cdot \xi} ) & = \lambda_\omega(\xi) e^{\ii x \cdot \xi}  \,, \\
\lambda_\omega (\xi) & := - \omega^4  |\xi|^4  + \omega^2 |\xi|^2      = - \omega^2 |\xi|^2 ( \omega^2 |\xi|^2 - 1 ) \,, \quad \xi \in \Z^2 \setminus \{ 0 \}\,. 
\end{aligned}
\end{equation}
The spectrum of ${\mathcal L}_\omega$ is then given by 
$$
\sigma({\mathcal L}_\omega) = \Big\{ \lambda_\omega(\xi) = - \omega^2 |\xi|^2 ( \omega^2 |\xi|^2 - 1 ) : \xi \in \Z^2 \setminus \{ 0 \} \Big\}\,. 
$$
Since  $0 < \omega <1$, one has that 
\begin{equation}\label{one positive eigenvalue}
\lambda_\omega(1, 0) = \lambda_\omega(0, 1) =  \omega^2(1 - \omega )(\omega + 1) > 0.
\end{equation}
Therefore there is at least one strictly positive eigenvalue of ${\mathcal L}_\omega$. We split the spectrum $\sigma({\mathcal L}_\omega)$ of the linear operator ${\mathcal L}_\omega$ as
$$
\begin{aligned}
& \sigma({\mathcal L}_\omega) = \sigma_+({\mathcal L}_\omega) \cup \sigma_-({\mathcal L}_\omega)\,, \\
& \sigma_+({\mathcal L}_\omega) := \Big\{ \lambda \in \sigma({\mathcal L}_\omega) : \lambda \geq 0 \Big\}\,, \\
& \sigma_-({\mathcal L}_\omega) :=  \Big\{ \lambda \in \sigma({\mathcal L}_\omega) : \lambda < 0 \Big\}\,. 
\end{aligned}
$$
By \eqref{one positive eigenvalue}, one has that $\sigma_+({\mathcal L}_\omega) \neq \emptyset$ and there is at least one eigenvalue in $\sigma_+({\mathcal L}_\omega)$ that is strictly positive. A direct calculation shows that 
\begin{equation}\label{sigma + - spettro 2}
\begin{aligned}
\sigma_+({\mathcal L}_\omega) & = \Big\{ \lambda_\omega(\xi) : 0 < |\xi| \leq \frac{1}{\omega}\Big\}\,, \\
\sigma_-({\mathcal L}_\omega) & = \Big\{ \lambda_\omega(\xi) :  |\xi| > \frac{1}{\omega}\Big\}.
\end{aligned}
\end{equation}
Hence $\sigma_+({\mathcal L}_\omega)$ is non-empty, finite and $\sigma_-({\mathcal L}_\omega)$ is infinite. Moreover there could be some integer vectors $\xi \in \Z^2 \setminus \{ 0 \}$ such that $|\xi| = \frac{1}{\omega}$. For such vectors $\xi$ one has that $\lambda_\omega(\xi) = 0$. We denote by $\lambda_0$ the maximum (strictly positive) eigenvalue of ${\mathcal L}_\omega$, namely,
\begin{equation}\label{def lambda 0}
\lambda_0 := {\rm max} \,\sigma_+({\mathcal L}_\omega) = {\rm max}\, \sigma({\mathcal L}_\omega) > 0\,. 
\end{equation}
Then 
$$
\lambda_0 = \lambda_\omega(\xi_0) \quad \text{for some} \quad \xi_0 \in \Z^2 \setminus \{ 0 \} \quad \text{with} \quad 0 < |\xi_0| < \frac{1}{\omega}\,. 
$$
The eigenspace associated to $\lambda_0$ is given by 
$$
E(\lambda_0) := {\rm span}\big\{ e^{ \ii x \cdot \xi} : |\xi| = |\xi_0| \big\}\,. 
$$

We focus on initial conditions for  \eqref{KURAMOTO-RESCALED-VARIABLES2} that are Fourier localized around 
wavenumber $|\xi_0|$; that is, we take  $\vphi_{in}(x) = \sum_{|\xi| = |\xi_0|} \widehat \vphi_{in}(\xi) e^{\ii x \cdot \xi} $, with $\widehat \vphi_{in}(\xi) = \overline{\widehat \vphi_{in}(- \xi)}$, which guarantees that $\vphi_{in}$ is real valued.  We then have that ${\mathcal L}_\omega \vphi_{in} = \lambda_0 \vphi_{in}$.  For future use, for any $n \geq 2$ we also introduce  the finite dimensional spaces
\begin{equation}\label{spaces En n geq 2}
E_n := {\rm span}\Big\{ e^{\ii x \cdot \xi} : 1 \leq |\xi| \leq n |\xi_0| \Big\}\,.
\end{equation}
We will study the Cauchy problem:
\begin{equation}\label{Prob Cauchy instab}
\begin{cases}
\partial_t \vphi = {\mathcal L}_\omega \vphi - \omega^2 \Pi (|\nabla \vphi|^2), \\
\vphi(0, x) = \e \vphi_{in}(x),
\end{cases}
\end{equation}
where $0 < \e \ll 1$ is small enough. The main result of this paper is the following theorem.

\begin{thm}\label{main theorem statement}
Let $s \geq 4$ and let $\omega$ satisfy \eqref{condizione frequenze}. There are two constants $\delta_0 \equiv \delta_0(s, \omega), \e_0 \equiv \e_0(s, \omega) > 0$ such that for any $\e \in (0, \e_0)$, for any initial  $\e \vphi_{in}$ with $\vphi_{in} \in  E(\lambda_0)$ normalized in $H^s$, i.e. $\| \vphi_{in } \|_{H^s} = 1$, there exist a time $T_\e$ with 
$$ 
\frac{1}{2 \lambda_0} \log(\e^{- 1})  < T_\e < \frac{1}{ \lambda_0} \log(\e^{- 1}),
$$ 
and a unique solution $\vphi \in {\mathcal C}^0([0, T_\e], H^s_0(\T^2)) \cap {\mathcal C}^1([0, T_\e], H^{s - 4}_0(\T^2))$ of the Cauchy problem \eqref{Prob Cauchy instab} satisfying 
$$
\| \vphi(T_\e) \|_{H^s} \geq \delta_0\,. 
$$
\end{thm}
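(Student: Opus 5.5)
We follow Grenier's scheme: build a high-order approximate solution, then control the remainder by an energy estimate. Write the approximate solution as an expansion
$$
\vphi_{app}(t) = \sum_{j=1}^{N} \e^j \vphi_j(t),
$$
where $\vphi_1(t) = e^{\lambda_0 t}\vphi_{in}$ solves the linearized equation. For $j \geq 2$, each $\vphi_j$ solves
$$
\partial_t \vphi_j = {\mathcal L}_\omega \vphi_j - \omega^2 \sum_{k+l=j} \Pi(\nabla \vphi_k \cdot \nabla \vphi_l), \qquad \vphi_j(0)=0.
$$
By induction, $\vphi_j$ lies in the finite-dimensional space $E_j$, since products of modes with $|\xi|\le k|\xi_0|$ and $|\xi|\le l|\xi_0|$ have frequencies $\le j|\xi_0|$. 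Hence all Sobolev norms of $\vphi_j$ are equivalent, with constants depending on $j$.

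The key quantitative step is the bound $\|\vphi_j(t)\|_{H^s}\le C_j e^{j\lambda_0 t}$. It follows from Duhamel's formula, because every eigenvalue of ${\mathcal L}_\omega$ is $\le \lambda_0 < j\lambda_0$ for $j\ge2$. One therefore integrates $e^{(t-\tau)\lambda_\omega(\xi)}e^{j\lambda_0\tau}$ and obtains a factor $(j\lambda_0-\lambda_\omega(\xi))^{-1}\le ((j-1)\lambda_0)^{-1}$. This gives $C_j \le C^{j}$ with a geometric constant, at least for a fixed finite $N$.

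Consequently, as long as $\e e^{\lambda_0 t}\le \nu$ with $\nu$ small, we have
$$
\|\vphi_{app}(t)-\e e^{\lambda_0 t}\vphi_{in}\|_{H^s}\lesssim (\e e^{\lambda_0 t})^2 .
$$
The residual $R_{app}=\partial_t\vphi_{app}-{\mathcal L}_\omega\vphi_{app}+\omega^2\Pi|\nabla\vphi_{app}|^2$ contains only cross terms of total order $>N$. It therefore satisfies $\|R_{app}(t)\|_{H^{s}}\lesssim (\e e^{\lambda_0 t})^{N+1}$.

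Next set $\vphi=\vphi_{app}+w$ with $w(0)=0$. The remainder obeys
$$
\partial_t w = {\mathcal L}_\omega w - 2\omega^2\Pi(\nabla\vphi_{app}\cdot\nabla w) - \omega^2\Pi|\nabla w|^2 - R_{app}.
$$
Local existence and uniqueness in ${\mathcal C}^0 H^s\cap{\mathcal C}^1H^{s-4}$ come from a fixed point argument on the Duhamel formulation. This works because the biharmonic smoothing $\|e^{t{\mathcal L}_\omega}\nabla\|$ gains derivatives with an integrable singularity $t^{-1/4}$. We would use the weighted norm $\sup_t e^{-(N+1)\lambda_0 t}\|w(t)\|_{H^s}$ to adapt to the growth of the forcing.

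To continue up to the target time, we run an $H^s$ energy estimate, pairing the $w$-equation with $\langle D\rangle^{2s}w$. The term ${\mathcal L}_\omega$ contributes $-\omega^4\|w\|_{H^{s+2}}^2+\omega^2\|w\|_{H^{s+1}}^2$. The quadratic term $\Pi|\nabla w|^2$ loses a derivative, but it is absorbed by the dissipation via tame/para-differential product estimates, valid since $s\ge4$ makes $H^{s-1}$ an algebra:
$$
|\langle \langle D\rangle^{s}(\nabla w\cdot\nabla w),\langle D\rangle^{s}w\rangle|\lesssim \|w\|_{H^s}\|w\|_{H^{s+2}}\|w\|_{H^{s}}.
$$
Interpolating $\|w\|_{H^{s+1}}^2\le\eta\|w\|_{H^{s+2}}^2+C_\eta\|w\|_{H^s}^2$ then yields
$$
\tfrac{d}{dt}\|w\|_{H^s}^2\le 2\big(\Lambda + C\|\vphi_{app}\|_{H^{s+1}}+C\|w\|_{H^s}\big)\|w\|_{H^s}^2+2\|R_{app}\|_{H^s}\|w\|_{H^s},
$$
where $\Lambda$ is a fixed constant, possibly larger than $\lambda_0$. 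This is the main obstacle: the crude constant $\Lambda$ is not $\lambda_0$. The remedy is to take $N$ so large that $(N+1)\lambda_0>\Lambda+1$. Then Gronwall, under the bootstrap assumptions $\|w\|_{H^s}\le1$ and $\e e^{\lambda_0t}\le\nu$, gives
$$
\|w(t)\|_{H^s}\lesssim(\e e^{\lambda_0 t})^{N+1}.
$$
This closes the bootstrap and extends the local solution by the continuation criterion.

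Finally, define $T_\e$ by $\e e^{\lambda_0 T_\e}=\nu$; this lies in the stated window for $\e$ small, as $\nu$ is fixed. At that time,
$$
\|\vphi(T_\e)\|_{H^s}\ge \nu - C\nu^2 - C\nu^{N+1}\ge \nu/2=:\delta_0,
$$
provided $\nu$ is small, where $\delta_0$ depends only on $s$ and $\omega$.
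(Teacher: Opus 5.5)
Your proof is correct. The approximate solution, the residual bound, the weighted fixed point and the final choice of $T_\e$ all coincide with the paper's. In particular, your $T_\e$ defined by $\e e^{\lambda_0 T_\e}=\nu$ is the paper's $T_\sigma$ with $\nu=e^{-\lambda_0\sigma}$. The genuine difference is in the continuation step.

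\textbf{The paper's route.} It paralinearizes $2{\mathcal Q}(\vphi_{app},u)+{\mathcal Q}(u,u)=T_A\cdot\nabla u+f_N$, where $f_N$ has no derivative loss. It then uses that $T_A\cdot\nabla+(T_A\cdot\nabla)^*$ and $[\Lambda^s,T_A\cdot\nabla]$ are bounded by $\|A\|_{H^3}\lesssim\e e^{\lambda_0 t}$. The dissipation is simply discarded, since it only uses $\lambda_\omega(\xi)\le\lambda_0$. So the energy inequality has the sharp rate $\lambda_0$, and $N=2$ suffices.

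\textbf{Your route.} You use the fourth-order dissipation to absorb the lost derivative, pairing in $H^{s-2}\times H^{s+2}$ and interpolating. This is more elementary: no paraproducts, no adjoint or commutator lemmas, no mollified paradifferential system. The price is a crude rate $\Lambda$; for instance $\omega^2 r-\tfrac{\omega^4}{2}r^2\le\tfrac12$, whereas $\lambda_0\le\tfrac14$. You beat it with Grenier's original device, taking $N=N(\omega)$ with $(N+1)\lambda_0>\Lambda+1$. This is harmless because the construction of $\vphi_{app}$ works for every $N$; the only cost is constants depending on a possibly large $N$.

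In fact, keeping only a small fraction $c\,\omega^4|\xi|^4$ of the dissipation for absorption makes $\Lambda$ as close to $\lambda_0$ as you like, since the spectrum is discrete. So the crude rate is an artifact, and even $N=2$ would work on your route.

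\textbf{One bookkeeping slip.} With your bootstrap hypothesis $\|w\|_{H^s}\le 1$, the term $C\|w\|_{H^s}$ in your energy inequality adds an $O(1)$ amount to the rate, which the margin ``$+1$'' does not cover. Assume instead $\|w\|_{H^s}\le\nu$; your Gronwall conclusion $\|w\|_{H^s}\lesssim\nu^{N+1}$ strictly improves this, so the bootstrap still closes. Alternatively, enlarge $N$ to dominate that constant too.
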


We must compare this result to the global existence result of \cite{AM21}. 
With the present notation, for
$\omega$ quite close to $1$ and satisfying \eqref{condizione frequenze}, and with small initial 
data, the result of of \cite{AM21} applies.  Then, solutions exist for all time
and remain small, proportional to the size of the initial data, which is consistent with Theorem \ref{main theorem statement}.  
Indeed, as $\omega\rightarrow1^{-},$ we would have $\e_{0}\rightarrow0$ and
correspondingly $\delta_{0}\rightarrow0$ as well.

The next subsection is devoted to illustrating the ideas of the proof of Theorem \ref{main theorem statement}. 

\medskip

\subsection{Ideas of the proof}\label{sotto-sezione-idee-dim}
\noindent
We consider initial data $\e \vphi_{in}$, arbitrarily small, i.e. $0 < \e \ll 1$ where $\vphi_{in}$ is an eigenfunction of ${\mathcal L}_\omega$, normalized in $H^s$, associated to the biggest (positive) eigenvalue of ${\mathcal L}_\omega$, i.e.
$$
{\mathcal L}_\omega \vphi_{in} = \lambda_0 \vphi_{in}, \quad \| \vphi_{in} \|_{H^s} = 1\,. 
$$
We note that neglecting the nonlinearity, the linear problem
$$
\begin{cases}
\partial_t \vphi_1 = {\mathcal L}_\omega \vphi_1 \\
\vphi_1(0, x) = \e \vphi_{in}(x)
\end{cases}
$$
admits the solution 
$$
\vphi_1(t, x) = \e e^{\lambda_0 t} \vphi_{in}(x)\,. 
$$
Hence $\| \vphi_1(0) \|_{H^s} = \e \ll 1$. First, we explore when this linear solution becomes of order $O(1)$ in $H^s$, namely, for which time $T_0 \equiv T_0(\e)$ one has that 
$$
\| \vphi_1(T_0) \|_{H^s} = \e e^{\lambda_0 T_0} =  1\,. 
$$
Clearly this happens when 
$$
T_0 := \frac{1}{\lambda_0} \log(\e^{- 1})\,. 
$$
Informally, the goal is then to extend this very simple mechanism for the linearized equation to the nonlinear problem. We shall prove that there exists a time $T \simeq \frac{1}{\lambda_0} \log(\e^{- 1})$, more precisely $ T_0/2 < T < T_0$, and a unique solution of \eqref{Prob Cauchy instab} such that $\| \vphi(0) \|_{H^s} = \e$ and $\| \vphi(T)\|_{H^s} = O(1)$. This ``instability time'' $T$ is strictly smaller than the ``instability time'' $T_0$ given by the linearized problem, an effect of the nonlinear term appearing in the KSE.  We look for such an instability time $T$ as
$$
T \equiv T_\sigma \equiv T_\sigma(\e) = T_0 - \sigma,
$$
where $\sigma \equiv \sigma(s, \omega) \gg 0$ has to be chosen large enough, but independently of $\e$. We observe that given $\sigma > 0$, 
$$
T_0/2 < T_\sigma < T_0
$$
is satisfied provided that 
$$
 0 < \e <  \e_0 := e^{- 2 \lambda_0 \sigma}\,.
$$
We stress that this is the only condition that we require on $\e$ along the whole paper.

\noindent
We construct the solution $\vphi(t, x)$ in such a way that, over the time interval $[0, T_\sigma]$, it holds $\vphi(t, x) \simeq \e e^{\lambda_0 t} \vphi_{in}(x)$. More precisely, we prove the quantitative estimate 
$$
\sup_{t \in [0, T_\sigma]}\| \vphi(t) - \e e^{\lambda_0 t} \vphi_{in}\|_{H^s} \leq C(s, \omega) e^{- 2 \lambda_0 \sigma}\,.
$$
This estimate implies that, for $\sigma \gg 0$ large enough, but independent of $\e$, one has that 
$$
\| \vphi(T_\sigma) \|_{H^s} \geq \delta_0 := \frac12 e^{- \lambda_0 \sigma}\,. 
$$
In order to prove this estimates, the proof is essentially divided into three steps:
\begin{enumerate}
\item{\bf Construction of an approximate solution $\vphi_{app}$ close to the linear solution $\e e^{\lambda_0 t} \vphi_{in}$;}
\item{\bf Local in time existence of a solution close to the approximate solution $\vphi_{app}$;}
\item{\bf Continuation argument up to time $T_\sigma \simeq \frac{1}{\lambda_0} \log(\e^{- 1})$ and instability estimate.}
\end{enumerate}
In the following, we shall describe these three steps. 

\bigskip

\noindent
{\bf Step 1. Construction of the approximate solution $\vphi_{app}$.} In Section \ref{section-approx-solution}, we will construct a ${\mathcal C}^\infty$ approximate solution at any order of the Cauchy problem \eqref{Prob Cauchy instab} as a regular expansion in power of  $\e$. That is,  we look for 
$$
\vphi_{app}(t, x) = \sum_{n = 1}^N \e^n \vphi_n(t, x),
$$
where $N\in \mathbb{N}$ is arbitrary.
By inserting this ansatz into the equation, one finds that $\vphi_1, \ldots, \vphi_N$ can be recursively determined by solving linear equations. Actually $\e \vphi_1$ is the solution of the linearized problem with initial data $\e \vphi_{in}$, whereas for $n \geq 2$, $\vphi_n(t, x)$ solves the linear inhomogeneous Cauchy problem \eqref{eq induttiva per vphi n}. The key point is to quantify the exponential growth in time of all the $\vphi_n$, $n = 2, \ldots, N$ with sharp bounds.  We shall prove that 
$$
\| \vphi_n(t) \|_{H^s} \lesssim_{s, n, \omega} e^{n \lambda_0 t}, \quad \forall t \geq 0, \quad \forall s \geq 0, \quad n = 1, \ldots, N.
$$
We exploit the following property of the  quadratic nonlinearity: if $u$ is a function supported on the Fourier modes $|\xi| \leq N$ and $v$ is supported on the Fourier modes $|\xi| \leq M$, then the nonlinearity ${\mathcal Q}(u, v)$ is supported on the Fourier modes $|\xi| \leq N + M$. The estimated above is proved in Lemma \ref{stime vphi n}, by performing an induction argument  of the Fourier coefficients of $\vphi_n$, by using variation of the constants and using that $\lambda_0$ is the maximum positive eigenvalue of the linear operator ${\mathcal L}_\omega$.  The  error term $r_N(t, x)$ is then bounded in Lemma \ref{lemma stima rN}, where it is proved that 
$$
\| r_N(t) \|_{H^s} \lesssim_{s, N, \omega} \e^{N + 1} e^{(N + 1) \lambda_0 t}, \quad \forall t \in [0, T_\sigma], \quad \forall s \geq 0\,.
$$
In deriving these estimates,  the elementary fact that, since $\e e^{\lambda_0 T_0} = 1$,  for any $0 \leq t \leq T_\sigma = T_0 - \sigma$
$$
\e e^{\lambda_0 t} \leq e^{- \lambda_0 \sigma} \ll 1 \quad \text{for} \quad \sigma \gg 0 \quad \text{large enough,}
$$
is used repeatedly.
\bigskip

\noindent
{\bf Step 2. Local in time existence close to $\vphi_{app}$.} In Section \ref{section-local-existence}, we will prove  existence up to $O(1)$ times  of solutions $\vphi(t, x) \simeq \vphi_{app}(t, x)$. To this end, we write 
$$
\vphi = \vphi_{app} + u
$$
and we solve the Cauchy problem \eqref{PDE app sol u}, that is, 
$$
\begin{cases}
\partial_t u = {\mathcal L}_\omega u + 2 {\mathcal Q}(\vphi_{app}, u) + {\mathcal Q}(u, u) + r_N \\
u(0, x) = 0\,. 
\end{cases}
$$
In Proposition \ref{teo esistenza locale}, we prove existence of smooth solutions up to time $T=O(1)$ by a fixed point argument in a weighted space that keeps track of the time exponential growth due to the maximal unstable eigenvalue $\lambda_0$. More precisely, we employ the following norm (see \eqref{palla punto fisso}):
$$
\| u \|_{s, T} := \sup_{t \in [0, T]} e^{- N \lambda_0 t} \| u(t) \|_{H^s}.
$$
The local solution $u$ then will satisfy
\begin{equation}\label{stima locale u intro}
\| u(t) \|_{H^s} \leq \e^N e^{N \lambda_0 t}, \quad \forall t \in [0, T]\,. 
\end{equation}
The solution is a ``mild'' solution, that is, a fixed point of the map $\Phi$ given in \eqref{def mappa Phi punto fisso}. 
As customary for semilinear parabolic equation, control of the integral term that contains the nonlinearity is obtained through smoothing estimates for  the semigroup $e^{t {\mathcal L}_\omega}$, $t>0$, specifically that that gives a gain of one derivative (see Lemma \ref{lemma stime semigruppo libero}). This bound degenerates at the origin as for the biharmonic operator, that is, $\sim \frac{1}{t^{\frac14}}$.   In order to estimate all the terms appearing in the Duhamel formula with respect to the norm $\| \cdot \|_{s, T}$,  we establish quantitative bounds involving the error term $r_N$ and the nonlinearity  in Lemmata \ref{stima propagatore con rN}, \ref{lemma stime parte quadratica}. Finally, the fixed point argument on $\Phi$ is implemented in Lemma \ref{lemma finale punto fisso Phi}. 

\bigskip

\noindent
{\bf Step 3. Continuation argument up to  time $T_\sigma \simeq \frac{1}{\lambda_0} \log(\e^{- 1})$ and instability estimate.}

\noindent
In Section \ref{sezione long time existence}, we discuss how to extend the local solution $u$ of the Cauchy problem \eqref{PDE app sol u}, satisfying \eqref{stima locale u intro} , up to  time $T_\sigma \simeq \frac{1}{\lambda_0} \log(\e^{- 1})$. This  argument is  by contradiction. We call $T_*$ the maximal time for which $u$ is the solution of \eqref{PDE app sol u} satisfying \eqref{stima locale u intro}. We then assume by contradiction that $T_* \leq T_\sigma$ and we perform a sharp energy estimate over the time interval $[0, T_*]$. Since the nonlinearity contains derivatives, we use the theory of para-differential operators in order to avoid a possible loss of derivatives in the energy estimates. We collect all the facts needed on para-differential operators in Appendix \ref{appendice paradiff}. First of all, we paralinearize \eqref{PDE app sol u} and we reduce it to the para-differential problem \eqref{prob cauchy per stima di energia 1}, which is
\begin{equation}\label{paralin problem intro}
\begin{cases}
\partial_t u = {\mathcal L}_\omega u + T_{A} \cdot \nabla u + f_N(t)  \\
u(0, x) = 0
\end{cases}
\end{equation}
where $A(t)$ and $f_N(t)$ (that clearly depend on $u$) satisfy the estimates 
$$
\begin{aligned}
& \| A(t) \|_{H^{s - 1}} \lesssim_s \e e^{\lambda_0 t}  \,, \quad \forall t \in [0, T_*]\\
& \| f_N(t) \|_{H^s} \lesssim_{s, N, \omega} \e^{N + 1} e^{(N + 1) \lambda_0 t}, \qquad \forall t \in [0, T_*]
\end{aligned}
$$
which is true under our hypothesis that $s \geq 4$. The para-product operator $T_a$, given a suffciently regular Sobolev function $a$, is defined in \eqref{def paraproduct}. Then by performing an energy estimate on \eqref{paralin problem intro}, we obtain that 
$$
\frac{d}{d t} \| u(t) \|_{H^s}^2 \leq \lambda_0 \| u(t) \|_{H^s}^2 + C_0 \e^{2 N + 1} e^{(2 N + 1)\lambda_0 t} \,, \quad \forall t \in [0, T_*]
$$
for some constant $C_0 \equiv C_0(s, N, \omega) > 0$ large enough. It follows that
$$
\| u(t) \|_{H^s} \leq C \e^{N + \frac12} e^{(N + \frac12) \lambda_0 t}, \quad \forall t \in [0, T_*]
$$
for some constant $C \equiv C(s, N, \omega) > 0$. Next, using that $C(\e e^{\lambda_0 t})^{\frac12} \leq Ce^{- \frac{\lambda_0 \sigma}{2}}  \leq \frac12$ (for $\sigma \gg 0$ large enough) gives
$$
\| u(t) \|_{H^s} \leq \frac12 \e^N e^{N \lambda_0 t}, \quad \forall t \in [0, T_*]\,.
$$
We now reach  a contradiction, since by the definition of $T_*$ we should have instead:
$$
\sup_{t \in [0, T_*]} e^{- N \lambda_0 t}\| u(t) \|_{H^s} = \e^N\,. 
$$
Therefore $T_* > T_\sigma$ and $\| u(t) \|_{H^s} \leq e^{N \lambda_0 t} \e^N$ for any $t \in [0, T_\sigma]$. Finally, in the last section, Section \ref{sezione finale dim teorema}, we prove that the true solution $\vphi$ is a small perturbation of $\e e^{\lambda_0 t} \vphi_{in}$ over the time interval $[0, T_\sigma]$ and we show that $\| \vphi(T_\sigma) \|_{H^s} \geq \frac12e^{- \lambda_0 \sigma}$, for $\sigma \gg 0$ large enough and independent of $\e$. We can make the choice $N = 2$. This concludes the proof of our main result, Theorem \ref{main theorem statement}. 

\bigskip

\section{Construction of an approximate solution}\label{section-approx-solution}
We look for an approximate solution of the Cauchy problem \eqref{Prob Cauchy instab} of the form
\begin{equation}\label{approx-solution}
\vphi_{app}(t, x) = \sum_{n = 1}^N \e^n \vphi_n(t, x) \quad \text{such that} \quad \vphi_{app}(0, x) = \e \vphi_{in}(x)\,. 
\end{equation}
We set
$$
\vphi_1(t, x) := e^{\lambda_0 t } \vphi_{in}(x)\,.
$$
Clearly 
$$
\begin{cases}
& \partial_t \vphi_1 = {\mathcal L}_\omega \vphi_1, \\
& \vphi_1(0) = \vphi_{in}.
\end{cases}
$$
We will determine $\vphi_2, \vphi_3, \ldots, \vphi_N$ inductively. 
We begin by rewriting equation \eqref{KURAMOTO-RESCALED-VARIABLES} as 
\begin{equation}\label{KURAMOTO-RESCALED-VARIABLES-2}
\partial_t \vphi = {\mathcal L}_\omega \vphi + {\mathcal Q}(\vphi, \vphi),
\end{equation}
where the bilinear form $(\vphi_1, \vphi_2) \mapsto {\mathcal Q}(\vphi_1, \vphi_2)$ is defined by 
\begin{equation}\label{nonlinearity mathcal Q}
{\mathcal Q}(\vphi_1, \vphi_2) :=- \omega^2 \Pi \big( \nabla  \vphi_1 \cdot \nabla \vphi_2 \big)\,. 
\end{equation}
Inserting  the ansatz \eqref{approx-solution} into \eqref{KURAMOTO-RESCALED-VARIABLES-2} gives that
$$
\sum_{n = 1}^N \e^n \partial_t \vphi_n = \sum_{n = 1}^N \e^n {\mathcal L}_\omega \vphi_n + \sum_{n = 2}^N \e^n \sum_{k = 1}^{n - 1} {\mathcal Q}(\vphi_{n - k}\,,\, \vphi_k) + r_N(t, x).
$$
Using the equation satisfied by $\vphi_1$, we can rewrite the above expression as
$$
\sum_{n = 2}^N \e^n \partial_t \vphi_n = \sum_{n = 2}^N \e^n {\mathcal L}_\omega \vphi_n + \sum_{n = 2}^N \e^n \sum_{k = 1}^{n - 1} {\mathcal Q}(\vphi_{n - k}\,,\, \vphi_k) + r_N(t, x)\,,
$$
where the ``error'' $r_N$ is given by 
\begin{equation}\label{errore soluzioni appprossimate}
r_N := \sum_{n = N + 1}^{2 N} \e^n \sum_{k = 1}^{n - 1} {\mathcal Q}(\vphi_{n - k}\,,\, \vphi_k)\,. 
\end{equation}
We then determine recursively $\vphi_2, \vphi_3, \ldots, \vphi_N$ by solving 
\begin{equation}\label{eq per vphi 2}
\partial_t \vphi_2 = {\mathcal L}_\omega \vphi_2 + {\mathcal Q}(\vphi_1, \vphi_1), \quad \vphi_2(0, x) = 0
\end{equation}
and, assuming  $\vphi_2, \ldots, \vphi_{n - 1}$ given,  by solving 
\begin{equation}\label{eq induttiva per vphi n}
\partial_t \vphi_n = {\mathcal L}_\omega \vphi_n + \sum_{k = 1}^{n - 1} {\mathcal Q}(\vphi_{n - k}\,,\, \vphi_k) \,, \quad \vphi_n(0, x) = 0\,. 
\end{equation}
In order to do precise induction estimates on $\vphi_2, \ldots, \vphi_N$ we prove the following two preliminary lemmata.

\begin{lemma}\label{lemma variazione costanti En}
Let $n \geq 2$. Let  $f: [0, + \infty) \to E_n$, $t\mapsto f(t)$, be a smooth map, where  $f(t)(x) := f(t, x) = \sum_{1 \leq |\xi| \leq n|\xi_0|} \widehat f(t, \xi) e^{\ii x \cdot \xi}$. Assume that there is a constant $C(f) > 0$ such that  
$$
|\widehat f(t, \xi)| \leq C(f) e^{n \lambda_0 t}, \quad \forall t \geq 0, \quad \forall 1 \leq |\xi| \leq n|\xi_0|\,. 
$$
 Then the solution $u(t, x)$ of
\begin{equation}\label{forced system cal L omega}
\begin{cases}
\partial_t u = {\mathcal L}_\omega u + f(t, x)\,, \\
u(0, x) = 0
\end{cases}
\end{equation}
is of the form $u(t, x) = \sum_{1 \leq |\xi| \leq n |\xi_0|} \widehat u(t, \xi) e^{\ii x \cdot \xi}$ and 
$$
|\widehat u(t, \xi)| \leq \frac{2 C(f)}{(n - 1) \lambda_0} e^{\lambda_0 n t}, \quad \forall t \geq 0 \,, \quad \forall 1 \leq |\xi| \leq n |\xi_0|\,. 
$$
\end{lemma}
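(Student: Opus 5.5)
The plan is to work entirely on the Fourier side, where ${\mathcal L}_\omega$ is diagonal, and to use only the variation of constants formula together with the fact that $\lambda_0$ is the maximum of $\sigma({\mathcal L}_\omega)$.

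First, projecting \eqref{forced system cal L omega} onto the mode $e^{\ii x\cdot\xi}$ and using \eqref{autovalori 1}, each coefficient solves the scalar ODE $\partial_t \widehat u(t,\xi) = \lambda_\omega(\xi)\widehat u(t,\xi) + \widehat f(t,\xi)$ with $\widehat u(0,\xi)=0$. For $|\xi|>n|\xi_0|$ (or $\xi = 0$) the forcing vanishes, so uniqueness for linear ODEs gives $\widehat u(t,\xi)\equiv 0$. This shows that $u(t)\in E_n$ for all $t\geq 0$, which is the first claim. For $1\le |\xi|\le n|\xi_0|$, Duhamel's formula gives
$$
\widehat u(t,\xi)=\int_0^t e^{\lambda_\omega(\xi)(t-\tau)}\widehat f(\tau,\xi)\,d\tau .
$$

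Second, I bound this integral. By \eqref{def lambda 0}, $\lambda_\omega(\xi)\le \lambda_0$ for every $\xi\neq 0$, so $e^{\lambda_\omega(\xi)(t-\tau)}\le e^{\lambda_0(t-\tau)}$ since $t-\tau\ge 0$. Combining this with the hypothesis on $\widehat f$,
$$
|\widehat u(t,\xi)|\le C(f)\int_0^t e^{\lambda_0(t-\tau)}e^{n\lambda_0\tau}\,d\tau
= C(f)e^{\lambda_0 t}\,\frac{e^{(n-1)\lambda_0 t}-1}{(n-1)\lambda_0}\le \frac{C(f)}{(n-1)\lambda_0}e^{n\lambda_0 t}.
$$
Here $n\ge 2$ and $\lambda_0>0$ are used so that $(n-1)\lambda_0>0$. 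This is already stronger than the claimed bound by a factor $2$, so the stated constant $\frac{2C(f)}{(n-1)\lambda_0}$ follows immediately.

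There is no real obstacle. The only point needing care is the uniform comparison $\lambda_\omega(\xi)\le\lambda_0$, which covers negative and zero eigenvalues as well as positive ones. I will not split into cases according to the sign of $\lambda_\omega(\xi)$, since the crude bound suffices. The gap $n\lambda_0 - \lambda_0 = (n-1)\lambda_0 > 0$ between the forcing growth rate and the maximal linear rate is exactly what makes the time integral converge to a bound of order $e^{n\lambda_0 t}$ with no secular factor. This explains why the lemma requires $n\ge2$.
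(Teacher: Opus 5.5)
Your proof is correct and follows essentially the same route as the paper: Duhamel's formula mode by mode, combined with the uniform bound $\lambda_\omega(\xi)\le\lambda_0$. The only difference is that you replace $\lambda_\omega(\xi)$ by $\lambda_0$ in the kernel before integrating. The paper instead integrates $e^{(n\lambda_0-\lambda_\omega(\xi))\tau}$ exactly and then bounds $e^{n\lambda_0 t}+e^{\lambda_\omega(\xi)t}\le 2e^{n\lambda_0 t}$, which is where its factor $2$ comes from.
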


\begin{proof}
Since the operator ${\mathcal L}_\omega$ is a Fourier multiplier and  $f(t, \cdot) \in E_n$, we look for $u(t, \cdot ) \in E_n$ solving \eqref{forced system cal L omega}\,. Therefore, for any $1 \leq |\xi| \leq n$  we  let $\widehat u(t, \xi)$ solve
$$
\begin{cases}
\partial_t \widehat u(t, \xi) = \lambda_\omega(\xi) \widehat u(t, \xi) + \widehat f(t, \xi), \\
\widehat u(0, \xi) = 0\,.
\end{cases}
$$
By the variation of constants, we have that
$$
\widehat u(t, \xi) = \int_0^t e^{\lambda_\omega(\xi)(t - \tau)} \widehat f(\tau, \xi)\, d \tau.
$$
It follows that
$$
\begin{aligned}
|\widehat u(t, \xi)| & \leq \int_0^t  e^{\lambda_\omega(\xi)(t - \tau)} |\widehat f(\tau, \xi)|\, d \tau  \leq C(f) \int_0^t  e^{\lambda_\omega(\xi)(t - \tau)} e^{\lambda_0 n \tau}\, d \tau \\
&  \leq C(f) e^{\lambda_\omega(\xi) t } \int_0^t e^{(n \lambda_0 - \lambda_\omega(\xi)) \tau}\, d \tau \\
& \leq \frac{C(f)}{n \lambda_0 - \lambda_\omega(\xi)} e^{\lambda_\omega(\xi) t }  \Big( e^{(n \lambda_0 - \lambda_\omega(\xi)) t} - 1\Big) \\
& \leq \frac{C(f)}{n \lambda_0 - \lambda_\omega(\xi)}  \Big( e^{n \lambda_0 t} + e^{\lambda_\omega(\xi) t } \Big) \,.
\end{aligned}
$$
We observe that  $\lambda_\omega(\xi) \leq \lambda_0$ for any $\xi \in \Z^2 \setminus \{ 0 \}$ by \eqref{def lambda 0}. This fact implies
$$
n \lambda_0 - \lambda_\omega(\xi) \geq (n - 1) \lambda_0 \quad \text{and} \quad e^{\lambda_\omega(\xi) t } \leq e^{\lambda_0 t} \leq e^{n \lambda_0 t},
$$
given that $n\geq 2$. Consequently, the following bound holds:
$$
|\widehat u(t, \xi)| \leq \frac{2 C(f)}{(n - 1) \lambda_0} e^{n \lambda_0 t}\,. 
$$
\end{proof}

\begin{lemma}\label{lemma astratto stima forcing}
Let $m, n \geq 1$ be two integers. Assume that  $\vphi_n:[0, + \infty) \to E_n$, $t\mapsto \vphi(t)$,  $\vphi_m: [0, + \infty) \to E_m$, $t\mapsto \phi_m(t)$, and  assume that 
$$
\begin{aligned}
& |\widehat \vphi_n(t, \xi)| \leq C(\vphi_n) \frac{e^{n \lambda_0 t}}{\lambda_0^{n - 1}}, \quad \forall t \geq 0\,, \quad \forall 1 \leq |\xi| \leq n|\xi_0|\,, \\
& |\widehat \vphi_m(t, \xi)| \leq C(\vphi_m) \frac{e^{m \lambda_0 t}}{\lambda_0^{m - 1}}, \quad \forall t \geq 0\,, \quad \forall 1 \leq |\xi| \leq m|\xi_0|\
\end{aligned}
$$
for some constants $C(\vphi_n), C(\vphi_m) >0$. Then $f := {\mathcal Q}(\vphi_n, \vphi_m) \in E_{n + m}$ satisfies 
$$
|\widehat f(t, \xi)| \lesssim_{n, m} C(\vphi_n) C(\vphi_m) \frac{e^{\lambda_0 (n + m) t}}{\lambda_0^{n + m - 2}}\,, \quad \forall t \geq 0, \quad \forall 1 \leq |\xi| \leq (n + m) |\xi_0|\,. 
$$
\end{lemma}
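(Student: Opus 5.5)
The plan is to compute the Fourier coefficients of $f = {\mathcal Q}(\vphi_n,\vphi_m) = -\omega^2 \Pi(\nabla \vphi_n\cdot\nabla\vphi_m)$ explicitly as a finite convolution sum and bound each term directly. Writing $\vphi_n = \sum_{1\le|\eta|\le n|\xi_0|}\widehat\vphi_n(t,\eta)e^{\ii x\cdot\eta}$ and similarly for $\vphi_m$, one has
$$
\nabla\vphi_n\cdot\nabla\vphi_m = -\sum_{\eta,\zeta} (\eta\cdot\zeta)\,\widehat\vphi_n(t,\eta)\widehat\vphi_m(t,\zeta)\, e^{\ii x\cdot(\eta+\zeta)},
$$
so that for $\xi\neq 0$
$$
\widehat f(t,\xi) = \omega^2 \sum_{\eta+\zeta=\xi,\ 1\le|\eta|\le n|\xi_0|,\ 1\le|\zeta|\le m|\xi_0|} (\eta\cdot\zeta)\,\widehat\vphi_n(t,\eta)\widehat\vphi_m(t,\zeta),
$$
while $\widehat f(t,0)=0$ because of the projector $\Pi$.

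\textbf{Support.} By the triangle inequality $|\xi|\le|\eta|+|\zeta|\le (n+m)|\xi_0|$. Together with $\widehat f(t,0)=0$ and the fact that nonzero integer vectors satisfy $|\xi|\ge1$, this gives $f(t)\in E_{n+m}$.

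\textbf{Coefficient bound.} Each summand satisfies
$$
|\eta\cdot\zeta|\le nm|\xi_0|^2,
$$
and, by hypothesis, $|\widehat\vphi_n\widehat\vphi_m|\le C(\vphi_n)C(\vphi_m)\,e^{(n+m)\lambda_0 t}\lambda_0^{-(n+m-2)}$. The number of admissible pairs $(\eta,\zeta)$ is at most the number of lattice points $\eta$ with $|\eta|\le n|\xi_0|$, which is bounded by a constant depending on $n$ and $|\xi_0|$. Since $0<\omega<1$ and $|\xi_0|<1/\omega$, the quantities $\omega^2 nm|\xi_0|^2$ and the lattice count are bounded in terms of $n,m$ and $\omega$.

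The only point requiring care is this bookkeeping of constants. The implicit constant in $\lesssim_{n,m}$ really depends on $|\xi_0|$, which is determined by $\omega$; this is harmless, since $\omega$ is fixed throughout. In fact $\omega^2|\xi_0|^2<1$, so the prefactor from the multiplier is at most $nm$, and only the lattice count retains a dependence on $\omega$ through $|\xi_0|$. Multiplying the three bounds then yields the claimed estimate.
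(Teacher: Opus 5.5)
Your proposal is correct and follows essentially the same route as the paper. Both arguments expand $f$ bilinearly in Fourier modes using ${\mathcal Q}(e^{\ii x\cdot\eta},e^{\ii x\cdot\zeta})=\omega^2(\eta\cdot\zeta)\,\Pi(e^{\ii x\cdot(\eta+\zeta)})$, get the support from the triangle inequality, and bound each coefficient by the product of the hypotheses times a finite number of terms; your explicit tracking of how the constant depends on $|\xi_0|$ (hence on $\omega$) is a precision the paper leaves implicit in $\lesssim_{n,m}$.
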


\begin{proof}
By linearity in each entry, $f$ can be expanded as 
\begin{equation}\label{penn state 0}
f = {\mathcal Q}(\vphi_n, \vphi_m) = \sum_{\begin{subarray}{c}
1 \leq |\xi_1| \leq n |\xi_0| \\
1 \leq |\xi_2| \leq m |\xi_0|
\end{subarray}} \widehat \vphi_n(t, \xi_1) \widehat \vphi_m(t, \xi_2) {\mathcal Q}\Big(e^{\ii x \cdot \xi_1}\,,\, e^{\ii x \cdot \xi_2}\Big)\,. 
\end{equation}
Next, we observe that for any $\xi_1, \xi_2 \in \Z^2 \setminus \{ 0 \}$, 
$$
\begin{aligned}
{\mathcal Q}\Big( e^{\ii x \cdot \xi_1}\,,\, e^{\ii x \cdot \xi_2} \Big) & =-  \omega^2 \Pi \Big( \nabla (e^{\ii x \cdot \xi_1}) \cdot \nabla(e^{\ii x \cdot \xi_2}) \Big)  =  \omega^2 \xi_1 \cdot \xi_2 \Pi \Big( e^{\ii x \cdot (\xi_1 + \xi_2)}  \Big)\,. 
\end{aligned}
$$
It then follows from \eqref{penn state 0} that
$$
f(t, x) = \sum_{1 \leq |\xi| \leq (n + m) |\xi_0|} \widehat f(t, \xi) e^{\ii x \cdot \xi},
$$
where for any $1 \leq |\xi| \leq (n + m) |\xi_0|$, 
$$
\widehat f(t, \xi) := \sum_{\begin{subarray}{c}
\xi_1 + \xi_2 = \xi \\
|\xi_1| \leq n|\xi_0|\,, \\
|\xi_2| \leq m|\xi_0|
\end{subarray}} C(\xi_1, \xi_2) \widehat \vphi_n(t, \xi_1) \widehat \vphi_m(t, \xi_2)
$$
for some constant $C(\xi_1, \xi_2) \in \C$\,. Hence,
$$
\begin{aligned}
|\widehat f(t, \xi)| & \lesssim_{m, n} \sum_{\begin{subarray}{c}
\xi_1 + \xi_2 = \xi \\
|\xi_1| \leq n|\xi_0|\,, \\
|\xi_2| \leq m|\xi_0|
\end{subarray}} |\widehat \vphi_n(t, \xi_1)| |\widehat \vphi_m(t, \xi_2)| \\
& \lesssim_{m, n} C(\vphi_n) C(\vphi_m) \frac{e^{n \lambda_0 t}}{\lambda_0^{n - 1}} \frac{e^{m \lambda_0 t}}{\lambda_0^{m - 1}},
\end{aligned}
$$
from which the thesis follows. 
\end{proof}

\begin{lemma}\label{stime vphi n}
Let $n = 2, \ldots , N$.  There exists a unique  $\vphi_n(t, x) = \sum_{1 \leq |\xi| \leq n|\xi_0|} \widehat \vphi_n(t, \xi) e^{\ii x \cdot \xi}$ satisfying \eqref{eq per vphi 2}, \eqref{eq induttiva per vphi n}. 
Furthermore,
the following estimate holds for some constant $C_n > 0$,
$$
|\widehat \vphi_n(t, \xi)| \leq C_n \frac{e^{n \lambda_0 t}}{\lambda_0^{n - 1}}, \quad \forall t \geq 0, \quad 1 \leq |\xi| \leq n|\xi_0|\,,
$$
and for any $s \geq 0$, 
$$
\| \vphi_n(t, \cdot) \|_{H^s} \lesssim_{s, n}  \frac{e^{n \lambda_0 t}}{\lambda_0^{n - 1}}, \quad \forall t \geq 0\,. 
$$
\end{lemma}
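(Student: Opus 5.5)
The plan is a strong induction on $n$ that uses Lemma \ref{lemma astratto stima forcing} and Lemma \ref{lemma variazione costanti En} alternately. For the base case we take $\vphi_1(t) = e^{\lambda_0 t}\vphi_{in}$. Since $\vphi_{in}\in E(\lambda_0)$, it is supported on $|\xi| = |\xi_0|$, and each coefficient satisfies $|\widehat\vphi_1(t,\xi)| \le \|\vphi_{in}\|_{H^s} e^{\lambda_0 t} \le e^{\lambda_0 t}$, using $\langle \xi\rangle \ge 1$. So $\vphi_1$ takes values in $E_1$ and satisfies the coefficient bound with $C_1 = 1$ and $\lambda_0^{0}=1$.

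For the inductive step, fix $n \ge 2$ and assume that $\vphi_1,\dots,\vphi_{n-1}$ are constructed with $\vphi_k$ valued in $E_k$ and $|\widehat\vphi_k(t,\xi)| \le C_k e^{k\lambda_0 t}/\lambda_0^{k-1}$.

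First, each term ${\mathcal Q}(\vphi_{n-k},\vphi_k)$ with $1\le k\le n-1$ is handled by Lemma \ref{lemma astratto stima forcing}. It lies in $E_n$, and its coefficients are bounded by $\lesssim_n C_{n-k}C_k\, e^{n\lambda_0 t}/\lambda_0^{n-2}$. Summing over $k$ gives a smooth forcing $f_n$ valued in the finite dimensional space $E_n$, with $|\widehat f_n(t,\xi)| \le C(f_n) e^{n\lambda_0 t}$, where $C(f_n) \lesssim_n \lambda_0^{-(n-2)}\sum_k C_{n-k}C_k$. Smoothness in $t$ holds because the problem reduces to finitely many ODEs for the Fourier coefficients, and the previous iterates are explicit smooth functions of $t$.

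Second, we apply Lemma \ref{lemma variazione costanti En} to $\partial_t \vphi_n = {\mathcal L}_\omega\vphi_n + f_n$ with $\vphi_n(0)=0$. This produces a solution $\vphi_n$ valued in $E_n$ with
$$|\widehat\vphi_n(t,\xi)| \le \frac{2C(f_n)}{(n-1)\lambda_0}e^{n\lambda_0 t} \le C_n\frac{e^{n\lambda_0 t}}{\lambda_0^{n-1}}.$$
This is exactly where the extra factor $\lambda_0^{-1}$ enters. The closure rests on the gap $n\lambda_0-\lambda_\omega(\xi)\ge (n-1)\lambda_0$, which holds because $\lambda_0$ is the maximal eigenvalue.

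Uniqueness needs a separate remark, since the lemma only constructs a solution. We will argue it in the class $\mathcal C^0([0,\infty),H^s)$ mode by mode: each Fourier coefficient solves a scalar linear ODE with zero initial datum, so the difference of two solutions vanishes identically. This also shows that every mode with $|\xi| > n|\xi_0|$ stays zero, so the solution is genuinely supported in $E_n$.

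Finally, the Sobolev bound follows from the finite support. We have
$$\|\vphi_n(t)\|_{H^s}^2 = \sum_{1\le|\xi|\le n|\xi_0|}\langle\xi\rangle^{2s}|\widehat\vphi_n(t,\xi)|^2 \le \#\{|\xi|\le n|\xi_0|\}\,\langle n\xi_0\rangle^{2s}\, C_n^2\frac{e^{2n\lambda_0 t}}{\lambda_0^{2(n-1)}}.$$
Since $\xi_0$ depends only on $\omega$, the prefactor depends on $s$, $n$ and $\omega$.

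The only delicate point is bookkeeping the powers of $\lambda_0$ so that the forcing bound in Lemma \ref{lemma variazione costanti En} is in the form required there. We handle it by absorbing $\lambda_0^{-(n-2)}$ into $C(f_n)$, which is harmless because $\lambda_0$ is fixed. The remaining constants depend only on $n$ and $\omega$, through $\xi_0$ and the coefficients $C(\xi_1,\xi_2)=\omega^2\xi_1\cdot\xi_2$.
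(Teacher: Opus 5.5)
Your proposal is correct and follows essentially the same route as the paper: induction on $n$, using Lemma \ref{lemma astratto stima forcing} to get $|\widehat f_n(t,\xi)| \lesssim_n e^{n\lambda_0 t}/\lambda_0^{n-2}$, then Lemma \ref{lemma variazione costanti En} with $C(f_n)\sim\lambda_0^{-(n-2)}$, which supplies the extra factor $\lambda_0^{-1}$ through the gap $n\lambda_0-\lambda_\omega(\xi)\ge (n-1)\lambda_0$. Your additions only make explicit what the paper leaves implicit: starting the induction at $n=1$ so that $\vphi_1$ is formally covered, the mode-by-mode uniqueness argument, the $H^s$ bound from the finite Fourier support, and the $\omega$-dependence of the constants through $\xi_0$.
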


\begin{proof}
We shall prove the lemma by induction on $n \in \{ 2, \ldots, N\}$. 

\medskip

\noindent
{\sc The basis of induction $ n = 2$.} We recall that 
$$
\vphi_1(t, x) = e^{\lambda_0 t} \vphi_{in} (x) , \quad \vphi_{in}(x) = \sum_{|\xi| = |\xi_0|} \widehat \vphi_{in}(\xi) e^{\ii x \cdot \xi}\,. 
$$ 
We seek a solution of the problem:
$$
\begin{cases}
\partial_t \vphi_2 = {\mathcal L}_\omega \vphi_2 + {\mathcal Q}(\vphi_1, \vphi_1)\,, \\
\vphi_2(0, x) = 0.
\end{cases}
$$
We let $f_1 := {\mathcal Q}(\vphi_1, \vphi_1)$. By Lemma \ref{lemma astratto stima forcing}, one has that 
$$
f_1(t, x) = \sum_{1 \leq |\xi| \leq 2|\xi_0|} \widehat f_1(t, \xi) e^{\ii x \cdot \xi},
$$
and
$$
|\widehat f_1(t, \xi) | \lesssim e^{2 \lambda_0 t}, \quad \forall t \geq 0, \quad \forall 1 \leq |\xi| \leq 2|\xi_0|\,. 
$$
Then  by Lemma \ref{lemma variazione costanti En}, we have that 
$$
\vphi_2(t, x) = \sum_{1 \leq |\xi| \leq 2|\xi_0|} \widehat \vphi_2(t, \xi) e^{\ii x \cdot \xi},
$$
with
$$
|\widehat \vphi_2(t, \xi)| \lesssim \frac{e^{2 \lambda_0 t}}{\lambda_0}, \quad \forall t \geq 0, \quad \forall 1 \leq |\xi| \leq 2|\xi_0|,
$$
which is exactly the claim for $n = 2$. 

\medskip

\noindent
{\sc The Induction step.} We now assume that for any $k \in \{ 2, \ldots, n - 1 \}$ with $n \leq N$, the statement of the Lemma holds for $\vphi_2, \vphi_3, \ldots, \vphi_{n - 1}$. We then prove it for $\vphi_{n}$.  From \eqref{eq induttiva per vphi n}, $\vphi_n$ must solve
$$
\begin{cases}
\partial_t \vphi_n = {\mathcal L}_\omega \vphi_n + f_n  \,, \\
 \vphi_n(0, x) = 0,
\end{cases}
$$
where 
$$
f_n := \sum_{k = 1}^{n - 1} {\mathcal Q}(\vphi_{n - k}\,,\, \vphi_k)\,. 
$$
By the induction hypothesis for any $k \in \{ 2, \ldots, n - 1 \}$, one has that 
$$
\begin{aligned}
& \vphi_k(t, \cdot) \in E_k, \quad \forall t \geq 0 \quad \text{and} \\
& |\widehat \vphi_k(t, \xi)| \leq C_k  \frac{e^{k \lambda_0 t}}{\lambda_0^{k - 1}}, \quad \forall t \geq 0, \quad \forall 1 \leq |\xi| \leq k|\xi_0|\,. 
\end{aligned}
$$
Therefore, by applying Lemma \ref{lemma astratto stima forcing}, $f_n(t, \cdot) \in E_n$ for any $t \geq 0$ and 
$$
|\widehat f_n(t, \xi)| \leq C_1(n) \frac{e^{n \lambda_0 t}}{\lambda_0^{n - 2}}, \quad \forall t \geq 0, \quad \forall 1 \leq |\xi| \leq n|\xi_0|\,,
$$
for some constant $C_1(n) \gg 0$ large enough. The hypotheses of Lemma \ref{lemma variazione costanti En} are then verified with 
$$
C(f_n) := \frac{C_1(n)}{\lambda_0^{n - 2}}\,.
$$
We can conclude from this lemma that $\vphi_n(t, \cdot) \in E_n$ satisfies \eqref{eq induttiva per vphi n}, and for any $t \geq 0$, $1 \leq |\xi| \leq n|\xi_0|$,  the following estimate holds:
$$
\begin{aligned}
|\widehat \vphi_n(t, \xi)| & \lesssim_n C(f_n) \frac{e^{n \lambda_0 t}}{\lambda_0} \lesssim_n \frac{e^{n \lambda_0 t}}{\lambda_0^{n - 1}}\,.
\end{aligned}
$$
We conclude that $\vphi_n$ has the required properties and the induction argument is established.
\end{proof}

In order to provide an estimate of the error term $r_N$ in \eqref{errore soluzioni appprossimate}, it will be useful to fix a few parameters.
For any $s > 1$ we fix the time $T_0 \equiv T_0(\e, s) > 0$ in such a way that 
$$
\e \| \vphi_1(T_0, \cdot) \|_{H^s} = 1\,.
$$
By rescaling, we can assume without loss of generality that  $\| \vphi_{in}\|_{H^s} = 1$.
Consequently, we choose $T_0$ so that
\begin{subequations} \label{def tempo T0 T sigma}
\begin{equation}
  \e e^{\lambda_0 T_0} = 1, \quad \text{or equivalently} \quad T_0 := \frac{1}{\lambda_0} \log(\e^{- 1})\,.
\end{equation}
We also choose an intermediate time $T_\sigma$ with
\begin{equation}
 T_\sigma := T_0 - \sigma \quad \text{where} \quad \sigma \gg 0 \quad \text{has to be determined, independently of}\,\,\e\,. 
\end{equation}
\end{subequations}
In order to ensure $T_\sigma \gg 0$ large enough, one has to  assume $T_0 \gg \sigma$. More precisely, we require that 
\begin{equation}\label{T0 T0 2}
T_0 > T_\sigma = T_0 - \sigma >\frac{T_0}{2}
\end{equation}
Therefore, if we choose $\varepsilon$ to satisfy 
\begin{equation}\label{smallness epsilon sigma}
0 < \e < e^{- 2 \lambda_0 \sigma}\,
\end{equation}
then \eqref{T0 T0 2} will be 
satisfied.
This is actually the only condition on $\e$ appearing in the paper. 
We now provide an estimate of the approximate solution $\vphi_{app}.$

\begin{lemma}[\bf Estimate of  $\vphi_{app}$ in \eqref{approx-solution}]\label{lemma stime vphi app}
Let $s > 1$, $N \geq 2$, and let $\vphi_{app}$ be as in \eqref{approx-solution}. There exists a constant $\sigma_0(s, N, \omega) > 0$ such that, if $\sigma \geq \sigma_0(s, N, \omega)$, then the following estimates hold for any $t \in [0, T_\sigma]$:
\begin{equation} \label{eq:lemma stime vphi app.1}
\begin{aligned}
& \| \vphi_{app}(t) - \e \vphi_1(t) \|_{H^s} \leq C(s, N, \omega) \e^2 e^{2 \lambda_0 t}\,, \\
& \| \vphi_{app}(t)  \|_{H^s} \leq 2 \e e^{\lambda_0 t}\,.
\end{aligned}
\end{equation}
Furthermore,
\begin{equation} \label{eq:lemma stime vphi app.2}
\sup_{t \in [0, T_\sigma]} \| \vphi_{app}(t) - \e \vphi_1(t) \|_{H^s} \leq C(s, N, \omega)  e^{- 2 \lambda_0 \sigma}\,. 
\end{equation}
\end{lemma}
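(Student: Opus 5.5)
The plan is to use the Fourier-coefficient bounds of Lemma \ref{stime vphi n} together with the smallness $\e e^{\lambda_0 t}\le e^{-\lambda_0\sigma}$ on $[0,T_\sigma]$, which follows from \eqref{def tempo T0 T sigma}.

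First, by definition \eqref{approx-solution},
$$
\vphi_{app}(t)-\e\vphi_1(t)=\sum_{n=2}^N \e^n\vphi_n(t).
$$
Lemma \ref{stime vphi n} gives $\|\vphi_n(t)\|_{H^s}\le C(s,n,\omega)\,e^{n\lambda_0 t}$; here the factor $\lambda_0^{-(n-1)}$ is absorbed into the constant, since $\lambda_0$ depends only on $\omega$. This bound holds for all $s\ge 0$, because $\vphi_n(t)\in E_n$ is a trigonometric polynomial with at most $O_n(1)$ modes, each of frequency $\le n|\xi_0|$. Hence
$$
\|\vphi_{app}(t)-\e\vphi_1(t)\|_{H^s}\le \sum_{n=2}^N C(s,n,\omega)\,(\e e^{\lambda_0 t})^n .
$$
For $t\le T_\sigma$ we have $\e e^{\lambda_0 t}\le \e e^{\lambda_0 T_0}e^{-\lambda_0\sigma}=e^{-\lambda_0\sigma}\le 1$ once $\sigma\ge0$. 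Therefore $(\e e^{\lambda_0 t})^n\le(\e e^{\lambda_0 t})^2$ for every $n\ge2$, and summing gives the first estimate with $C(s,N,\omega)=\sum_{n=2}^N C(s,n,\omega)$.

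Next, inserting $\e e^{\lambda_0 t}\le e^{-\lambda_0\sigma}$ into the first estimate gives $C e^{-2\lambda_0\sigma}$ uniformly on $[0,T_\sigma]$, which is \eqref{eq:lemma stime vphi app.2}.

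For the bound on $\vphi_{app}$ itself, note that $\|\e\vphi_1(t)\|_{H^s}=\e e^{\lambda_0 t}\|\vphi_{in}\|_{H^s}=\e e^{\lambda_0 t}$ by the normalization. The triangle inequality then gives
$$
\|\vphi_{app}(t)\|_{H^s}\le \e e^{\lambda_0 t}\bigl(1+C\,\e e^{\lambda_0 t}\bigr)\le \e e^{\lambda_0 t}\bigl(1+Ce^{-\lambda_0\sigma}\bigr).
$$
Choosing $\sigma_0$ so that $Ce^{-\lambda_0\sigma_0}\le1$, i.e.\ $\sigma_0=\lambda_0^{-1}\log C(s,N,\omega)$ (or $0$ if $C\le 1$), yields the factor $2$.

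The argument is routine. The only point needing care is that the constants in Lemma \ref{stime vphi n} are uniform in $t$ and in the particular choice of $\vphi_{in}$ in $E(\lambda_0)$. This uniformity holds because the coefficients satisfy $|\widehat\vphi_{in}(\xi)|\le \langle\xi_0\rangle^{-s}$ under the normalization, so $C_1$ in the induction depends only on $s$ and $\omega$, and in turn $C_n$ depends only on $s$, $n$ and $\omega$.
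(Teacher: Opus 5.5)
Your proposal is correct and follows essentially the same route as the paper. You bound $\sum_{n=2}^N \e^n\|\vphi_n(t)\|_{H^s}$ via Lemma \ref{stime vphi n} and use $\e e^{\lambda_0 t}\le e^{-\lambda_0\sigma}$ on $[0,T_\sigma]$ to reduce to the $n=2$ term (the paper uses a geometric series with ratio $<\tfrac12$, you use $(\e e^{\lambda_0 t})^n\le(\e e^{\lambda_0 t})^2$ on a finite sum), then take the triangle inequality with $\sigma\ge\lambda_0^{-1}\log C$ to get the factor $2$. Your remark that the constants are uniform in $\vphi_{in}\in E(\lambda_0)$, since $|\widehat\vphi_{in}(\xi)|\le\langle\xi_0\rangle^{-s}$, is a correct detail the paper leaves implicit.
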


\begin{proof}
For any $t \in [0, T_\sigma]$, one has that 
$$
\begin{aligned}
\| \vphi_{app}(t) - \e \vphi_1(t) \|_{H^s} & \leq \sum_{n = 2}^N \e^n \| \vphi_n(t) \|_{H^s} \leq \sum_{n = 2}^N \e^n  \| \vphi_n(t) \|_{H^s} \\
& \lesssim_{s, N, \omega} \sum_{n = 2}^N \e^n e^{n \lambda_0 t} \lesssim_{s, N, \omega} \e^2 e^{2 \lambda_0 t} \sum_{n \geq 2} (\e e^{\lambda_0 t})^{n - 2}\,.
\end{aligned}
$$
We recall that $\e e^{\lambda_0 T_0} = 1$, that $T_\sigma = T_0 - \sigma$, and that $\sigma$, $e$ must satisfy \eqref{T0 T0 2} and \eqref{smallness epsilon sigma}. Therefore,
since  $\e e^{\lambda_0 t} \leq e^{- \lambda_0 \sigma} < \frac12$ for any $t \in [0, T_\sigma]$, we also have that
$$
\| \vphi_{app}(t) - \e \vphi_1(t) \|_{H^s}  \lesssim_{s, N, \omega} \e^2 e^{2 \lambda_0 t} \sum_{n \geq 2} \frac{1}{2^n} \lesssim_{s, N, \omega} \e^2 e^{2 \lambda_0 t}\,.
$$
Consequently,
$$
\sup_{t \in [0, T_\sigma]} \| \vphi_{app}(t) - \e \vphi_1(t) \|_{H^s} \leq C(s, N, \omega) e^{- 2 \lambda_0 \sigma}
$$
for some constant $C(s, N, \omega) > 0$.  Moreover for any $t \in [0, T_\sigma]$, 
$$
\begin{aligned}
\| \vphi_{app}(t) \|_{H^s} & \leq \e \| \vphi_1(t) \|_{H^s} + \| \vphi_{app}(t) - \e \vphi_1(t) \|_{H^s} \\
& \leq \e e^{\lambda_0 t} + C(s, N, \omega) \e^2 e^{2 \lambda_0 t} \leq  \e e^{\lambda_0 t} \Big( 1 + C(s, N, \omega) \e e^{\lambda_0 t} \Big) \\
& \leq 2 \e e^{\lambda_0 t},
\end{aligned}
$$
given that
$$
\begin{aligned}
C(s, N, \omega) \e e^{\lambda_0 t}  & \leq C(s, N, \omega) \e e^{\lambda_0 T_0} e^{- \lambda_0 \sigma} \\
& \stackrel{\e e^{\lambda_0 T_0} = 1}{\leq}C(s, N, \omega) e^{- \lambda_0 \sigma} \leq 1
\end{aligned}
$$
by taking  $\sigma \geq \frac{1}{\lambda_0} \log\Big(C(s, N, \omega)\Big) $.  The proof of the lemma is then complete
\end{proof}

\begin{lemma}[{\bf Estimate of the error term $r_N$ in \eqref{errore soluzioni appprossimate}}] \label{lemma stima rN}
Let $s > 1, N \geq 2$, and let $r_N$ be given as in \eqref{errore soluzioni appprossimate}. Then there exists a constant $\sigma_0(s, N, \omega) > 0$ such that for any $\sigma \geq \sigma_0(s, N, \omega)$ and for any $t \in [0, T_\sigma]$, 
$$
 \| r_N(t) \|_{H^s} \leq C(s, N, \omega) \e^{N + 1} e^{(N + 1)\lambda_0 t}
$$
for some constant $C(s, N, \omega) > 0$. 
\end{lemma}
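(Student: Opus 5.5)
The plan is to estimate $r_N$ term by term from its explicit formula \eqref{errore soluzioni appprossimate}, combining the Fourier-coefficient bounds of Lemma \ref{stime vphi n} with Lemma \ref{lemma astratto stima forcing}. One point must be handled first. In \eqref{errore soluzioni appprossimate} the inner sum runs over $1\le k\le n-1$ with $n$ up to $2N$, but only $\vphi_1,\dots,\vphi_N$ are defined. So I interpret the sum as running over pairs $(n-k,k)$ with $1\le k,\,n-k\le N$, which is what the expansion of ${\mathcal Q}(\vphi_{app},\vphi_{app})$ actually produces. With this reading,
$$
r_N=\sum_{n=N+1}^{2N}\e^n\sum_{\substack{1\le k\le N\\ 1\le n-k\le N}}{\mathcal Q}(\vphi_{n-k},\vphi_k),
$$
and there are finitely many terms, their number depending only on $N$.

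For each pair $(j,k)$ with $j+k=n$, Lemma \ref{stime vphi n} (and the trivial bound for $\vphi_1=e^{\lambda_0t}\vphi_{in}$, whose Fourier coefficients are bounded by $\|\vphi_{in}\|_{H^s}=1$) gives $\vphi_j(t)\in E_j$ with $|\widehat\vphi_j(t,\xi)|\le C_j e^{j\lambda_0t}/\lambda_0^{j-1}$. Lemma \ref{lemma astratto stima forcing} then shows that $f={\mathcal Q}(\vphi_j,\vphi_k)\in E_n$ and
$$
|\widehat f(t,\xi)|\lesssim_{N} \frac{e^{n\lambda_0 t}}{\lambda_0^{n-2}}\quad\text{for all } 1\le|\xi|\le n|\xi_0|.
$$
Since $f$ has finitely many Fourier modes, all with $|\xi|\le 2N|\xi_0|$, its $H^s$ norm is controlled by
$$
\|f(t)\|_{H^s}\le \Big(\#\{|\xi|\le 2N|\xi_0|\}\Big)^{1/2}\langle 2N|\xi_0|\rangle^{s}\sup_\xi|\widehat f(t,\xi)|\lesssim_{s,N,\omega} e^{n\lambda_0t}.
$$
The powers of $\lambda_0$ and $|\xi_0|$ are absorbed into the constant because both depend only on $\omega$.

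Summing over the pairs gives $\|r_N(t)\|_{H^s}\lesssim_{s,N,\omega}\sum_{n=N+1}^{2N}(\e e^{\lambda_0t})^n$. I then factor out $(\e e^{\lambda_0t})^{N+1}$ and use that for $t\in[0,T_\sigma]$,
$$
\e e^{\lambda_0t}\le \e e^{\lambda_0T_0}e^{-\lambda_0\sigma}=e^{-\lambda_0\sigma}\le\tfrac12
$$
once $\sigma\ge\sigma_0$, with $\sigma_0:=\lambda_0^{-1}\log 2$ for instance. The remaining geometric sum is then bounded by $2$, which yields the claim $\|r_N(t)\|_{H^s}\le C(s,N,\omega)\,\e^{N+1}e^{(N+1)\lambda_0 t}$.

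There is no serious obstacle here. The only delicate points are the correct indexing of the sum defining $r_N$ and keeping track of which constants depend on $\omega$ through $\lambda_0$ and $|\xi_0|$. The smallness of $\e e^{\lambda_0 t}$ on $[0,T_\sigma]$ is used exactly as in the proof of Lemma \ref{lemma stime vphi app}.
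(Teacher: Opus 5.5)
Your proposal is correct and follows the same route as the paper. You bound each $\mathcal Q(\vphi_{n-k},\vphi_k)$ in $H^s$ by a constant times $e^{n\lambda_0 t}$, then factor out $(\e e^{\lambda_0 t})^{N+1}$ and sum using $\e e^{\lambda_0 t}\le e^{-\lambda_0\sigma}\le \frac12$ on $[0,T_\sigma]$; the differences from the paper are minor. The paper bounds each quadratic term via the algebra property $\|\mathcal Q(u,v)\|_{H^s}\lesssim_s\|u\|_{H^{s+1}}\|v\|_{H^{s+1}}$ (this is where $s>1$ enters) together with the $H^{s+1}$ bound of Lemma \ref{stime vphi n}, whereas you use Lemma \ref{lemma astratto stima forcing} plus finite Fourier support, which works for any $s\ge 0$; your restriction of the inner sum to $1\le k,\ n-k\le N$ also correctly makes explicit a convention that the paper's formula for $r_N$ leaves implicit.
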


\begin{proof}
We continue to assume the conditions on $T_0$, $_\sigma$, $\sigma$ and $\e$ as in Lemma \ref{lemma stime vphi app}. 
By \eqref{errore soluzioni appprossimate} for any $t \in [0, T_\sigma]$, 
$$
\begin{aligned}
\| r_N(t) \|_{H^s} & \leq \sum_{n = N + 1}^{2 N} \e^n \sum_{k = 1}^{n - 1} \| {\mathcal Q}(\vphi_{n - k}(t), \vphi_k(t))\|_{H^s} \\
& \lesssim_s \sum_{n = N + 1}^{2 N} \e^n \sum_{k = 1}^{n - 1} \| \vphi_{n - k}(t) \|_{H^{s + 1}}  \| \vphi_k(t))\|_{H^{s + 1}} \\
& \stackrel{Lemma \, \ref{stime vphi n}}{\lesssim_{s, N}} \sum_{n = N + 1}^{2 N} \e^n \sum_{k = 1}^{n - 1}  \frac{e^{n \lambda_0 t}}{\lambda_0^{n - 2}} \lesssim_{s, N} \frac{1}{\lambda_0^{2(N - 1)}}\sum_{n = N + 1}^{2 N} \e^n   e^{n \lambda_0 t} \\
& \lesssim_{s, N, \omega} \sum_{n = N + 1}^{2 N} \e^n   e^{n \lambda_0 t}\,. 
\end{aligned} 
$$
Next we observe that, for any $t \in [0, T_\sigma]$, $n \geq N + 1$,
$$
\begin{aligned}
\e^n e^{n \lambda_0 t} & = \e^{N + 1} e^{(N +1) \lambda_0 t} \e^{n - N - 1} e^{(n - N - 1) \lambda_0 t} \leq \e^{N + 1} e^{(N +1) \lambda_0 t} (\e e^{\lambda_0 T_0} e^{- \lambda_0 \sigma})^{n - N - 1} \\
& \stackrel{\e e^{\lambda_0 T_0} = 1}{\leq} \e^{N + 1} e^{(N +1) \lambda_0 t} e^{- \lambda_0 \sigma (n - N - 1)}\,.
\end{aligned}
$$
Therefore, for any $t \in [0, T_\sigma]$, we have that
$$
\begin{aligned}
\| r_N(t) \|_{H^s} & \stackrel{\eqref{def tempo T0 T sigma}}{\lesssim_{s, N, \omega}} \e^{N + 1} e^{(N + 1) \lambda_0 t} \sum_{n \geq N + 1}  e^{- \sigma \lambda_0(n - N - 1)} \\
& \lesssim_{s, N, \omega} \e^{N + 1} e^{(N + 1) \lambda_0 t}  \sum_{k \geq 0} (e^{- \lambda_0 \sigma})^k \lesssim_{s, N, \omega} \e^{N + 1} e^{(N + 1) \lambda_0 t}
\end{aligned}
$$
since $\sigma \gg 0$ is such that $e^{- \lambda_0 \sigma} < \frac12$. This estimate implies the claim.
\end{proof}


\section{Local in time existence close to the approximate solution}\label{section-local-existence}
We look for a solution by perturbing the approximate solution constructed in the previous section, i.e. 
$$
\vphi = \vphi_{app} + u\,, \quad \text{with} \quad u(0, x) = 0\,. 
$$
Then the perturbation $u$ must satisfy the Cauchy problem 
\begin{equation}\label{PDE app sol u}
\begin{cases}
\partial_t u = {\mathcal L}_\omega u + 2 {\mathcal Q}(\vphi_{app}, u) + {\mathcal Q}(u, u) + r_N, \\
u(0, x) = 0\,. 
\end{cases}
\end{equation}
In this section, we shall prove the following local-in-time existence result. 
We will prove local existence over a time interval $[0, T]$, with $0 < T \ll T_\sigma$. In order to do so, given $T > 0$ we choose $0 < \e \ll 1$ satisfying 
\begin{equation}\label{epsilon local existence}
0 < \e < \e_0(T, \omega), \quad \e_0(T, \omega) := e^{- 2 \lambda_0 T}\,. 
\end{equation}
Then, one has  
\begin{equation}\label{relazione T T0 T sigma}
0 < T < T_0 /2 < T_\sigma < T_0\,.
\end{equation}

\begin{prop}\label{teo esistenza locale}
Let $s \geq 4$, $T \geq 1$, $N \geq 2$ and assume \eqref{epsilon local existence}. Then there exists a constant $\sigma_0(T, s, N, \omega) > 0$ such that if $\sigma \geq \sigma_0(T, s, N, \omega)$, there exists a unique solution 
$$
u \in {\mathcal C}^0 \Big([0, T], H^s_0(\T^2) \Big) \cap {\mathcal C}^1 \Big([0, T], H^{s - 4}_0(\T^2) \Big)
$$
of the Cauchy problem \eqref{PDE app sol u} satisfying the bound 
$$
\| u(t) \|_{H^s} \leq e^{ N \lambda_0 t} \e^N, \quad \forall t \in [0, T]\,.  
$$
\end{prop}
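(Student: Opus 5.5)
We construct $u$ as a fixed point of the Duhamel map
$$
\Phi(u)(t) := \int_0^t e^{(t-\tau){\mathcal L}_\omega}\Big( 2{\mathcal Q}(\vphi_{app}(\tau), u(\tau)) + {\mathcal Q}(u(\tau),u(\tau)) + r_N(\tau)\Big)\, d\tau
$$
on the closed ball
$$
B := \Big\{ u \in {\mathcal C}^0([0,T], H^s_0(\T^2)) : \| u \|_{s,T} := \sup_{t\in[0,T]} e^{-N\lambda_0 t}\| u(t)\|_{H^s} \leq \e^N \Big\}.
$$
Note that $\Pi$ makes every forcing term mean free, so $\Phi$ preserves $H^s_0$. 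We need two ingredients for the semigroup. The first is the trivial bound $\| e^{t{\mathcal L}_\omega} g\|_{H^s} \leq e^{\lambda_0 t}\| g \|_{H^s}$, since $\lambda_\omega(\xi)\le\lambda_0$. The second is a smoothing estimate gaining one derivative,
$$
\| e^{t{\mathcal L}_\omega} g \|_{H^{s}} \lesssim_\omega \big(1 + t^{-1/4}\big) e^{\lambda_0 t}\| g\|_{H^{s-1}},
$$
which follows from $\sup_{\xi} \langle\xi\rangle e^{t(\lambda_\omega(\xi)-\lambda_0)}$: for large $|\xi|$ the term $-\omega^4|\xi|^4 t$ dominates and gives $|\xi| e^{-c t|\xi|^4}\lesssim t^{-1/4}$.

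The estimates run as follows. For the remainder, Lemma \ref{lemma stima rN} and the trivial semigroup bound give
$$
\Big\|\int_0^t e^{(t-\tau){\mathcal L}_\omega} r_N\Big\|_{H^s} \lesssim \int_0^t e^{\lambda_0(t-\tau)}\e^{N+1}e^{(N+1)\lambda_0\tau}\,d\tau \lesssim \e^{N+1} e^{(N+1)\lambda_0 t}.
$$
Multiplying by $e^{-N\lambda_0 t}$ yields $\lesssim \e^N(\e e^{\lambda_0 t}) \le C\e^N e^{-\lambda_0\sigma}$, because $t\le T\le T_\sigma$. For the quadratic terms we use that $H^{s-1}$ is an algebra for $s-1>1$, which gives
$$
\|{\mathcal Q}(v,w)\|_{H^{s-1}} \lesssim \| v\|_{H^s}\| w\|_{H^s}.
$$
Combined with the smoothing estimate and Lemma \ref{lemma stime vphi app} ($\|\vphi_{app}\|_{H^s}\le 2\e e^{\lambda_0\tau}$), the linear-in-$u$ term is bounded by
$$
\int_0^t (1+(t-\tau)^{-1/4}) e^{\lambda_0(t-\tau)}\, \e e^{\lambda_0\tau}\, \e^N e^{N\lambda_0\tau}\, d\tau \lesssim_T \e^N e^{N\lambda_0 t}\,(\e e^{\lambda_0 t}),
$$
where the $T$-dependence comes from $\int_0^T(1+s^{-1/4})ds$. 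In the same way the ${\mathcal Q}(u,u)$ term is $\lesssim_T \e^{2N}e^{2N\lambda_0 t}\lesssim_T \e^N e^{N\lambda_0 t}(\e e^{\lambda_0 t})^N$. In every case the gained factor $\e e^{\lambda_0 t}\le e^{-\lambda_0\sigma}$ can be made small by taking $\sigma\ge\sigma_0(T,s,N,\omega)$. So $\Phi(B)\subset B$. Differences are handled identically: $\Phi(u)-\Phi(v)$ involves ${\mathcal Q}(\vphi_{app},u-v)$ and ${\mathcal Q}(u+v,u-v)$, which give contraction constant $\lesssim_T e^{-\lambda_0\sigma}<1/2$. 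Banach's theorem then provides a unique fixed point in $B$, with the claimed bound $\|u(t)\|_{H^s}\le \e^N e^{N\lambda_0 t}$. Uniqueness in ${\mathcal C}^0H^s$ (not only in $B$) follows by the same difference estimate and a Gronwall argument on short subintervals.

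Regularity comes next. Continuity in time into $H^s$ follows from strong continuity of the semigroup and dominated convergence in the Duhamel integral, using that the $t^{-1/4}$ singularity is integrable. To show $u\in{\mathcal C}^1([0,T],H^{s-4})$ we need the mild solution to be a genuine solution. The forcing $F:=2{\mathcal Q}(\vphi_{app},u)+{\mathcal Q}(u,u)+r_N$ lies in ${\mathcal C}^0H^{s-1}$, and ${\mathcal L}_\omega$ maps $H^s\to H^{s-4}$. The standard argument differentiates the Duhamel formula in $H^{s-4}$, or equivalently checks the equation Fourier mode by mode, giving $\partial_t u={\mathcal L}_\omega u+F\in{\mathcal C}^0H^{s-4}$.

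The main obstacle is the derivative loss in ${\mathcal Q}$. It must be absorbed by the semigroup smoothing while we track the exponential weights carefully enough that each term produces a factor $\e e^{\lambda_0 t}$. That factor is small only because of the standing constraint $t\le T<T_\sigma$, and this is where the dependence of $\sigma_0$ on $T$ enters.
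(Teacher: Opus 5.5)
Your proposal is correct and follows the paper's proof essentially step by step. Both arguments run a Duhamel fixed point in the weighted norm $\| u \|_{s,T}=\sup_{t\in[0,T]} e^{-N\lambda_0 t}\| u(t)\|_{H^s}$, using the one-derivative smoothing of $e^{t{\mathcal L}_\omega}$ with its integrable $t^{-1/4}$ singularity, the bound on $r_N$, the algebra property of $H^{s-1}$, and the smallness $\e e^{\lambda_0 t}\le e^{-\lambda_0\sigma}$ for $t\le T\le T_\sigma$. The differences are minor: you contract on the ball of radius $\e^N$ instead of $R=3C(s,N,\omega)\e^N e^{-\lambda_0\sigma}$, and you spell out the ${\mathcal C}^0H^s\cap{\mathcal C}^1H^{s-4}$ regularity and uniqueness outside the ball, which the paper only asserts in a remark or obtains within the ball.
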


The rest of this section is devoted to the proof of Proposition \ref{teo esistenza locale}.
We seek solutions in so-called ``mild" form, that is, as solutions of the following integral equation:
\begin{equation}\label{eq punto fisso}
u(t) = \int_0^t e^{(t - \tau) {\mathcal L}_\omega} \Big[r_N(\tau) + 2 {\mathcal Q}(\vphi_{app}(\tau), u(\tau)) + {\mathcal Q}(u(\tau), u(\tau))  \Big]\, d \tau \,.
\end{equation}

\begin{rmk}
Because we take initial data in $H^s$, $s \geq 4$, it can be shown that the mild solution is actually a strong solution in ${\mathcal C}^0([0,T],H^s)\cap {\mathcal C}^1([0,T],H^{s-4})$, where by a strong solution we mean that \eqref{KURAMOTO-RESCALED-VARIABLES} is satisfied pointwise in time a.e. in space.
\end{rmk}

For any $T > 0$, we introduce the space  
\begin{equation}\label{palla punto fisso}
\begin{aligned}
& {\mathcal E}_T(s) \equiv {\mathcal E}_T(s, N) := \Big\{ u \in {\mathcal C}^0([0, T], H^s_0(\T^2)) : \| u \|_{s, T} := \sup_{t \in [0, T]} e^{- \lambda_0 N t} \| u(t) \|_{H^s} < + \infty  \Big\}\,. 
\end{aligned}
\end{equation}
We note that, if $u \in {\mathcal E}_T(s, N)$, by definition  the following elementary bound holds:
\begin{equation}\label{norma pesata negative exp}
\| u(t) \|_{H^s} \leq \| u \|_{s, T} e^{N \lambda_0 t} , \qquad \forall t \in [0, T]\,. 
\end{equation}
The operator ${\mathcal L}_\omega$ is  self-adjoint operator and satisfies the hypotheses of the Hille-Yosida Theorem. Therefore,  it generates an exponential semigroup $e^{t {\mathcal L}_\omega}$, $t \geq 0$, on $L^2_0(\mathbb{T}^2)$.
We  provide  some estimates on  $e^{t {\mathcal L}_\omega}$, $t \geq 0$, in the next lemma. 

\begin{lemma}\label{lemma stime semigruppo libero}
Let $\vphi \in H^s_0(\T^2)$, $s \geq0$. Then 
\begin{enumerate}[label=(\roman*),ref=({\em \roman*)}]
\item $\| e^{t {\mathcal L}_\omega} \vphi \|_{H^s} \leq e^{\lambda_0 t} \| \vphi \|_{H^s}\,.$ 
\label{i:lemma stime semigruppo libero.1}
\item Moreover, 
$\| e^{t {\mathcal L}_\omega} \vphi \|_{H^{s + 1}} \lesssim_\omega \Big( e^{\lambda_0 t} + \dfrac{1}{t^{\frac14}} \Big) \| \vphi \|_{H^s}\,.$
\label{i:lemma stime semigruppo libero.2}
\end{enumerate}
\end{lemma}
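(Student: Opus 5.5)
Both estimates follow from the fact that $e^{t{\mathcal L}_\omega}$ is a Fourier multiplier. On mean-free functions it acts by
$$
\widehat{e^{t{\mathcal L}_\omega}\vphi}(\xi) = e^{t\lambda_\omega(\xi)}\widehat\vphi(\xi), \qquad \xi \in \Z^2\setminus\{0\}.
$$
The Sobolev norms are weighted $\ell^2$ norms of the Fourier coefficients, so it suffices to bound the symbols mode by mode.

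For \ref{i:lemma stime semigruppo libero.1}, I use that $\lambda_\omega(\xi)\le \lambda_0$ for every $\xi\neq0$, by the definition \eqref{def lambda 0} of $\lambda_0$ as the maximum of the spectrum. Hence $e^{t\lambda_\omega(\xi)}\le e^{\lambda_0 t}$ for all $t\ge 0$. Plancherel then gives $\|e^{t{\mathcal L}_\omega}\vphi\|_{H^s}\le e^{\lambda_0 t}\|\vphi\|_{H^s}$ directly.

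For \ref{i:lemma stime semigruppo libero.2}, it suffices to show
$$
\sup_{\xi\neq0}\langle\xi\rangle\, e^{t\lambda_\omega(\xi)} \lesssim_\omega e^{\lambda_0 t}+t^{-1/4}.
$$
I split the frequencies into two regions.

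\textbf{Low frequencies, $|\xi|\le \sqrt{2}/\omega$.} Here $\langle\xi\rangle\lesssim_\omega 1$ and $e^{t\lambda_\omega(\xi)}\le e^{\lambda_0 t}$, so the symbol is $\lesssim_\omega e^{\lambda_0 t}$.

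\textbf{High frequencies, $|\xi|>\sqrt2/\omega$.} Here $\omega^2|\xi|^2-1\ge \tfrac12\omega^2|\xi|^2$, so $\lambda_\omega(\xi)\le -\tfrac12\omega^4|\xi|^4$. Using the elementary bound $\sup_{y\ge0} y e^{-y^4}<\infty$ with $y = (t/2)^{1/4}\omega|\xi|$, I get
$$
\langle\xi\rangle e^{-\frac12 t\omega^4|\xi|^4} \lesssim |\xi| e^{-\frac12 t\omega^4|\xi|^4} \lesssim_\omega t^{-1/4}.
$$

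Combining the two regions and applying Plancherel yields \ref{i:lemma stime semigruppo libero.2}. There is no real obstacle here. The only point needing care is where to put the cut-off: it must lie beyond $1/\omega$ so that the quartic term dominates, which gives the $t^{-1/4}$ parabolic smoothing, while on the finitely many low modes the growth is controlled by the maximal eigenvalue $\lambda_0$.
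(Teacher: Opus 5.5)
Your proof is correct and follows the paper's argument: part (i) via $\lambda_\omega(\xi)\le\lambda_0$ and Plancherel, and part (ii) via the same split at $|\xi|=\sqrt{2}/\omega$, using $\lambda_\omega(\xi)\le-\frac{\omega^4}{2}|\xi|^4$ and $\sup_{\xi}|\xi|e^{-\frac{\omega^4}{2}|\xi|^4 t}\lesssim_\omega t^{-1/4}$ on the high modes. The only cosmetic difference is that you bound the multiplier pointwise before applying Plancherel, whereas the paper splits $e^{t\mathcal{L}_\omega}\vphi=u_1+u_2$ and uses the triangle inequality.
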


\begin{proof}
\ {\sc Proof of \ref{i:lemma stime semigruppo libero.1}.} \ 
Since $\lambda_0$ is the maximum (positive) eigenvalue of ${\mathcal L}_\omega$, using that $\lambda_\omega(\xi) \leq \lambda_0 $ for any $\xi \in \Z^2 \setminus \{ 0 \}$ one has that for any $s \geq 0$, $t \geq 0$,  
$$
\begin{aligned}
\| e^{t {\mathcal L}_\omega t}\vphi \|_{H^s} & = \Big( \sum_{\xi \in \Z^2 \setminus \{ 0 \}} e^{ 2 \lambda_\omega(\xi) t} \langle \xi \rangle^{2 s} |\widehat \vphi(\xi)|^2\Big)^{\frac12} \leq e^{ \lambda_0 t} \Big( \sum_{\xi \in \Z^2 \setminus \{ 0 \}}  \langle \xi \rangle^{2 s} |\widehat \vphi(\xi)|^2\Big)^{\frac12} \leq e^{ \lambda_0 t} \| \vphi \|_{H^s}\,.
\end{aligned}
$$


\noindent {\sc Proof of\ref{i:lemma stime semigruppo libero.2}.} \ We  note that, for any $\xi \in \Z^2 \setminus \{ 0 \}$, $|\xi| > \frac{\sqrt{2}}{\omega}$, 
$$
\omega^2 |\xi|^2 - 1 \geq \frac12 \omega^2 |\xi|^2.
$$
Therefore,
\begin{equation}\label{parabolico quarto ordine}
\lambda_\omega(\xi) = - \omega^2 |\xi|^2 (\omega^2|\xi|^2- 1) \leq - \frac{\omega^4}{2} |\xi|^4 \,. 
\end{equation}
We then split 
$$
\begin{aligned}
& e^{t {\mathcal L}_\omega} \vphi (x) = u_1(t, x) + u_2(t, x)\,, \\
& u_1(t, x) := \sum_{1 \leq |\xi| \leq \frac{\sqrt{2}}{\omega} } e^{\lambda_\omega(\xi) t} \widehat \vphi(\xi) e^{\ii x \cdot \xi}\,, \quad u_2(t, x) :=  \sum_{|\xi| > \frac{\sqrt{2}}{\omega}} e^{ \lambda_\omega(\xi) t} \widehat \vphi(\xi) e^{\ii x \cdot \xi} \,.
\end{aligned}
$$
We first estimate $u_1$ in $H^s$. We have
$$
\begin{aligned}
\| u_1 (t, \cdot )\|_{H^s} & = \Big( \sum_{1 \leq |\xi| \leq \frac{\sqrt{2}}{\omega} } e^{2\lambda_\omega(\xi) t} \langle \xi \rangle^{2(s + 1)}|\widehat \vphi(\xi)|^2  \Big)^{\frac12} \\
& \leq e^{\lambda_0 t}\Big( \sum_{1 \leq |\xi| \leq \frac{\sqrt{2}}{\omega} }  \langle \xi \rangle^{2(s + 1)}|\widehat \vphi(\xi)|^2  \Big)^{\frac12} \\
& \lesssim_\omega e^{\lambda_0 t}  \Big( \sum_{1 \leq |\xi| \leq \frac{\sqrt{2}}{\omega} }  \langle \xi \rangle^{2s }|\widehat \vphi(\xi)|^2  \Big)^{\frac12} \lesssim_\omega e^{\lambda_0 t} \| \vphi \|_{H^s}\,,
\end{aligned}
$$
where we have used again that $\lambda_\omega(\xi) \leq \lambda_0$ for any $\xi \in \Z^2 \setminus \{ 0 \}$).
Next we estimate $\| u_2(t, \cdot) \|_{H^{s + 1}}$:
$$
\begin{aligned}
\| u_2(t, \cdot) \|_{H^{s + 1}}  & = \Big( \sum_{|\xi| > \frac{\sqrt{2}}{\omega} } e^{2 t \lambda_\omega (\xi)} |\xi |^{2(s + 1)} |\widehat \vphi(\xi)|^2 \Big)^{\frac12} \\
& \stackrel{\eqref{parabolico quarto ordine}}{\leq} \sup_{|\xi| > \frac{\sqrt{2}}{\omega}} \Big(  | \xi | e^{ - \frac{\omega^4}{2} |\xi|^4 t} \Big) \| \vphi \|_{H^s} \lesssim_\omega \frac{1}{t^{\frac14}} \| \vphi \|_{H
^s}\,. 
\end{aligned}
$$
The claim then follows by combining the estimates on $u_1$ and $u_2$. 
%
%
\end{proof}

The next results concern bounds on the integral terms in \eqref{eq punto fisso}.
First,  we tackle the remainder term.

\begin{lemma}\label{stima propagatore con rN}
For any $t \in [0, T_\sigma]$, the following estimate holds for any $s > 1$
$$
\Big\| \int_0^t e^{(t - \tau) {\mathcal L}_\omega} r_N(\tau)\, d \tau \Big\|_{H^s} \leq C(s, N, \omega) \e^{N + 1} e^{(N + 1) \lambda_0 t}\,. 
$$
As a consequence, 
$$
\sup_{t \in [0, T_\sigma]} e^{- N \lambda_0 t} \Big\| \int_0^t e^{(t - \tau) {\mathcal L}_\omega} r_N(\tau)\, d \tau \Big\|_{H^s} \leq C(s, N, \omega) \e^N e^{- \lambda_0 \sigma}\,. 
$$
\end{lemma}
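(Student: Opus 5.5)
The argument combines the semigroup bound of Lemma \ref{lemma stime semigruppo libero}\ref{i:lemma stime semigruppo libero.1} with the pointwise-in-time bound on $r_N$ from Lemma \ref{lemma stima rN}, followed by a direct integration of the exponentials. We take $\sigma \geq \sigma_0(s,N,\omega)$ as in Lemma \ref{lemma stima rN}, so that its conclusion holds on $[0,T_\sigma]$.

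For $t \in [0,T_\sigma]$, we move the $H^s$ norm inside the integral (Minkowski's inequality for Bochner integrals) and apply part (i) of Lemma \ref{lemma stime semigruppo libero} with time $t-\tau \geq 0$. This gives
$$
\Big\| \int_0^t e^{(t - \tau) {\mathcal L}_\omega} r_N(\tau)\, d \tau \Big\|_{H^s} \leq \int_0^t e^{\lambda_0 (t-\tau)} \| r_N(\tau) \|_{H^s}\, d\tau \leq C(s,N,\omega)\, \e^{N+1} \int_0^t e^{\lambda_0(t-\tau)} e^{(N+1)\lambda_0 \tau}\, d\tau .
$$
The last integral equals $e^{\lambda_0 t}\int_0^t e^{N\lambda_0\tau}\,d\tau \leq \frac{1}{N\lambda_0} e^{(N+1)\lambda_0 t}$. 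Absorbing $1/(N\lambda_0)$ into the constant, which depends on $\omega$ through $\lambda_0$, yields the first estimate.

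The key point is that the forcing grows at the rate $(N+1)\lambda_0$, which is strictly larger than the maximal semigroup rate $\lambda_0$. The Duhamel integral is therefore dominated by its upper endpoint, and no loss is incurred; this is the same mechanism as in Lemma \ref{lemma variazione costanti En}.

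For the consequence, we multiply by $e^{-N\lambda_0 t}$ to get the bound $C\,\e^N (\e e^{\lambda_0 t})$. Since $t \leq T_\sigma = T_0 - \sigma$ and $\e e^{\lambda_0 T_0} = 1$ by \eqref{def tempo T0 T sigma}, we have $\e e^{\lambda_0 t} \leq e^{-\lambda_0 \sigma}$. Taking the supremum over $t \in [0,T_\sigma]$ gives the claim.

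There is no real obstacle here. The only points needing care are that Lemma \ref{lemma stima rN} requires $\sigma$ large, which is inherited, and that the integral is measured in the $H^s$ Bochner sense, which is justified because $r_N$ is smooth in time with values in a finite-dimensional space $E_{2N}$.
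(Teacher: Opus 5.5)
Your proposal is correct and follows the paper's own proof: Minkowski's inequality, the semigroup bound $e^{\lambda_0(t-\tau)}$ from Lemma \ref{lemma stime semigruppo libero}, the estimate on $r_N$ from Lemma \ref{lemma stima rN}, explicit integration of $e^{N\lambda_0\tau}$, and then $\e e^{\lambda_0 t}\le e^{-\lambda_0\sigma}$ for the weighted bound. You also state explicitly that the requirement $\sigma\ge\sigma_0(s,N,\omega)$ is inherited from Lemma \ref{lemma stima rN}, which the paper leaves implicit.
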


\begin{proof}
Using Lemma \ref{lemma stima rN}, one has that for any $t \in [0, T_\sigma]$, 
$$
\begin{aligned}
\Big\| \int_0^t e^{(t - \tau) {\mathcal L}_\omega} [r_N(\tau)]\, d \tau \Big\|_{H^s}  & \leq \int_0^t \Big\| e^{(t - \tau) {\mathcal L}_\omega} [r_N(\tau)] \Big\|_{H^s}\, d \tau \\
& \lesssim \int_0^t e^{\lambda_0 (t - \tau)} \| r_N(\tau) \|_{H^s}\, d \tau \lesssim_{s, N, \omega} e^{\lambda_0 t} \e^{N + 1} \int_0^t e^{- \lambda_0 \tau} e^{ (N + 1)\lambda_0 \tau}\, d \tau \\
& \lesssim_{s, N, \omega} e^{\lambda_0 t} \e^{N + 1} \int_0^t e^{ N \lambda_0 \tau}\, d \tau \lesssim_{s, N, \omega} \e^{N + 1} e^{(N + 1) \lambda_0 t}\,.
\end{aligned}
$$
The second estimate of the lemma  follows using that for any $t \in [0, T_\sigma]$,
$$\e e^{\lambda_0 t} \leq \e e^{\lambda_0 T_\sigma} =  \e e^{\lambda_0 T_0} e^{- \lambda_0 \sigma} = e^{- \lambda_0 \sigma},$$ 
since $\e e^{\lambda_0 T_0} = 1$. 
\end{proof}

In the next lemma, we bound the time-integral of the semigroup applied to 
the non-linearity $\mathcal{Q}$ for various entries.  

\begin{lemma}\label{lemma stime parte quadratica}
Let $s > 2$, $ T \geq 1$, $N \geq 2$. Then there exists a constant $\sigma_0(T, s, N, \omega) > 0$ such that for any $\sigma \geq \sigma_0(T, s, N, \omega)$, for any $\varepsilon$ satisfying  \eqref{smallness epsilon sigma}, \eqref{epsilon local existence} then the following holds. 
\begin{enumerate}[label=(\roman*),ref={\em (\roman*)}]
 \item If $u \in {\mathcal E}_T(s, R)$,  then for any $t \in [0, T]$
$$
\Big\|  \int_{0}^t e^{(t - \tau) {\mathcal L}_\omega}[{\mathcal Q}(\vphi_{app}(\tau), u(\tau))] \, d \tau \Big\|_{s, T} \leq C(s, N, \omega) T^{\frac34} e^{- \lambda_0 \sigma } \| u \|_{s, T}.
$$
\label{i:lemma stime parte quadratica.1}
\item  If $u, v \in {\mathcal E}_T(s, N)$, then 
$$
\Big\| \int_{0}^t e^{(t - \tau) {\mathcal L}_\omega}[{\mathcal Q}(v(\tau), u(\tau))] \, d \tau\Big\|_{s, T} \leq C(s, N, \omega) T^{\frac34} e^{N \lambda_0 T} \| u \|_{s, T} \| v \|_{s, T}.
$$
\label{i:lemma stime parte quadratica.2}
\end{enumerate}
\end{lemma}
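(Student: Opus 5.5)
The plan is to combine the smoothing estimate of Lemma \ref{lemma stime semigruppo libero}\ref{i:lemma stime semigruppo libero.2} with a tame product estimate that loses one derivative. For $s>2$ we have $\nabla\vphi_{app},\nabla u\in H^{s-1}$ with $s-1>1$, so $H^{s-1}(\T^2)$ is an algebra. Since $\Pi$ is bounded, this gives
$$
\|{\mathcal Q}(v,u)\|_{H^{s-1}}\lesssim_s \|v\|_{H^{s}}\|u\|_{H^{s}}.
$$
Applying Lemma \ref{lemma stime semigruppo libero}\ref{i:lemma stime semigruppo libero.2} with $s-1$ in place of $s$, and using $\|\cdot\|_{H^{s-1}}\le\|\cdot\|_{H^{s-1}}$ on $H^{s-1}_0$, we get for $0\le\tau<t\le T$
$$
\big\|e^{(t-\tau){\mathcal L}_\omega}{\mathcal Q}(v(\tau),u(\tau))\big\|_{H^s}\lesssim_{s,\omega}\Big(e^{\lambda_0(t-\tau)}+(t-\tau)^{-\frac14}\Big)\|v(\tau)\|_{H^s}\|u(\tau)\|_{H^s}.
$$
Everything then reduces to integrating this in $\tau$ against the exponential weights of $\|\cdot\|_{s,T}$.

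For item \ref{i:lemma stime parte quadratica.1} (where the space is presumably ${\mathcal E}_T(s,N)$), insert the bound $\|\vphi_{app}(\tau)\|_{H^s}\le 2\e e^{\lambda_0\tau}$ from Lemma \ref{lemma stime vphi app}. This is valid since $T<T_\sigma$ by \eqref{relazione T T0 T sigma}, once $\sigma\ge\sigma_0$. Also insert $\|u(\tau)\|_{H^s}\le\|u\|_{s,T}e^{N\lambda_0\tau}$ from \eqref{norma pesata negative exp}. Multiplying by $e^{-N\lambda_0 t}$, the integrand is bounded by
$$
\e e^{\lambda_0\tau}\Big(e^{\lambda_0(t-\tau)}+(t-\tau)^{-1/4}\Big)e^{-N\lambda_0(t-\tau)}\|u\|_{s,T}.
$$
Since $N\ge1$, the factor $e^{\lambda_0(t-\tau)}e^{-N\lambda_0(t-\tau)}$ is at most $1$, and $e^{-N\lambda_0(t-\tau)}\le1$. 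We bound $\e e^{\lambda_0\tau}\le \e e^{\lambda_0 T_\sigma}=e^{-\lambda_0\sigma}$. The time integral is then bounded by
$$
\int_0^t\big(1+(t-\tau)^{-1/4}\big)\,d\tau\le t+\tfrac43 t^{3/4}.
$$
This is $\lesssim T^{3/4}$ only if $T$ is bounded. To obtain the stated $T^{3/4}$ I would instead keep the decay: bound $(e^{\lambda_0(t-\tau)}+(t-\tau)^{-1/4})e^{-N\lambda_0(t-\tau)}\le e^{-(N-1)\lambda_0(t-\tau)}+(t-\tau)^{-1/4}$. The first term integrates to at most $1/((N-1)\lambda_0)\lesssim_\omega 1\le T^{3/4}$, using $T\ge1$. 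The second gives $\tfrac43 t^{3/4}\le\tfrac43 T^{3/4}$. Taking the sup over $t$ gives \ref{i:lemma stime parte quadratica.1}.

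For item \ref{i:lemma stime parte quadratica.2}, use \eqref{norma pesata negative exp} for both $u$ and $v$. This produces a factor $e^{2N\lambda_0\tau}\|u\|_{s,T}\|v\|_{s,T}$, and after multiplying by $e^{-N\lambda_0 t}$ it leaves
$$
e^{N\lambda_0\tau}\,e^{-N\lambda_0(t-\tau)}\Big(e^{\lambda_0(t-\tau)}+(t-\tau)^{-1/4}\Big).
$$
Bounding $e^{N\lambda_0\tau}\le e^{N\lambda_0T}$ and handling the remaining factor exactly as in \ref{i:lemma stime parte quadratica.1} gives $C(s,N,\omega)T^{3/4}e^{N\lambda_0T}\|u\|_{s,T}\|v\|_{s,T}$.

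Continuity in time of the Duhamel integral into $H^s$, needed to stay in ${\mathcal E}_T$, follows from standard arguments using the integrability of $(t-\tau)^{-1/4}$ and the strong continuity of the semigroup. The main point needing care is the product estimate. It holds since $s-1>1$ in two dimensions. If one wants to avoid the algebra property, one can use the Moser-type bound $\|fg\|_{H^{s-1}}\lesssim\|f\|_{H^{s-1}}\|g\|_{L^\infty}+\|f\|_{L^\infty}\|g\|_{H^{s-1}}$ together with the Sobolev embedding $H^{s-1}\subset L^\infty$. Everything else is routine integration, and the only role of $\sigma_0$ is to make Lemma \ref{lemma stime vphi app} applicable.
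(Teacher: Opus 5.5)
Your proposal is correct and follows the same route as the paper's proof. The paper also combines the $H^{s-1}$ algebra bound for ${\mathcal Q}$, the smoothing estimate of Lemma \ref{lemma stime semigruppo libero}\ref{i:lemma stime semigruppo libero.2} applied at regularity $s-1$, the bounds $\|\vphi_{app}(\tau)\|_{H^s}\le 2\e e^{\lambda_0\tau}$ and \eqref{norma pesata negative exp}, and $\e e^{\lambda_0 t}\le e^{-\lambda_0\sigma}$ on $[0,T]$. Your explicit use of the decay $e^{-(N-1)\lambda_0(t-\tau)}$ together with $T\ge 1$ is what the paper does implicitly when it integrates $e^{N\lambda_0\tau}$ and absorbs $1/(N\lambda_0)$ into $T^{3/4}$, and reading ${\mathcal E}_T(s,R)$ as ${\mathcal E}_T(s,N)$ is the intended interpretation.
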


\begin{proof}
{\sc Proof of \ref{i:lemma stime parte quadratica.1}.}\  We recall that by Lemma \ref{lemma stime vphi app} one has that 
$$
\| \vphi_{app}(t) \|_{H^s} \leq 2 \e e^{\lambda_0 t}, \quad \forall t \in [0, T_\sigma]\,. 
$$
We also use that since $s - 1 > 1$, $H^s_0(\T^2)$  is an algebra and, hence,  for any $\vphi, u \in H^s_0(\T^2)$ one has that 
$$
\| {\mathcal Q}(u, \vphi) \|_{H^{s - 1}} \lesssim_s \| u \|_{H^s} \| \vphi \|_{H^s}\,. 
$$
Then for any $t \in [0, T]$, one also has:
\begin{multline}
 \Big\| \int_{0}^t e^{(t - \tau) {\mathcal L}_\omega}[{\mathcal Q}(\vphi_{app}(\tau), u(\tau))] \, d \tau\Big\|_{H^{s}}    \leq  \int_{0}^t \Big\| e^{(t - \tau) {\mathcal L}_\omega}[{\mathcal Q}(\vphi_{app}(\tau), u(\tau))] \Big\|_{H^{s }} \, d \tau \\
  \stackrel{\text{Lemma}\, \ref{lemma stime semigruppo libero}}{\lesssim_\omega} \int_0^t e^{\lambda_0 (t - \tau)} \| {\mathcal Q}(\vphi_{app}(\tau), u(\tau)) \|_{H^{s - 1}}\, d \tau  
  +  \int_0^t \dfrac{1}{(t - \tau)^{\frac14}}\| {\mathcal Q}(\vphi_{app}(\tau), u(\tau)) \|_{H^{s - 1}}\, d \tau \\
  \lesssim_{\omega, s} \int_0^t e^{\lambda_0 (t - \tau)} \| \vphi_{app}(\tau) \|_{H^s} \| u(\tau) \|_{H^s}\, d \tau \, d \tau
  +  \int_0^t \dfrac{1}{(t - \tau)^{\frac14}} \| \vphi_{app}(\tau) \|_{H^s} \| u(\tau) \|_{H^s} \, d \tau \\
  \lesssim_{\omega, s} \int_0^t e^{\lambda_0 (t - \tau)} \e e^{\lambda_0 \tau} \| u(\tau) \|_{H^s}\, d \tau \, d \tau  
  +  \int_0^t \dfrac{1}{(t - \tau)^{\frac14}}  \e e^{\lambda_0 \tau}\| u(\tau) \|_{H^s} \, d \tau \\
  \lesssim_{\omega, s} \e e^{\lambda_0 t} \int_0^t  \| u(\tau) \|_{H^s}\, d \tau   
   + \e e^{\lambda_0 t} \int_0^t \dfrac{1}{(t - \tau)^{\frac14}}  \| u(\tau) \|_{H^s} \, d \tau \\
  \stackrel{\eqref{norma pesata negative exp}}{\lesssim_{s, \omega}}\e e^{\lambda_0 t} \int_0^t  e^{\lambda_0 N \tau}\, d \tau  \| u \|_{s, T} 
   + \e e^{\lambda_0(N + 1) t} \int_0^t \dfrac{1}{(t - \tau)^{\frac14}}   \, d \tau \| u\|_{s, T} \\
  \lesssim_{s, \omega} \e e^{\lambda_0 t}  e^{\lambda_0 N t}T^{\frac34} \| u \|_{s, T}.
\end{multline}
We observe that since $t \leq T \leq T_0/2 < T_\sigma < T_0$, one has that 
$$
\e e^{\lambda_0 t} \leq e^{- \lambda_0 \sigma}
$$
and hence the claim follows. 

\medskip

\noindent {\sc Proof of \ref{i:lemma stime parte quadratica.2}.}\ 
 For any $t \in [0, T]$, one has that 
\begin{multline}
 \Big\| \int_{0}^t e^{(t - \tau) {\mathcal L}_\omega}[{\mathcal Q}( u(\tau), v(\tau))] \, d \tau\Big\|_{H^{s}}    \leq  \int_{0}^t \Big\| e^{(t - \tau) {\mathcal L}_\omega}[{\mathcal Q}(u(\tau), v(\tau))] \Big\|_{H^{s }} \, d \tau \\
  \stackrel{\text{Lemma}\, \ref{lemma stime semigruppo libero}}{\lesssim_\omega} \int_0^t e^{\lambda_0 (t - \tau)} \| {\mathcal Q}(u(\tau), v(\tau)) \|_{H^{s - 1}}\, d \tau  
  +  \int_0^t \dfrac{1}{(t - \tau)^{\frac14}}\| {\mathcal Q}(u(\tau), v(\tau)) \|_{H^{s - 1}}\, d \tau \\
  \lesssim_{\omega, s} \int_0^t e^{\lambda_0 (t - \tau)} e^{2 N \lambda_0 \tau}\, d \tau  \| u \|_{s, T} \| v \|_{s, T}  
  +  \int_0^t \dfrac{e^{2 N \lambda_0 \tau}}{(t - \tau)^{\frac14}}\, d \tau \| u \|_{s, T} \| v \|_{s, T} \\
  \lesssim_{\omega, s} e^{2N \lambda_0 t} T^{\frac34} \| u \|_{s, T} \| v \|_{s, T}\,. 
\end{multline}
As a consequence,
$$
\begin{aligned}
\Big\| \int_{0}^t e^{(t - \tau) {\mathcal L}_\omega}[{\mathcal Q}( u(\tau), v(\tau))] \, d \tau \Big\|_{s, T} & = \sup_{t \in [0, T]} e^{- \lambda_0 N t} \Big\| \int_{0}^t e^{(t - \tau) {\mathcal L}_\omega}[{\mathcal Q}( u(\tau), v(\tau))] \, d \tau \Big\|_{H^s} \\
& \lesssim_{\omega, s} e^{N \lambda_0 T} T^{\frac34} \| u \|_{s, T} \| v \|_{s, T},
\end{aligned}
$$
which is the claimed bound. 
\end{proof}

We now  let
\begin{equation}\label{scelta raggio contr locale}
R := 3 C(s, N, \omega) \e^N e^{- \lambda_0 \sigma},
\end{equation}
and consider the ball 
$$
{\mathcal B}_T(s, R) := \Big\{ u \in {\mathcal E}_T(s) : \| u \|_{s, T} \leq R\Big\}.
$$
Solutions to \eqref{eq punto fisso}  are fixed points of the map
\begin{equation}\label{def mappa Phi punto fisso}
\Phi(u)(t) := \int_0^t e^{(t - \tau) {\mathcal L}_\omega} \Big[r_N(\tau) + 2 {\mathcal Q}(\vphi_{app}(\tau), u(\tau)) + {\mathcal Q}(u(\tau), u(\tau))  \Big]\, d \tau.
\end{equation}
We next prove that this map is contractive,  from which one can deduce immediately Proposition \ref{teo esistenza locale}. 

\begin{lemma}\label{lemma finale punto fisso Phi}  
Let $s > 2$, $ T \geq 1$, $N \geq 2$. 
Then there exists a constant $\sigma_0(T, s, N, \omega) > 0$ such that for any 
$\sigma$ satisfying $\sigma \geq \sigma_0(T, s, N, \omega)$, for any $\varepsilon$ satisfying  \eqref{smallness epsilon sigma}, \eqref{epsilon local existence}, the map $\Phi : {\mathcal B}_T(s, R) \to {\mathcal B}_T(s, R)$ is a contraction.
\end{lemma}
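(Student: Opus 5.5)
We verify the two properties of a contraction separately: first that $\Phi$ maps ${\mathcal B}_T(s,R)$ into itself, then that it is Lipschitz on this ball with constant at most $\frac12$. Both rest on Lemma \ref{stima propagatore con rN} and Lemma \ref{lemma stime parte quadratica}. Since $T \le T_0/2 < T_\sigma$ by \eqref{relazione T T0 T sigma}, all estimates valid on $[0,T_\sigma]$ apply on $[0,T]$. Let $C = C(s,N,\omega)$ denote the largest of the constants appearing in those two lemmas.

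\emph{Self-map.} Take $u \in {\mathcal B}_T(s,R)$ and split $\Phi(u)$ into its three Duhamel terms.
\begin{itemize}
\item The $r_N$ term is bounded in $\|\cdot\|_{s,T}$ by $C\e^N e^{-\lambda_0\sigma} = R/3$, by Lemma \ref{stima propagatore con rN}.
\item The linear term $2{\mathcal Q}(\vphi_{app},u)$ is bounded by $2C T^{3/4} e^{-\lambda_0\sigma} R$, by Lemma \ref{lemma stime parte quadratica}\ref{i:lemma stime parte quadratica.1}. This is at most $R/3$ once $\sigma$ is so large that $2CT^{3/4}e^{-\lambda_0\sigma}\le \frac13$; this fixes part of $\sigma_0(T,s,N,\omega)$.
\item The quadratic term is bounded by $C T^{3/4} e^{N\lambda_0 T} R^2$, by Lemma \ref{lemma stime parte quadratica}\ref{i:lemma stime parte quadratica.2}.
\end{itemize}

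\emph{Main obstacle.} The factor $e^{N\lambda_0 T}$ in the quadratic bound is not small. It is compensated by $R$, which carries $\e^N$. We need
$$
CT^{3/4}e^{N\lambda_0 T}R = 3C^2T^{3/4}\, e^{N\lambda_0 T}\e^N e^{-\lambda_0\sigma}\le \tfrac13 .
$$
Here we use \eqref{epsilon local existence}: $\e < e^{-2\lambda_0 T}$ gives $e^{N\lambda_0 T}\e^N \le e^{-N\lambda_0 T}\le 1$. It then suffices that $9C^2T^{3/4}e^{-\lambda_0\sigma}\le 1$, which again holds for $\sigma\ge\sigma_0(T,s,N,\omega)$. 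This is the one point where the smallness of $\e$ relative to $T$ is essential. Summing the three bounds gives $\|\Phi(u)\|_{s,T}\le R$.

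Continuity in time, $\Phi(u)\in{\mathcal C}^0([0,T],H^s_0)$, follows from the strong continuity of the semigroup and the integrability of $(t-\tau)^{-1/4}$ in the smoothing estimate of Lemma \ref{lemma stime semigruppo libero}. Zero mean is preserved because $\Pi$ removes the average and $e^{t{\mathcal L}_\omega}$ is a Fourier multiplier.

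\emph{Contraction.} For $u,v\in{\mathcal B}_T(s,R)$ the $r_N$ terms cancel, and
$$
\Phi(u)-\Phi(v)=\int_0^t e^{(t-\tau){\mathcal L}_\omega}\big[2{\mathcal Q}(\vphi_{app},u-v)+{\mathcal Q}(u-v,u)+{\mathcal Q}(v,u-v)\big]\,d\tau ,
$$
using bilinearity and symmetry of ${\mathcal Q}$. Applying Lemma \ref{lemma stime parte quadratica} to each term gives
$$
\|\Phi(u)-\Phi(v)\|_{s,T}\le \Big(2CT^{3/4}e^{-\lambda_0\sigma}+2CT^{3/4}e^{N\lambda_0 T}R\Big)\|u-v\|_{s,T}.
$$
By the two smallness conditions imposed above, the bracket is at most $\frac13+\frac23\cdot\frac13<1$, possibly after enlarging $\sigma_0$ by a harmless factor. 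Hence $\Phi$ is a contraction.

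The Banach fixed point theorem on the complete metric space ${\mathcal B}_T(s,R)$ then yields the solution of Proposition \ref{teo esistenza locale}. Its bound follows from $R\le\e^N$, which holds once $3Ce^{-\lambda_0\sigma}\le1$.
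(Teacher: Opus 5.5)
Your proposal is correct and follows the paper's proof essentially step by step. It uses the same three-term Duhamel splitting controlled by Lemmata \ref{stima propagatore con rN} and \ref{lemma stime parte quadratica}, and the same compensation of $e^{N\lambda_0 T}$ by the factor $\e^N$ in $R$. Where you invoke \eqref{epsilon local existence} directly, the paper uses $\e e^{\lambda_0 T}\le \e e^{\lambda_0 T_0}=1$, which amounts to the same thing. The contraction step also uses the same bilinear splitting.

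The only blemish is an arithmetic slip. Under your stated conditions the Lipschitz bracket is bounded by $\frac13+\frac23=1$, not by $\frac13+\frac23\cdot\frac13$. So the enlargement of $\sigma_0$ you mention is genuinely needed; the paper imposes separate bounds $\le \frac14$ on each of the two terms for this step.
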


\begin{proof}
We first establish that $\Phi$ maps the ball to itself.
Let $\| u \|_{s, T} \leq R$, then by Lemmata \ref{stima propagatore con rN}, \ref{lemma stime parte quadratica}, one has that 
$$
\| \Phi(u) \|_{s, T} \leq C(s, N, \omega) e^{- \lambda_0 \sigma} \e^N + C(s, N, \omega) T^{\frac34} e^{- \lambda_0 \sigma } R + C(s, N, \omega) T^{\frac34} e^{N \lambda_0 T}  R^{2}.
$$
We then conclude $\|\Phi(u)\|_{s,T}\leq R$ provided that
$$
C(s, N, \omega) e^{- \lambda_0 \sigma} \e^N \leq \frac{R}{3}\,, \quad C(s, N, \omega) T^{\frac34} e^{- \lambda_0 \sigma }  \leq \frac13\, \quad \mathrm{and}\quad C(s, N, \omega) T^{\frac34} e^{N \lambda_0 T} R \leq \frac13\,.
$$
The first condition is satisfied by \eqref{scelta raggio contr locale}. The second condition is satisfied by taking 
$$
\sigma \geq \frac{1}{\lambda_0} \log \Big( 3C(s, N, \omega) T^{\frac34} \Big) \,. 
$$
By the definition of $R$ in \eqref{scelta raggio contr locale}, the third condition is equivalent to 
$$
3 (C(s, N, \omega))^2 T^{\frac34} (e^{ \lambda_0 T}  \e)^N e^{- \lambda_0 \sigma} \leq \frac13.
$$
Since $T \leq T_\sigma = T_0 - \sigma \leq T_0$ and $\e e^{\lambda_0 T} \leq \e e^{\lambda_0 T_0} = 1$, 
this cobstraint will be satisfied if
$$
3 (C(s, N, \omega))^2 T^{\frac34} (e^{ \lambda_0 T}  \e)^N e^{- \lambda_0 \sigma} \leq 3 (C(s, N, \omega))^2 T^{\frac34}  e^{- \lambda_0 \sigma}  \leq \frac13,
$$
which this in turn holds by taking 
$$
\sigma \geq \frac{1}{\lambda_0} \log\Big(9 (C(s, N, \omega))^2 T^{\frac34} \Big)\,. 
$$

We next establish the contraction property.
Let $u_1, u_2$ be such that $\| u_1 \|_{s, T}, \| u_2 \|_{s, T} \leq R$. Then for any $t \in [0, T]$, 
$$
\begin{aligned}
\Phi(u_1)(t) - \Phi(u_2)(t) & = \int_0^t e^{(t - \tau) {\mathcal L}_\omega} \Big[ 2 {\mathcal Q}(\vphi_{app}(\tau), u_1(\tau) - u_2(\tau)) + {\mathcal Q}(u_1(\tau), u_1(\tau)) - {\mathcal Q}(u_2(\tau), u_2(\tau)) \Big]\, d \tau \\
& = \int_0^t e^{(t - \tau) {\mathcal L}_\omega} \Big[ 2 {\mathcal Q}(\vphi_{app}(\tau), u_1(\tau) - u_2(\tau)) \Big]\, d \tau  \\
& \quad + \int_0^t e^{(t - \tau) {\mathcal L}_\omega} \Big[  {\mathcal Q}(u_1(\tau), u_1(\tau) - u_2(\tau)) \Big]\, d \tau  \\
& \quad +   \int_0^t e^{(t - \tau) {\mathcal L}_\omega} \Big[  {\mathcal Q}(u_1(\tau) - u_2(\tau),  u_2(\tau)) \Big]\, d \tau\,.
\end{aligned}
$$
It follows from Lemma \ref{lemma stime parte quadratica} that
$$
\begin{aligned}
 \| \Phi(u_1 ) - \Phi(u_2) \|_{s, T} & \leq C(s, N, \omega) T^{\frac34} e^{- \lambda_0 \sigma } \| u_1 - u_2 \|_{s, T}  \\
 & \quad + C(s, N, \omega) T^{\frac34} e^{N \lambda_0 T} \big(\| u_1 \|_{s, T} + \| u_2 \|_{s, T} \big) \| u_1 - u_2 \|_{s, T} \\ 
 & \leq \Big(C(s, N, \omega) T^{\frac34} e^{- \lambda_0 \sigma } + 2 C(s, N, \omega) T^{\frac34} e^{N \lambda_0 T} R  \Big) \| u_1 - u_2 \|_{s, T} \\
 & \leq \frac12 \| u_1 - u_2 \|_{s, T}\,,  
\end{aligned}
$$
provided that 
$$
\begin{aligned}
& C(s, N, \omega) T^{\frac34} e^{- \lambda_0 \sigma } \leq \frac14\,,  \\
& 2 C(s, N, \omega) T^{\frac34} e^{N \lambda_0 T} R \stackrel{\eqref{scelta raggio contr locale}}{=} 6 C(s, N, \omega)^2 T^{\frac34} (e^{ \lambda_0 T} \e)^N e^{- \lambda_0 \sigma} \leq \frac14\,.
\end{aligned}
$$
The first condition above is fulfilled by taking 
$$
\sigma \geq \frac{1}{\lambda_0} \log\Big(4 C(s, N, \omega) T^{\frac34} \Big)\,.
$$
The second condition can be verified as follows. Since $0 <T < T_0 /2 < T_\sigma = T_0 - \sigma < T_0$ and $e^{\lambda_0 T}\e \leq e^{\lambda_0 T_0} \e = 1$, the second condition is equivalent to 
$$
6 C(s, N, \omega)^2 T^{\frac34} (e^{ \lambda_0 T} \e)^N e^{- \lambda_0 \sigma} \leq 6 C(s, N, \omega)^2 T^{\frac34}  e^{- \lambda_0 \sigma} \leq \frac14,
$$
which holds provided
$$
\sigma \geq \frac{1}{\lambda_0} \log\Big(24C(s, N, \omega)^2 T^{\frac34} \Big)\,.
$$
Hence $\Phi : {\mathcal B}_T(s, R) \to {\mathcal B}_T(s, R)$ is a contraction and the proof of the lemma is concluded. 
\end{proof}

\section{Continuation argument, long time existence and proof of Theorem \ref{main theorem statement}}\label{sezione long time existence}
In this section we fix the Sobolev index 
\begin{equation}\label{indice sobolev soglia minima finale}
s \geq 4\,. 
\end{equation}
We  define 
\begin{equation}\label{maximal time}
\begin{aligned}
T_* & := \sup\Big\{ T \geq 1  : u \in {\mathcal C}^0([0, T], H^s_0(\T^2)) \cap {\mathcal C}^1([0, T], H^{s - 4}_0(\T^2)) \quad \text{solves} \quad \eqref{PDE app sol u}  \\
& \qquad \text{and}\qquad \| u \|_{s, T}\leq  \e^N \Big\}\,. 
\end{aligned}
\end{equation}
We want to prove that $T_* > T_\sigma$. We argue by contradiction. Let us assume that $T_* \leq T_\sigma$. By the definition of $T_*$,  one immediately has that $u \in {\mathcal C}^0\big( [0, T_*], H^s_0(\T^2) \big)$ and 
\begin{equation}\label{stima u s T*}
\begin{aligned}
& \| u \|_{s, T_*} = \sup_{t \in [0, T_*]} e^{- N \lambda_0 t} \| u(t) \|_{H^s} = \e^N\,. 
\end{aligned}
\end{equation} 
We shall perform an energy estimate on the Cauchy problem 
\begin{equation}\label{prob cauchy per stima di energia}
\begin{cases}
\partial_t u = {\mathcal L}_\omega u + 2 {\mathcal Q}(\vphi_{app}, u) + {\mathcal Q}(u,  u) + r_N, \\
u(0, x) = 0\,.
\end{cases}
\end{equation}
We will use the Bony paralinearization formula (see \eqref{paraprodotto}) to control the quadratic terms. This formula reads 
$$
\nabla v \cdot \nabla \vphi = T_{\nabla \vphi} \cdot \nabla v + T_{\nabla v} \cdot \nabla \vphi + {\mathcal R}_B(\nabla v ,  \nabla \vphi),
$$
where 
$$
\begin{aligned}
T_{\nabla \vphi} \cdot \nabla v & = \sum_{k = 1}^2  T_{\partial_{x_k} \vphi} \cdot \partial_{x_k} v\,, \quad T_{\nabla v} \cdot \nabla \vphi & =  \sum_{k = 1}^2  T_{\partial_{x_k} v}  \partial_{x_k} \vphi\,, \\
{\mathcal R}_B(\nabla v ,  \nabla \vphi) & := \sum_{k = 1}^2 {\mathcal R}_B(\partial_{x_k} v ,  \partial_{x_k} \vphi)\,.
\end{aligned}
$$
The paraproduct expansion gives
$$
\begin{aligned}
& 2 {\mathcal Q}(\vphi_{app}(t), u(t)) = 2 T_{\nabla \vphi_{app}(t)} \cdot \nabla u(t) + 2 T_{\nabla u(t)} \cdot \nabla \vphi_{app}(t) + 2 {\mathcal R}_B(\nabla \vphi_{app}(t), \nabla u(t))\,, \\
&  {\mathcal Q}(u(t), u(t)) = 2 T_{\nabla u(t)} \cdot \nabla u(t) + {\mathcal R}_B(\nabla u(t) , \nabla u(t)).
\end{aligned}
$$
Therefore,
\begin{equation}\label{def fN t}
\begin{aligned}
& 2 {\mathcal Q}(\vphi_{app}(t), u(t)) + {\mathcal Q}(u(t),  u(t)) = T_{A(t)} \cdot \nabla u(t) + f_N(t)\,, \\
& A(t) := 2 \big( \nabla \vphi_{app}(t) + \nabla u(t)\big)\,, \\
& f_N(t) := r_N(t)  + 2 T_{\nabla u(t)} \cdot \nabla \vphi_{app}(t) + 2 {\mathcal R}_B(\nabla \vphi_{app}(t), \nabla u(t)) + {\mathcal R}_B(\nabla u(t), \nabla u(t))\,. 
\end{aligned}
\end{equation}
We then rewrite  \eqref{prob cauchy per stima di energia} as 
\begin{equation}\label{prob cauchy per stima di energia 1}
\begin{cases}
\partial_t u = {\mathcal L}_\omega u + T_{A} \cdot \nabla u + f_N(t),  \\
u(0, x) = 0\,. 
\end{cases}
\end{equation}
From Lemma \ref{lemma stime vphi app}, by \eqref{stima u s T*} and  using that $T_* \leq  T_\sigma$, it follows thar
\begin{equation}\label{stime vphi app stime u}
\|\vphi_{app}(t) \|_{H^{s + 1}} \leq 2 \e e^{\lambda_0 t}, \quad \| u (t) \|_{H^s} \leq e^{N \lambda_0 t} \e^N, \quad \forall t \in [0, T_*],
\end{equation}
which implies 
\begin{equation}\label{stima trasporto paralinearizzato}
\begin{aligned}
\| A(t) \|_{H^{s - 1}} & \leq 2 (\| \vphi_{app}(t) \|_{H^s} + \| u(t) \|_{H^s}) \leq 4\e e^{\lambda_0 t} + 2\e^N e^{N \lambda_0 t} 
\\
& \leq \e e^{\lambda_0 t}(4 + 2 (\e e^{\lambda_0 t})^{N - 1})\leq  6 \e e^{\lambda_0 t}.
\end{aligned}
\end{equation}
Above, we have used again that $N \geq 1$ and that $0 \leq t \leq T_*\leq T_\sigma = T_0 - \sigma$. Consequently, $\e e^{\lambda_0 t} \leq \e e^{\lambda_0 T_0} e^{- \lambda_0 \sigma} \leq e^{- \lambda_0 \sigma} \leq 1$ for $\sigma >0$ large enough.

Next, Lemma \ref{continuit paradiff} (applied with $d = 2$, $s_0 = 2$) implies that for any $t \in [0, T_*]$ 
\begin{equation}\label{stima T nabla u nabla vphi app}
\begin{aligned}
\| T_{\nabla u(t)} \cdot \nabla \vphi_{app}(t) \|_{H^s} & \lesssim_s \| \nabla u(t) \|_{H^2} \| \nabla \vphi_{app}(t) \|_{H^s} \lesssim_s \| u(t) \|_{H^3} \| \vphi_{app}(t) \|_{H^{s + 1}} \\
&  \stackrel{\eqref{indice sobolev soglia minima finale}}{\lesssim_s} \| u(t) \|_{H^s} \| \vphi_{app}(t) \|_{H^{s + 1}} \stackrel{\eqref{stime vphi app stime u}}{\lesssim_s} e^{(N + 1) \lambda_0 t} \e^{N + 1}\,,
\end{aligned}
\end{equation}
while Lemma \ref{stima astratta resto paraproduct} (applied with $d = 2$, $s_0 = 2$, $s_1 = 1$, $s_2 = s - 1$) implies that for any $t \in [0, T_*]$ 
\begin{equation}\label{stima RB nabla vphi app nabla u}
\begin{aligned}
\| {\mathcal R}_B(\nabla \vphi_{app}(t), \nabla u(t)) \|_{H^s} & \lesssim_s  \| \nabla \vphi_{app} (t)\|_{H^{3}} \| \nabla u(t)\|_{H^{s - 1}}  \lesssim_s \| \vphi_{app}(t) \|_{H^4} \| u(t) \|_{H^s} \\
&  \stackrel{\eqref{indice sobolev soglia minima finale}}{\lesssim_s} \| \vphi_{app}(t) \|_{H^{s + 1}} \| u(t) \|_{H^s} \stackrel{\eqref{stime vphi app stime u}}{\lesssim_s } e^{(N + 1) \lambda_0 t} \e^{N + 1},
\end{aligned}
\end{equation}
and 
\begin{equation}\label{stima RB nabla u nabla u}
\begin{aligned}
\| {\mathcal R}_B(\nabla u(t), \nabla u(t)) \|_{H^s} & \lesssim_s  \| \nabla u (t)\|_{H^{3}} \| \nabla u(t)\|_{H^{s - 1}}  \lesssim_s \| u(t) \|_{H^4} \| u(t) \|_{H^s} \\
&  \stackrel{\eqref{indice sobolev soglia minima finale}}{\lesssim_s} \| u(t) \|_{H^s}^2 \stackrel{\eqref{stime vphi app stime u}}{\lesssim_s } \Big(e^{N \lambda_0 t} \e^{N} \Big)^2  \lesssim_s e^{(N + 1) \lambda_0 t} \e^{N + 1}.
\end{aligned}
\end{equation}
Above we have used that 
$$
\Big(e^{N \lambda_0 t} \e^{N} \Big)^2 = \Big( e^{(N + 1) \lambda_0 t} \e^{N + 1} \Big) \Big( e^{(N - 1) \lambda_0 t} \e^{N - 1} \Big),
$$
again since  $N \geq 2$ and $0 \leq t \leq T_* \leq T_\sigma = T_0 - \sigma \leq T_0$.
Furthermore,
$$
e^{(N - 1) \lambda_0 t} \e^{N - 1} = \Big( e^{\lambda_0 t} \e\Big)^{N - 1} {\leq} \Big( e^{\lambda_0 T_0} \e\Big)^{N - 1} = 1\,. 
$$
Hence from Lemma \ref{lemma stima rN}, by  estimates \eqref{stima T nabla u nabla vphi app}, \eqref{stima RB nabla vphi app nabla u}, \eqref{stima RB nabla u nabla u} and recalling the definition of $f_N$ in \eqref{def fN t}, we obtain
\begin{equation}\label{stime forzante nuova0}
\| f_N(t) \|_{H^s} \lesssim_{s,N, \omega} \e^{N + 1} e^{(N + 1) \lambda_0 t}, \qquad \forall t \in [0, T_*]\,. 
\end{equation}

%
%
%
In what follows, we will employ the notation
$$
\| u \|_{H^s} := \| \Lambda^s u \|_{L^2} \quad \text{where} \quad \Lambda^s u(x) = \sum_{\xi \in \Z^2 \setminus \{ 0 \}} \langle \xi \rangle^s \widehat u(\xi) e^{\ii x \cdot \xi}\,. 
$$
In order to perform energy estimates in a rigorous way, we regularize the Cauchy problem \eqref{prob cauchy per stima di energia 1}. To this end, we introduce a cut-off function $\chi$ satisfying  
\begin{equation}\label{cut off bla.1}
\begin{aligned}
& \chi \in {\mathcal C}^\infty_c(\R^d), \quad 0 \leq \chi \leq 1, \\
& \chi(\xi) = 1, \quad \forall |\xi| \leq 1\,, \\
& \chi(\xi) = 0, \quad \forall |\xi| \geq 2.
\end{aligned}
\end{equation}
For any $M > 0$ we set 
\begin{equation}\label{def chi M}
\chi_M(\xi) := \chi(M^{- 1} \xi), \quad \xi \in \R^d.
\end{equation}
The following elementary properties hold: 
\begin{equation}\label{cut off bla.2}
\begin{aligned}
& \chi_M \in {\mathcal C}^\infty_c(\R^d), \quad 0 \leq \chi_M \leq 1, \\
& \chi_M(\xi) = 1, \quad \forall |\xi| \leq M\,, \\
&  \chi_(\xi) = 0, \quad \forall |\xi| \geq 2M\,, \\
& \partial_\xi^\alpha \chi_M(\xi) = 0, \quad \text{if} \quad \alpha \neq 0, \quad |\xi| \leq M \quad \text{or} \quad |\xi| \geq 2M \,,\\
& |\partial_\xi^\alpha \chi_M(\xi)| \lesssim_\alpha \langle \xi \rangle^{- |\alpha|}, \quad \forall \xi \in \R^d\, \quad \text{uniformly with respect to} \quad M \geq  0. 
\end{aligned}
\end{equation}

We next define the mollifier 
\begin{equation}\label{def cal SK}
{\mathcal S}_M : L^2(\T^d) \to {\mathcal C}^\infty(\T^d), \quad u(x) \mapsto {\mathcal S}_M u(x) := \sum_{\xi \in \Z^d } \chi_M(\xi) \widehat u(\xi) e^{\ii x \cdot \xi}\,. 
\end{equation} 
We then consider the regularized Cauchy problem by setting $u_M(t) := {\mathcal S}_M u(t)$, $t \in [0, T_*]$. Then by \eqref{prob cauchy per stima di energia 1} $u_M(t)$ solves 
\begin{equation}\label{regularized cauchy problem}
\begin{cases}
\partial_t u_M = {\mathcal L}_\omega u_M +  T_{A} \cdot \nabla u_M + f_{N, M}(t),  \\
u_M(0, x) = 0\,,
\end{cases}
\end{equation}
where
\begin{equation}\label{def f N M}
f_{N, M}(t) := {\mathcal S}_M f_N(t) + [{\mathcal S}_M, T_{A} \cdot \nabla] u(t)\,.
\end{equation}
Lemma \ref{commutatore mollifier} (applied with $d = 2$, $s_0 = 2$), the estimates on $f_N$ in \eqref{stime forzante nuova0}, and the fact that $\| {\mathcal S}_M u \|_{H^s} \leq \| u \|_{H^s}$ give
\begin{equation}\label{stime forzante nuova}
\begin{aligned}
\| f_{N, M}(t) \|_{H^s} & \leq \| f_N(t) \|_{H^s} + \| [{\mathcal S}_M, T_{A} \cdot \nabla] u(t)\|_{H^s} \\
& \lesssim_{s, N,  \omega} \e^{N + 1} e^{(N + 1) \lambda_0 t} + \| A(t) \|_{H^3} \| u(t) \|_{H^s} \\
& \stackrel{\eqref{stime vphi app stime u}, \eqref{stima trasporto paralinearizzato}}{\lesssim_{s,N,  \omega}} \e^{N + 1} e^{(N + 1) \lambda_0 t}\,.  
\end{aligned}
\end{equation}
 We then have 
\begin{align}
\frac{d}{d t} \| u_M(t) \|_{H^s}^2 & = \big\langle \Lambda^s \partial_t u_M(t) \,,\, \Lambda^s u_M(t) \big\rangle_{L^2} + \big\langle \Lambda^s u_M(t) \,,\, \Lambda^s  \partial_t  u_M(t) \big\rangle_{L^2}  \nonumber \\
& = \big\langle {\mathcal L}_\omega \Lambda^s u_M(t)\,,\, \Lambda^s u_M(t) \big\rangle_{L^2} + \big\langle \Lambda^s u_M(t)\,,\, {\mathcal L}_\omega \Lambda^s u_M(t) \big\rangle_{L^2} \nonumber\\
& \quad + \big\langle \Lambda^s (T_{A(t)} \cdot \nabla u_m(t))\,,\, \Lambda^s u_M(t)\big\rangle_{L^2} +  \big\langle \Lambda^s u_M(t)\,,\, \Lambda^s (T_{A(t)} \cdot \nabla u(t)) \big\rangle_{L^2} \nonumber\\
& \quad + \big\langle \Lambda^s {\mathcal S}_Mf_{N, M}(t)\,,\, \Lambda^s u_M(t) \big\rangle_{L^2} + \big\langle \Lambda^s u_M(t)\,,\, \Lambda^s {\mathcal S}_Mf_{N, M}(t) \big\rangle_{L^2} \nonumber\\
& \leq  2 \big\langle {\mathcal L}_\omega \Lambda^s u_M(t)\,,\, \Lambda^s u_M(t) \big\rangle_{L^2} + \big\langle T_{A(t)} \cdot \nabla \Lambda^s u_M(t)\,,\, \Lambda^s u_M(t) \big\rangle_{L^2}  \label{stima di energia 0}\\
& \quad + \big\langle \Lambda^s u_M(t)\,,\,  T_{A(t)} \cdot \nabla \Lambda^s u_M(t) \big\rangle_{L^2}   + \big\langle [\Lambda^s\,,\, T_{A(t)} \cdot \nabla] u_M(t)\,,\, \Lambda^s u_M(t) \big\rangle_{L^2} \nonumber\\
& \quad + \big\langle \Lambda^s u_M(t)\,,\,  [\Lambda^s\,,\, T_{A(t)} \cdot \nabla] u_M(t)  \big\rangle_{L^2} + 2 \| f_{N, M}(t) \|_{H^s} \| u(t) \|_{H^s} \nonumber\\
& \leq 2 \big\langle {\mathcal L}_\omega \Lambda^s u_M(t)\,,\, \Lambda^s u_M(t) \big\rangle_{L^2} + \Big\langle \Big(T_{A(t)} \cdot \nabla + (T_{A(t)} \cdot \nabla)^* \Big) \Lambda^s u_M(t)\,,\, \Lambda^s u_M(t) \Big\rangle_{L^2}  \nonumber\\ 
& \quad  + 2 \|  [\Lambda^s\,,\, T_{A(t)} \cdot \nabla] u_M(t) \|_{L^2} \| u_M(t) \|_{H^s}  + 2 \| f_{N, M}(t) \|_{H^s} \| u_M(t) \|_{H^s}\,. \nonumber
\end{align}

We estimate each term above separately. 
We observe that ${\mathcal L}_\omega = {\rm diag}_{\xi \in \Z^2 \setminus \{ 0 \}} \lambda_\omega(\xi)$ and that $\lambda_\omega(\xi) \leq \lambda_0$ for any $\xi \in \Z^2 \setminus \{ 0 \}$. Consequently,
\begin{equation}\label{stima forma quadratica cal L omega}
\begin{aligned}
\big\langle {\mathcal L}_\omega \Lambda^s u_M\,,\, \Lambda^s u_M \big\rangle_{L^2} & = \sum_{\xi \in \Z^2 \setminus \{ 0 \}} \lambda_\omega(\xi) \langle \xi \rangle^{2 s} |\widehat u_M(\xi)|^2 \\
& \leq \lambda_0 \sum_{\xi \in \Z^2 \setminus \{ 0 \}}  \langle \xi \rangle^{2 s} |\widehat u_M(\xi)|^2  \leq \lambda_0 \| u_M \|_{H^s}^2\,. 
\end{aligned}
\end{equation}
We also note that $(T_A \cdot \nabla)^* = \sum_{k = 1}^2 (T_{A_k} \partial_{x_k})^*$. Thus, by Lemmata \ref{lemma commutatore Lambda s}, \ref{lemma aggiunto Ta dx} (applied with $d = 2, s_0 = 2$) and by the estimate \eqref{stima trasporto paralinearizzato},  for any $t \in [0, T_*]$
\begin{equation}\label{stime paraprodotti nell energia}
\begin{aligned}
& \|  [\Lambda^s\,,\, T_{A(t)} \cdot \nabla] u_M \|_{L^2}\,,\,\Big\| \Big(T_A \cdot \nabla + (T_A \cdot \nabla)^* \Big) \Lambda^s u_M \Big\|_{L^2}   \\
& \lesssim_s \| A(t) \|_{H^3} \| u_M(t) \|_{H^s} \stackrel{\eqref{stima trasporto paralinearizzato}, \eqref{indice sobolev soglia minima finale}, \eqref{stima u s T*}}{\lesssim_s}   \e^{N + 1} e^{(N + 1) \lambda_0 t}\,. 
\end{aligned}
\end{equation}
The two latter estimates together with \eqref{stime forzante nuova}, \eqref{stima forma quadratica cal L omega} and \eqref{stima di energia 0} imply that for any for any $t \in [0, T_*]$,
$$
\frac{d}{d t} \| u_M(t) \|_{H^s}^2 \leq  \lambda_0  \| u_M(t) \|_{H^s}^2 +   C_0 \e^{2 N + 1} e^{(2 N + 1) \lambda_0 t},
$$
for some constant $C_0 \equiv C_0(s, N, \omega) > 0$. 
Since $u_M(0) = 0$, by the comparison principle for ODEs one has that 
$$
\| u_M(t) \|_{H^s}^2 \leq z(t), \quad \forall t \in [0, T_*],
$$
where $z(t)$ solves the problem
$$
\begin{cases}
\dot z(t) = \lambda_0 z(t) +  C_0 \e^{2 N + 1} e^{(2 N + 1) \lambda_0 t} \\
z(0) = 0. 
\end{cases}
$$
Hence,
$$
\begin{aligned}
z(t) & = C_0  \e^{2 N + 1} \int_0^t e^{\lambda_0(t - \tau)}  e^{(2 N + 1) \lambda_0 t}\, d \tau   \stackrel{\eqref{stime forzante nuova}}{\lesssim_{s,N,  \omega}} \e^{2 N + 1} e^{\lambda_0 t}  \int_0^t  e^{2 N \lambda_0 \tau}\, d \tau  \\
& \lesssim_{s,N, \omega}  \e^{2(N + 1)} e^{2(N + 1) \lambda_0 t}.
\end{aligned}
$$
We then have that
\begin{equation}\label{final estimate uM}
\| u_M(t) \|_{H^s} \leq C(s,N, \omega) \e^{N + \frac12} e^{(N + \frac12) \lambda_0 t}, \quad \forall t \in [0, T_*]\, , 
\end{equation}
for some constant $C(s,N, \omega) >  0$. Since
$$
\lim_{M \to + \infty}\| {\mathcal S}_M u - u \|_{H^s} = 0, \quad \forall u \in H^s(\T^2),
$$ 
and $u(t) \in H^s(\T^2)$ for any $t \in [0, T_*]$, passing to the limit in the inequality \eqref{final estimate uM} gives
$$
\| u(t) \|_{H^s} \leq C(s,N, \omega) \e^{N + \frac12} e^{(N + \frac12) \lambda_0 t}, \quad \forall t \in [0, T_*]\,. 
$$
Now, since $T_* \leq T_\sigma = T_0 - \sigma$ and $\e e^{\lambda_0 T_0} = 1$, by choosing $\sigma \gg 0$ so large that
$$
C(s,N, \omega) (\e e^{\lambda_0 t})^{\frac12} \leq C(s,N, \omega) (\e e^{\lambda_0 T_0})^{\frac12} e^{- \frac{\lambda_0 \sigma}{2}} \leq C(s,N, \omega) e^{- \frac{\lambda_0 \sigma}{2}} \leq \frac{1}{2},
$$
we have
$$
\| u \|_{s, T_*} = \sup_{t \in [0, T_*]} e^{- N \lambda_0 t} \| u(t) \|_{H^s} \leq \frac12 \e^N\,.
$$
This latter inequality contradicts \eqref{stima u s T*}.

\subsection{Conclusion of the proof of Theorem \ref{main theorem statement}}\label{sezione finale dim teorema}
From the contraction property for the map $\Phi$, the only solution $\vphi$ of the Cauchy problem \eqref{Prob Cauchy instab} is given by $\vphi = \vphi_{app} + u$,  is an element of the space
$$
{\mathcal C}^0([0, T_\sigma], H^s_0(\T^2)) \cap {\mathcal C}^1([0, T_\sigma], H^{s - 4}_0(\T^2)),
$$
and satisfies the bound
$$
\| u (t) \|_{H^s} \leq \e^N e^{N \lambda_0 t}, \quad \forall t \in [0, T_\sigma].
$$

Hence, using again that $\e e^{\lambda_0 t} \leq e^{- \lambda_0 \sigma}$ for any $t \in [0, T_\sigma]$, one obtains that 
$$
\sup_{t \in [0, T_\sigma]} \| u(t) \|_{H^s} \leq e^{- N \lambda_0 \sigma}\,.
$$
We can fix $N := 2$. Therefore, the above estimate and \eqref{eq:lemma stime vphi app.1} imply for any $t \in [0, T_\sigma]$, 
$$
\begin{aligned}
\sup_{t \in [0, T_\sigma]} \| \vphi(t) - \e \vphi_1(t) \|_{H^s} & \leq \sup_{t \in [0, T_\sigma]} \| \vphi_{app}(t) - \e \vphi_1(t) \|_{H^s} + \sup_{t \in [0, T_\sigma]} \| u(t) \|_{H^s} \\
& \leq C(s, N, \omega) e^{- 2 \lambda_0 \sigma} + e^{- N \lambda_0 \sigma} \stackrel{N = 2}{\leq} C_1(s, N, \omega) e^{- 2 \lambda_0 \sigma},
\end{aligned}
$$
where $C_1(s, N, \omega) := 1 + C(s, N, \omega)$. Moreover, since $\vphi_1(t, x) = e^{\lambda_0 t} \vphi_{in }(x)$ where $\| \vphi_{in } \|_{H^s} = 1$, it follows that
$$
\e \| \vphi_1(T_\sigma) \|_{H^s} = \e e^{\lambda_0 T_\sigma} \| \vphi_{in} \|_{H^s} = \e e^{\lambda_0 T_0} e^{- \lambda_0 \sigma} \stackrel{\eqref{def tempo T0 T sigma}}{=} e^{- \lambda_0 \sigma}\,.
$$
Hence, one gets 
$$
\begin{aligned}
\| \vphi(T_\sigma) \|_{H^s} & \geq \e \| \vphi_1(T_\sigma) \|_{H^s} - \sup_{t \in [0, T_\sigma]} \| \vphi(t) - \e \vphi_1(t) \|_{H^s}  \\
& \geq  e^{- \lambda_0 \sigma} - C_1(s, N, \omega) e^{- 2 \lambda_0 \sigma} \geq \frac12 e^{- \lambda_0 \sigma},
\end{aligned}
$$
provided that 
$$
\sigma \geq \frac{1}{\lambda_0} \log(2 C_1(s, N, \omega))\,. 
$$
Finally we define the constants $\delta_0(s, \omega), \e_0(s, \omega)$ appearing in the Theorem \ref{main theorem statement} as
$$
\delta_0(s, \omega) := \frac12 e^{- \lambda_0 \sigma}, \quad \e_0(s, \omega) := e^{- 2 \lambda_0 \sigma}\,. 
$$
The proof of Theorem \ref{main theorem statement} is complete.

\appendix
\section{Some tools from Para-differential calculus}\label{appendice paradiff}
In this appendix we shall collect some properties of para-differential operators. We also give all the proofs in the quantitative way needed in the rest of the paper. For an exhaustive theory of para-differential operators we refer to the monographs \cite{Metivier}, \cite{Taylor}. 

\noindent
Let $\psi \in C^\infty_c(\R)$ be such that 
\begin{equation}\label{cut-off-1}
\begin{aligned}
& 0 \leq \psi \leq 1,  \\
& \psi (y) = 1, \quad \forall y \in [-1/2,1/2]\,, \\
&   \psi(y) = 0 \,,\quad \forall y \in \R \setminus [- 1, 1]\,. 
\end{aligned}
\end{equation}
Given $0 < \delta < 1$, we  let 
\begin{equation}\label{cut-off-2}
\psi_\delta(\eta, \xi) := \psi \Big( \frac{\langle \eta \rangle}{\delta \langle \xi \rangle} \Big), \quad (\eta, \xi) \in \R^d \times \R^d\,. 
\end{equation}
We then define the paraproduct operator $T_a$ for given a Sobolev functions $a\in H^{s_0}(\T^d)$, $s_0 > \frac{d}{2}$, as
\begin{equation}\label{def paraproduct}
T_a u(x)  : = \sum_{\xi \in \Z^d} \sigma_a(x, \xi) \widehat u(\xi) e^{\ii x \cdot \xi}\,,\quad \sigma_a(x, \xi) := \sum_{\eta \in \Z^d} \psi_\delta(\eta, \xi) \widehat a(\eta) e^{\ii x \cdot \eta}\,.
\end{equation}
In the sequel, we do not emphasize the dependence of all constants on $\delta$ since it has to be taken small enough but it is a fixed parameter. The following lemma holds. 
\begin{lemma}\label{continuit paradiff}
Let $a \in H^{s_0}(\T^d)$, $s_0 > d/2$. Then the operator $T_a$ is a bounded linear operator from $H^{s}(\T^d)$ into itself for any $s \geq 0$ and 
\begin{equation}\label{stima Ta}
\| T_a \|_{{\mathcal B}(H^{s})} \lesssim_{s} \| a \|_{H^{s_0}}\,.
\end{equation}
\end{lemma}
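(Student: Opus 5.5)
The plan is to estimate $T_a u$ directly on the Fourier side, using the frequency localization $\langle \eta \rangle \le \delta \langle \xi \rangle$ built into the cut-off $\psi_\delta$. Expanding the definition gives
$$
\widehat{T_a u}(\zeta) = \sum_{\xi + \eta = \zeta} \psi_\delta(\eta, \xi)\, \widehat a(\eta)\, \widehat u(\xi).
$$
On the support of $\psi_\delta$ we have $\langle \eta \rangle \le \delta \langle \xi \rangle$ with $\delta < 1$. Hence $|\zeta| \le |\xi| + |\eta|$ and $|\xi| \le |\zeta| + |\eta|$, so $\langle \zeta \rangle \sim_\delta \langle \xi \rangle$: concretely, $(1-\delta)\langle \xi\rangle \lesssim \langle \zeta \rangle \lesssim (1+\delta)\langle \xi \rangle$, up to harmless constants. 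It follows that $\langle \zeta \rangle^s \lesssim_{s,\delta} \langle \xi \rangle^s$ on the support of the sum. This holds for all $s \ge 0$, and in fact for all real $s$, since the comparison goes both ways.

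With this, bound termwise using $0 \le \psi \le 1$:
$$
\langle \zeta \rangle^s |\widehat{T_a u}(\zeta)| \lesssim_s \sum_{\eta} |\widehat a(\eta)|\, \langle \zeta - \eta \rangle^s |\widehat u(\zeta - \eta)|.
$$
The right-hand side is the discrete convolution $|\widehat a| * (\langle \cdot \rangle^s |\widehat u|)$. By Young's inequality $\ell^1 * \ell^2 \subset \ell^2$, its $\ell^2_\zeta$ norm is at most $\| \widehat a \|_{\ell^1} \| u \|_{H^s}$.

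It remains to control $\| \widehat a\|_{\ell^1}$. By Cauchy–Schwarz,
$$
\sum_\eta |\widehat a(\eta)| \le \Big( \sum_\eta \langle \eta \rangle^{-2 s_0} \Big)^{1/2} \| a \|_{H^{s_0}} \lesssim_{s_0} \| a\|_{H^{s_0}},
$$
and the lattice sum converges precisely because $2 s_0 > d$. Combining the two bounds yields the estimate \eqref{stima Ta}, with constants depending on $s$, $s_0$ and the fixed $\delta$.

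The only step needing care is the frequency comparison $\langle \zeta\rangle \sim \langle \xi \rangle$. There one has to check that the bound $\langle \eta\rangle \le \delta\langle\xi\rangle$ really forces $|\eta| \le \delta \langle \xi \rangle$, which gives $\langle \zeta \rangle \ge \langle \xi \rangle - |\eta| \ge (1-\delta)\langle\xi\rangle$ (using that $\langle\cdot\rangle$ is 1-Lipschitz). This lower bound is only needed if one wants negative $s$ as well; for $s \ge 0$ the upper bound $\langle\zeta\rangle \le (1+\delta)\langle\xi\rangle$ suffices, and that one is immediate.
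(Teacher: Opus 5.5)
Your proof is correct and follows essentially the paper's argument. Both proofs use the support condition $\langle \eta\rangle \le \delta\langle \xi\rangle$ of $\psi_\delta$ to replace $\langle \xi+\eta\rangle^s$ by $\langle \xi\rangle^s$, and then close with a convolution bound based on $\sum_k \langle k\rangle^{-2s_0}<\infty$. The only difference is cosmetic: you factor the last step through Young's inequality and $\|\widehat a\|_{\ell^1}\lesssim \|a\|_{H^{s_0}}$, which even yields the slightly sharper bound in terms of $\|\widehat a\|_{\ell^1}$, whereas the paper does a single weighted Cauchy--Schwarz with weights $\langle \eta-\xi\rangle^{\pm s_0}$.
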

\begin{proof}
By \eqref{cut-off-1}, \eqref{cut-off-2} one has that $\widehat{\sigma_a}(\eta, \xi) = \psi_\delta(\eta, \xi) \widehat a(\eta)$ satisfies  $${\rm supp}(\widehat \sigma_a) \subseteq \{ (\eta, \xi) \in \Z^d \times \R^d : \langle \eta \rangle\leq \delta \langle \xi \rangle \}. $$ 
Moreover, 
$$
\widehat{T_a u}(\eta)   = \sum_{\begin{subarray}{c}
\xi \in \Z^d 
\end{subarray}} \psi_\delta(\eta - \xi, \xi) \widehat a(\eta - \xi) \widehat u(\xi) = \sum_{\begin{subarray}{c}
\xi \in \Z^d  \\
\langle \eta - \xi \rangle \leq \delta \langle \xi \rangle
\end{subarray}} \psi_\delta(\eta - \xi, \xi) \widehat a(\eta - \xi) \widehat u(\xi) \,.
$$
We then estimate
\begin{eqnarray}
\| T_a u\|_{H^s}^2 & \leq &  \sum_{\eta \in \Z^d} \Big(  \sum_{\begin{subarray}{c}
\xi \in \Z^d \\
\langle  \eta - \xi \rangle \leq \delta \langle \xi \rangle
\end{subarray}} \langle \eta \rangle^{ s} |\psi_\delta(\eta - \xi, \xi)|  |\widehat a(\eta - \xi)| | \widehat u(\xi)| \Big)^2\,. \label{Ta u s} 
\end{eqnarray}
Since $\langle \xi - \eta \rangle \leq \delta \langle \xi \rangle$, it holds
$$
\langle \eta \rangle^s \lesssim_s  \langle \xi \rangle^s + \langle \eta - \xi \rangle^s \lesssim_s  \langle \xi \rangle^s\,. 
$$
Therefore, using the Cauchy-Schwartz inequality and the fact that $\sum_{\xi \in \Z^d} \langle \eta - \xi \rangle^{2 s_0} = \sum_{k \in \Z^d} \langle k \rangle^{- 2 s_0} = C(s_0) < + \infty$, as $s_0 > d/2$, we conclude that
\begin{align*}
\eqref{Ta u s} & \lesssim_s  \sum_{\eta \in \Z^d} \Big( \sum_{\xi \in \Z^d} \langle \xi \rangle^s |\widehat a(\eta - \xi)| |\widehat u (\xi)| \Big)^2 \nonumber\\
& \lesssim_s  \sum_{\eta \in \Z^d} \Big( \sum_{\xi \in \Z^d} \langle \eta - \xi \rangle^{s_0} |\widehat a(\eta- \xi)| \langle \xi \rangle^s |\widehat u (\xi )| \frac{1}{\langle \eta - \xi \rangle^{s_0}} \Big)^2  \nonumber\\
& \lesssim_s  \sum_{\xi, \eta \in \Z^d} \langle \eta - \xi \rangle^{2 s_0} |\widehat a(\eta - \xi)|^2 \langle \xi \rangle^{2 s} |\widehat u(\xi)|^2 \nonumber\\
& \lesssim_s \sum_{\xi \in \Z^d} \langle \xi \rangle^{2 s} |\widehat u (\xi)|^2 \sum_{\eta \in \Z^d} \langle \eta - \xi \rangle^{2 s_0} |\widehat a(\eta - \xi)|^2  \nonumber\\
& \lesssim_s \| a \|_{H^{s_0}} \| u \|_{H^s}\,. 
\end{align*}
The latter estimate then implies \eqref{stima Ta}.
\end{proof}

Given two Sobolev functions $a, u \in H^s$, $s > s_0 > d/2$, we can split their product in the following way: 
\begin{equation}\label{paraprodotto}
a u = T_a u + T_u a + {\mathcal R}_B(a, u)\,, 
\end{equation}
where the remainder ${\mathcal R}_B(a, u)$ is defined as follows:
\begin{equation}\label{resto paraprodotto}
{\mathcal R}_B(a, u) := \sum_{\eta, \xi \in \Z^d} \omega(\eta, \xi)\widehat a(\eta) \widehat u(\xi) e^{\ii x \cdot (\eta + \xi)}\,,\quad \omega(\eta, \xi) := 1- \psi_\delta(\eta, \xi)- \psi_\delta(\xi, \eta)\,.
\end{equation}
We remark that that for $0 < \delta < 1$ sufficiently small, there are two positive constants $0 < C_1 < C_2$ such that 
\begin{equation}\label{supporto di omega}
{\rm Supp}(\omega) \subseteq \big\{ (\eta, \xi) \in \R^{2 d} :  C_1 \langle \xi \rangle \leq \langle \eta \rangle \leq C_2 \langle \xi \rangle  \big\}\,.
\end{equation}
In the following theorem,  we  estimate of the remainder ${\mathcal R}_B(a, u)$ in Sobolev spaces, showing that it is a regularizing operator both in $a$ and $u$.
\begin{lemma}\label{stima astratta resto paraproduct}
Let $s_1, s_2 \geq 0$, $s_0 > d/2$, $a \in H^{s_0 + s_1}(\T^d)$, $u \in H^{s_2}(\T^d)$. The remainder ${\mathcal R}_B(a, u)$ defined in \eqref{resto paraprodotto} satisfies the following estimate: 
\begin{equation}\label{stima resto paraprodotto}
\|{\mathcal R}_B(a, u) \|_{H^{s_1 + s_2}} \lesssim_{s_1, s_2} \| a \|_{H^{s_1 + s_0}} \| u \|_{H^{s_2}}\,.
\end{equation}
\end{lemma}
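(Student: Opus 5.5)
The plan is to argue directly on the Fourier side, mirroring the proof of Lemma \ref{continuit paradiff}. By definition \eqref{resto paraprodotto}, the Fourier coefficient of ${\mathcal R}_B(a,u)$ at a frequency $\zeta$ is
$$
\widehat{{\mathcal R}_B(a,u)}(\zeta) = \sum_{\xi \in \Z^d} \omega(\zeta - \xi, \xi)\, \widehat a(\zeta - \xi)\, \widehat u(\xi).
$$
The sum is restricted to the support \eqref{supporto di omega}, where $C_1 \langle \xi \rangle \leq \langle \zeta - \xi \rangle \leq C_2 \langle \xi \rangle$. On this set the output frequency satisfies
$$
\langle \zeta \rangle \lesssim \langle \zeta - \xi \rangle + \langle \xi \rangle \lesssim \langle \zeta - \xi \rangle .
$$
Note that $|\omega| \leq 3$ is bounded, so it can be dropped from the estimates.

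The key step is distributing the weight $\langle \zeta \rangle^{s_1+s_2}$. Using $\langle \zeta\rangle \lesssim \langle \zeta-\xi\rangle$ and $\langle \xi\rangle \gtrsim \langle \zeta-\xi\rangle$ on the support, I write
$$
\langle \zeta \rangle^{s_1+s_2} \lesssim_{s_1,s_2} \langle \zeta-\xi\rangle^{s_1} \langle \zeta - \xi\rangle^{s_2} \lesssim \langle \zeta-\xi\rangle^{s_1}\langle \xi\rangle^{s_2}.
$$
Here $s_1,s_2\ge 0$ is used so that the powers preserve the inequalities. This gives the pointwise bound
$$
\langle \zeta \rangle^{s_1+s_2} |\widehat{{\mathcal R}_B(a,u)}(\zeta)| \lesssim \sum_{\xi} \langle \zeta-\xi\rangle^{s_1}|\widehat a(\zeta-\xi)|\, \langle \xi\rangle^{s_2}|\widehat u(\xi)|.
$$

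Next I bound this convolution in $\ell^2_\zeta$. I use Young's inequality $\ell^1 * \ell^2 \subset \ell^2$, placing the $a$-factor in $\ell^1$. By Cauchy–Schwarz,
$$
\sum_\eta \langle \eta\rangle^{s_1}|\widehat a(\eta)| \leq \Big(\sum_\eta \langle\eta\rangle^{-2s_0}\Big)^{1/2}\|a\|_{H^{s_1+s_0}} \lesssim_{s_0} \|a\|_{H^{s_1+s_0}},
$$
since $s_0>d/2$. The $u$-factor has $\ell^2$ norm $\|u\|_{H^{s_2}}$. Together these give \eqref{stima resto paraprodotto}. Alternatively, one can repeat the weighted Cauchy–Schwarz chain written in the proof of Lemma \ref{continuit paradiff}, with the roles of $a$ and $u$ as above.

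The only real point to check is \eqref{supporto di omega} itself, i.e.\ that $\omega$ vanishes unless the two frequencies are comparable. Here is the check. If $\langle\eta\rangle\le \frac{\delta}{2}\langle\xi\rangle$, then $\psi_\delta(\eta,\xi)=1$. Moreover, for $\delta<1$ small, $\langle \xi\rangle \ge \langle \eta\rangle > \delta\langle\eta\rangle$, so $\psi_\delta(\xi,\eta)=0$, and hence $\omega=0$. The symmetric case is identical. So on the support we have $\frac{\delta}{2}\langle\xi\rangle\le\langle\eta\rangle\le \frac{2}{\delta}\langle\xi\rangle$, and the constants may depend on $\delta$, which is fixed.
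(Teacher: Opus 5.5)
Your proof is correct and follows essentially the paper's own route: you write the Fourier coefficient of $\mathcal{R}_B(a,u)$ as a convolution, use the comparability of the two input frequencies on ${\rm supp}\,\omega$ to get $\langle\zeta\rangle^{s_1+s_2}\lesssim\langle\zeta-\xi\rangle^{s_1}\langle\xi\rangle^{s_2}$, and close with Cauchy--Schwarz using $s_0>d/2$. Phrasing that last step as Young's $\ell^1*\ell^2\subset\ell^2$ is equivalent to the paper's weighted Cauchy--Schwarz, and your explicit check of \eqref{supporto di omega}, which the paper only asserts, is a good addition.
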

\begin{proof}
Let $f := {\mathcal R}(a, u)$. We have 
\begin{eqnarray}
\widehat f(\xi) & =  \sum_{\eta \in \Z^d} \omega(\xi - \eta, \eta) \widehat a(\xi - \eta) \widehat u(\eta)\,.
\end{eqnarray}
As a consequence,
\begin{eqnarray}
\| f \|_{H^{s_1 + s_2}}^2 & = & \sum_{\xi \in \Z^d} \langle \xi \rangle^{2(s_1 + s_2)} |\widehat f(\xi)|^2 \leq \sum_{\xi \in \Z^d} \Big( \sum_{\eta \in \Z^d} \langle \xi \rangle^{s_1 + s_2} |\omega(\xi - \eta, \eta)| | \widehat a(\xi - \eta)|| \widehat u(\eta)| \Big)^2 \nonumber\\
& \stackrel{\eqref{supporto di omega}}{\leq} & \sum_{\xi \in \Z^d} \Big( \sum_{ C_1 \langle \eta \rangle \leq \langle \xi - \eta\rangle \leq C_2 \langle \eta \rangle} \langle \xi \rangle^{s_1 + s_2} | \widehat a(\xi - \eta)|| \widehat u(\eta)| \Big)^2\,. \label{marcolino}
\end{eqnarray}
We notice that 
$$
\begin{aligned}
& \langle \xi \rangle \lesssim \langle \eta \rangle + \langle \eta - \xi \rangle \lesssim \langle \eta - \xi \rangle\,, \\
&  \langle \xi \rangle  \lesssim \langle \eta \rangle + \langle \eta - \xi \rangle \lesssim \langle \eta \rangle\,.
\end{aligned}
$$
Therefore, 
$$
\langle \xi \rangle^{s_1 + s_2} \lesssim_{s_1, s_2} \langle \xi - \eta \rangle^{s_1}\langle \eta\rangle^{s_2}.
$$
Thus, by using the Cauchy-Schwartz inequality, it follows that 
\begin{eqnarray}
\eqref{marcolino} & \lesssim_{s_1, s_2} &  \sum_{\xi, \eta \in \Z^d} \langle \xi - \eta \rangle^{2(s_1 + s_0)} \widehat a(\xi - \eta) \langle \eta \rangle^{2 s_2} \widehat u(\eta) \lesssim_{s_1, s_2}  \| a \|_{H^{s_1 + s_0}} \| u \|_{H^{s_2}}\,, \nonumber
\end{eqnarray}
which implies estimate \eqref{stima resto paraprodotto}.
\end{proof}

\begin{lemma}\label{lemma commutatore Lambda s}
Let $a \in H^{s_0 + 1}(\T^d)$, $s_0 > \frac{d}{2} $, $k \in \{ 1, \ldots, d \}$. Then for any $s \geq 0$, $u \in H^s(\T^d)$, one has that 
$$
\| [ \Lambda^s, T_a \partial_{x_k}] u \|_{L^2} \lesssim_s \| a \|_{H^{s_0 + 1}} \| u \|_{H^s}\,. 
$$
\end{lemma}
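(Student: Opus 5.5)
We work entirely on the Fourier side, in the spirit of Lemma \ref{continuit paradiff}. From \eqref{def paraproduct}, the operator $T_a\partial_{x_k}$ acts on Fourier coefficients by
$$
\widehat{T_a \partial_{x_k} u}(\eta) = \sum_{\xi \in \Z^d} \psi_\delta(\eta - \xi, \xi)\, \widehat a(\eta - \xi)\, \ii \xi_k\, \widehat u(\xi).
$$
Since $\Lambda^s$ is the Fourier multiplier $\langle \xi \rangle^s$, the commutator has coefficients
$$
\widehat{[\Lambda^s, T_a\partial_{x_k}] u}(\eta) = \sum_{\xi} \big(\langle \eta \rangle^s - \langle \xi \rangle^s\big)\, \psi_\delta(\eta-\xi,\xi)\, \widehat a(\eta-\xi)\, \ii\xi_k\, \widehat u(\xi).
$$
The plan is to show that the symbol $(\langle \eta \rangle^s - \langle \xi \rangle^s)\xi_k$ is bounded by $\langle \eta-\xi\rangle \langle \xi \rangle^s$ on the support of $\psi_\delta(\eta-\xi,\xi)$. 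This moves one derivative from $u$ onto $a$. The bound then closes with the same Cauchy--Schwarz/Young argument used for Lemma \ref{continuit paradiff}, with $\widehat a$ replaced by $\langle \cdot \rangle \widehat a$.

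The key pointwise estimate is as follows. On the support we have $\langle \eta - \xi \rangle \le \delta \langle \xi \rangle$ with $\delta<1$, so every point of the segment $\zeta_\theta := \xi + \theta(\eta-\xi)$, $\theta\in[0,1]$, satisfies $\langle \zeta_\theta\rangle \sim \langle \xi \rangle$. Here we use $\langle \zeta_\theta \rangle \le \langle \xi\rangle + |\eta-\xi|$ and $\langle \zeta_\theta\rangle \ge \langle \xi \rangle - |\eta-\xi| \ge (1-\delta)\langle\xi\rangle$, up to the harmless discrepancy between $|\cdot|$ and $\langle\cdot\rangle$. The mean value theorem applied to $g(\zeta)=\langle\zeta\rangle^s$, with $|\nabla g(\zeta)| \le s\langle \zeta\rangle^{s-1}$, then gives
$$
\big|\langle \eta \rangle^s - \langle \xi \rangle^s\big| \lesssim_s |\eta-\xi|\, \langle \xi\rangle^{s-1}.
$$
Multiplying by $|\xi_k| \le \langle \xi \rangle$ yields the bound $\lesssim_s \langle \eta - \xi\rangle \langle \xi \rangle^{s}$. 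For $0\le s<1$ the gradient bound $\langle\zeta\rangle^{s-1}\lesssim\langle\xi\rangle^{s-1}$ uses the lower comparability, and for $s\ge1$ it uses the upper one, so the estimate holds for all $s \ge 0$.

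The summation step then runs as follows:
$$
\|[\Lambda^s, T_a\partial_{x_k}]u\|_{L^2}^2 \lesssim_s \sum_\eta \Big( \sum_\xi \langle \eta-\xi\rangle |\widehat a(\eta-\xi)|\, \langle \xi\rangle^s |\widehat u(\xi)|\Big)^2 .
$$
We insert $\langle \eta-\xi\rangle^{s_0}\langle\eta-\xi\rangle^{-s_0}$ and apply Cauchy--Schwarz in $\xi$, using $\sum_{k}\langle k\rangle^{-2s_0}<\infty$ since $s_0>d/2$. Summing over $\eta$ gives
$$
\lesssim_s \|a\|_{H^{s_0+1}}^2 \|u\|_{H^s}^2 .
$$
The only genuinely delicate point is the mean value estimate with uniform comparability along the segment, which is exactly why the cutoff $\psi_\delta$ with $\delta<1$ is needed. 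The rest is a verbatim repetition of the proof of Lemma \ref{continuit paradiff}.
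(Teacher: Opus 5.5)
Your proof is correct and follows the paper's argument essentially verbatim. You use the same Fourier kernel $(\langle\eta\rangle^s-\langle\xi\rangle^s)\,\ii\xi_k\,\psi_\delta(\eta-\xi,\xi)$, the same mean value bound by $\langle\eta-\xi\rangle\langle\xi\rangle^s$ on the support $\langle\eta-\xi\rangle\le\delta\langle\xi\rangle$, and the same Cauchy--Schwarz step with $\sum_k\langle k\rangle^{-2s_0}<\infty$. Your two-sided comparability $\langle\zeta_\theta\rangle\sim\langle\xi\rangle$ along the segment is slightly more careful than the paper, which only records $\langle\eta\rangle\lesssim\langle\xi\rangle$ and so leaves implicit the lower bound needed when $0\le s<1$.
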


\begin{proof}
Let $A := [\Lambda^s, T_a \partial_{x_k} ]$. A direct calculation shows that
$$
\widehat{A u}(\eta) =  \sum_{\xi \in \Z^d} g (\eta, \xi) \widehat a(\eta - \xi) \widehat u(\xi),
$$
where 
$$
g (\eta, \xi) :=  \ii \big( \langle \eta \rangle^s - \langle  \xi \rangle^s\big)  \xi_k \psi_\delta(\eta - \xi\,,\, \xi).
$$
Because of the cut-off function $\psi_\delta$, one has that 
$$
{\rm supp}( g ) \subseteq \Big\{ (\eta, \xi) \in \R^d \times \R^d : \langle \eta - \xi \rangle \leq \delta \langle \xi \rangle \Big\}\,.
$$
Moreover, by the mean value theorem 
$$
|\langle \eta \rangle^s - \langle \xi \rangle^s| \lesssim_s (\langle \eta \rangle^{s - 1} + \langle \xi \rangle^{s - 1})|\eta - \xi|.
$$
Next, on the support of $m$ we have
$$
\langle \eta \rangle \lesssim \langle \eta - \xi\rangle + \langle \xi \rangle \lesssim \langle \xi \rangle,
$$
so that
$$
|g  (\eta, \xi)| \lesssim \langle \xi \rangle^s \langle \eta - \xi \rangle\,.
$$
Since  $2s_0  > d$ and hence $\sum_{\xi} \langle \eta - \xi \rangle^{- 2 s_0 } = \sum_{k} \langle k \rangle^{- 2s_0 } = C(s_0) < + \infty$, the Cauchy-Schwartz inequality implies
$$
\begin{aligned}
\| A u \|_{L^2}^2 & \lesssim_s \sum_{\eta \in \Z^d} \Big( \sum_{\xi \in \Z^d}  \langle \xi \rangle^s \langle \eta - \xi \rangle^{s_0 + 1} |\widehat a(\eta - \xi)| |\widehat u(\xi)| \frac{1}{\langle \eta - \xi \rangle^{s_0 }} \Big)^2 \\
& \lesssim_s  \sum_{\eta, \xi \in \Z^d} \langle \xi \rangle^{2 s} |\widehat u(\xi)|^2 \langle \eta - \xi \rangle^{2 (s_0 + 1)} |\widehat a(\eta - \xi)|^2 \lesssim_s \| a \|_{H^{s_0 + 1}}^2 \| u \|_{H^s}^2, 
\end{aligned}
$$
which establishes the claimed estimate.
\end{proof}

\begin{lemma}\label{lemma adjoint paraproduct}
Let $\rho > 0$, $s_0 > d/2$, $a \in H^{s_0 + \rho}(\T^d)$,  be real valued. Then for any $s \geq 0$, $u \in H^s(\T^d)$, one has 
$$
\| (T_a^* - T_a) u \|_{H^{s + \rho}} \lesssim_{s, \rho} \| a \|_{H^{s_0 + \rho}} \| u \|_{H^s}\,. 
$$
\end{lemma}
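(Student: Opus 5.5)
We work on the Fourier side, exactly as in the proofs of Lemma \ref{continuit paradiff} and Lemma \ref{lemma commutatore Lambda s}. From \eqref{def paraproduct}, the operator $T_a$ has Fourier matrix elements
$$
\widehat{T_a u}(\eta) = \sum_{\xi \in \Z^d} \psi_\delta(\eta - \xi, \xi)\, \widehat a(\eta - \xi)\, \widehat u(\xi).
$$
Since $a$ is real valued, $\overline{\widehat a(k)} = \widehat a(-k)$. The adjoint in $L^2$ therefore has matrix elements
$$
(T_a^*)_{\eta \xi} = \overline{(T_a)_{\xi \eta}} = \psi_\delta(\xi - \eta, \eta)\, \widehat a(\eta - \xi).
$$
This gives the representation
$$
\widehat{(T_a^* - T_a) u}(\eta) = \sum_{\xi} \big( \psi_\delta(\xi - \eta, \eta) - \psi_\delta(\eta - \xi, \xi) \big) \widehat a(\eta - \xi) \widehat u(\xi) =: \sum_\xi m(\eta, \xi)\, \widehat a(\eta - \xi)\, \widehat u(\xi).
$$

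The key step is a pointwise bound on the symbol difference. Write $k = \eta - \xi$, so that $\langle \xi - \eta\rangle = \langle k \rangle$ and
$$
m = \psi\Big( \frac{\langle k \rangle}{\delta \langle \eta \rangle}\Big) - \psi\Big( \frac{\langle k \rangle}{\delta \langle \xi \rangle}\Big).
$$
On the support of either term, $\langle k \rangle \le \delta \langle \xi\rangle$ or $\langle k \rangle \le \delta\langle \eta\rangle$. Since $\delta<1$, this forces $\langle \eta \rangle \sim \langle \xi \rangle$ and $\langle k \rangle \lesssim \langle \xi \rangle$.

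By the mean value theorem applied to $\psi$, we get
$$
|m| \lesssim \langle k \rangle\, \Big| \frac{1}{\langle \eta \rangle} - \frac{1}{\langle \xi \rangle}\Big| \lesssim \langle k \rangle \frac{|\eta - \xi|}{\langle \xi \rangle^2} \lesssim \frac{\langle k \rangle^2}{\langle \xi \rangle^2}.
$$
Combining this with the trivial bound $|m| \le 2$ and with $\langle k \rangle \lesssim \langle \xi\rangle$ on the support, we can interpolate: for any $\rho \ge 0$,
$$
|m(\eta,\xi)| \lesssim_\rho \langle k \rangle^{\rho} \langle \xi \rangle^{-\rho}.
$$
For $\rho \le 2$ this follows from the minimum of the two bounds. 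For $\rho > 2$ it follows from $\langle k\rangle/\langle \xi\rangle \lesssim 1$: indeed $(\langle k\rangle/\langle\xi\rangle)^2 \lesssim (\langle k\rangle/\langle\xi\rangle)^\rho$ up to a constant on the region where $m \neq 0$, because there $\langle k\rangle \gtrsim \delta \langle \xi\rangle$ up to constants.

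This last point is where the care lies, and we check it directly. If both $\psi$ terms equal $1$ then $m = 0$. The two terms can differ only when $\langle k \rangle \ge \tfrac{\delta}{2} \min(\langle \eta\rangle, \langle \xi \rangle) \gtrsim \delta \langle \xi\rangle$. So on ${\rm supp}(m)$ we have $\langle k \rangle \sim_\delta \langle \xi \rangle$, and the bound $|m| \lesssim_\rho \langle k\rangle^\rho \langle \xi\rangle^{-\rho}$ holds trivially for every $\rho \geq 0$. In fact this observation alone suffices, and the mean value computation above is only a safety check.

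With the symbol bound in hand, we conclude as in Lemma \ref{lemma commutatore Lambda s}. On the support, $\langle \eta \rangle^{s+\rho} \lesssim \langle \xi\rangle^{s} \langle \xi \rangle^{\rho}$, so
$$
\langle \eta \rangle^{s+\rho} |m| \lesssim_{s,\rho} \langle \xi \rangle^{s} \langle k \rangle^{\rho}.
$$
Then
$$
\|(T_a^*-T_a)u\|_{H^{s+\rho}}^2 \lesssim \sum_\eta \Big( \sum_\xi \langle k\rangle^{s_0+\rho} |\widehat a(k)|\, \langle \xi\rangle^s |\widehat u(\xi)|\, \langle k \rangle^{-s_0}\Big)^2.
$$
Cauchy–Schwarz with $\sum_k \langle k \rangle^{-2s_0} < \infty$ (valid since $s_0 > d/2$) gives the bound $\lesssim \|a\|_{H^{s_0+\rho}}^2 \|u\|_{H^s}^2$.

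The only real obstacle is verifying the support property of $m$, namely that $m$ vanishes unless $\langle \eta-\xi\rangle \gtrsim \delta\langle \xi\rangle$. This follows from $\psi = 1$ on $[-1/2,1/2]$ together with $\langle\eta\rangle\sim\langle\xi\rangle$.
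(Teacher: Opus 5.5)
Your proof is correct and follows essentially the same route as the paper. Both use the same Fourier representation of $T_a^*-T_a$ with the bounded symbol $\psi_\delta(\eta-\xi,\eta)-\psi_\delta(\eta-\xi,\xi)$, the same support analysis giving $\langle\eta\rangle\sim\langle\xi\rangle\sim\langle\eta-\xi\rangle$ there (hence $\langle\eta\rangle^{s+\rho}\lesssim\langle\xi\rangle^{s}\langle\eta-\xi\rangle^{\rho}$), and the same Cauchy--Schwarz step using $s_0>d/2$; as you note yourself, the mean value computation is unnecessary, since the paper uses only the trivial bound $|g|\le 2$ together with this support property.
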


\begin{proof}
A direct calculation shows that 
$$
T_a^* u(x) = \sum_{\eta \in \Z^d} \Big( \sum_{\xi \in \Z^d} \overline{\widehat \sigma_a(\xi - \eta, \eta)} \widehat u(\xi)  \Big)e^{\ii \eta \cdot x} = \sum_{\eta \in \Z^d} \Big( \sum_{\xi \in \Z^d}  \psi_\delta(\xi - \eta, \eta) \overline{\widehat a(\xi - \eta)} \widehat u(\xi)  \Big)e^{\ii \eta \cdot x}
$$
Then, by the definition of $\psi_\delta$ and using that $a$ is real valued ($\overline{\widehat a(\xi - \eta)} = \widehat a(\eta - \xi)$), we have that
$$
T_a^* u(x) =  \sum_{\eta \in \Z^d} \Big( \sum_{\xi \in \Z^d}  \psi_\delta(\eta - \xi, \eta) \widehat a(\eta - \xi) \widehat u(\xi)  \Big)e^{\ii \eta \cdot x}\,.
$$
Thus ${\mathcal R} := T_a^* - T_a$ is 
$$
{\mathcal R}u(x) = \sum_{\eta \in \Z^d} \sum_{\xi \in\Z^d} g (\eta, \xi) \widehat a(\eta - \xi) \widehat u(\xi)  e^{\ii x \cdot \eta}\,,
$$
where
$$
g (\eta, \xi) := \psi_\delta(\eta - \xi, \eta) - \psi_\delta(\eta - \xi, \xi)\,.
$$
and 
$$
|g (\eta, \xi)| \leq 2 \quad \forall (\eta, \xi) \in \R^d \times \R^d\,. 
$$
We next study the support of $g $. By the properties of the function $\psi_\delta$, one has that 
$$
{\rm supp}(g ) \subseteq \Big\{ (\eta, \xi) \in \R^d \times \R^d : \frac{\delta}{2} {\rm min}(\langle \eta \rangle, \langle \xi \rangle) \leq  \langle \eta - \xi \rangle \leq \delta {\rm max}(\langle \eta \rangle, \langle \xi \rangle) \Big\}\,. 
$$
Then if $(\eta, \xi) \in {\rm supp}( g ) $, the following estimates hold for a positive constant $C_0 > 0$:
$$
\begin{aligned}
& \langle \eta \rangle \lesssim \langle \xi \rangle + \langle \eta - \xi \rangle \lesssim  \langle \xi \rangle + \delta {\rm max}(\langle \eta \rangle, \langle \xi \rangle) \leq C_0(1 + \delta)  \langle \xi \rangle + C_0 \delta \langle \eta \rangle, \\
& \langle \xi \rangle \lesssim \langle \eta \rangle + \langle \xi - \eta \rangle \lesssim \langle \eta \rangle + \delta {\rm max}(\langle \eta \rangle, \langle \xi \rangle) \leq C_0 (1 + \delta) \langle \eta \rangle + C_0 \delta \langle \xi \rangle.
\end{aligned} 
$$
 Hence, by choosing $0 < \delta \ll 1$ so small that $1 - C_0 \delta > 0$, we can conclude that
$$
\langle \xi \rangle \leq K_1 \langle \eta \rangle, \quad \langle \eta \rangle \leq K_1 \langle \xi \rangle, \quad K_1 := \frac{C_0(1 + \delta)}{1 - C_0 \delta}.
$$
The above inequality then also implies  
$$
\langle \eta \rangle \lesssim {\rm max}(\langle \eta \rangle , \langle \xi \rangle) \lesssim {\rm min}(\langle \eta \rangle , \langle \xi \rangle) \lesssim \langle \eta - \xi \rangle\,.
$$
Consequently,
\begin{equation}\label{prop supporto m}
{\rm supp}( g ) \subseteq \Big\{ (\eta, \xi) \in \R^d \times \R^d : \langle \eta \rangle \lesssim \langle \xi \rangle, \quad \langle \eta \rangle \lesssim  \langle \eta - \xi \rangle  \Big\}\,. 
\end{equation}
Now, by \eqref{prop supporto m} for any $(\eta, \xi) \in {\rm supp}( g )$ it holds
$$
\langle \eta \rangle^{s + \rho} = \langle \eta \rangle^s \langle \eta \rangle^\rho \lesssim_{s, \rho} \langle \xi \rangle^s \langle \eta - \xi \rangle^\rho.
$$
Therefore,  the Cauchy-Schwartz inequality, using that $s_0 > \frac{d}{2}$ and $\sum_{\xi }\langle \eta - \xi \rangle^{- 2 s_0} = \sum_{k} \langle k \rangle^{- 2 s_0} = C(s_0) < + \infty$, gives
$$
\begin{aligned}
\| {\mathcal R} u \|_{H^{s + \rho}}^2 & \leq \sum_{\eta \in \Z^d} \Big( \sum_{\xi \in \Z^d} \langle \eta \rangle^{s + \rho} | g (\eta, \xi)| |\widehat a(\eta- \xi)| |\widehat u(\xi)|\Big)^2 \\
& \lesssim_{s, \rho}  \sum_{\eta \in \Z^d} \Big( \sum_{\xi \in \Z^d} \langle \eta - \xi \rangle^{\rho + s_0} | g (\eta, \xi)| |\widehat a(\eta- \xi)|  \langle \xi \rangle^s|\widehat u(\xi)| \dfrac{1}{\langle \eta - \xi \rangle^{s_0}}\Big)^2 \\
& \lesssim_{s, \rho} \sum_{\xi \in \Z^d} \langle \xi \rangle^{2 s} |\widehat u(\xi)|^2 \sum_{\eta \in \Z^d} \langle \eta - \xi \rangle^{2(s_0 + \rho)} |\widehat a(\eta - \xi)|^2 \lesssim \| u \|_{H^s}^2 \| a \|_{H^{s_0 + \rho}}^2\,,
\end{aligned}
$$
which implies the claimed bound. 
\end{proof}

From the last lemma we can deduce the following result.

\begin{lemma}\label{lemma aggiunto Ta dx}
Let $k \in \{ 1, \ldots, d \}$, $s_0 > d/2$, $a \in H^{s_0 + 1}(\T^d)$,  be real valued. Then for any $s \geq 0$, $u \in H^s(\T^d)$, one has 
$$
\Big\| \Big[ T_a \partial_{x_k} + (T_a \partial_{x_k})^* \Big] u \Big\|_{H^{s }} \lesssim_s \| a \|_{H^{s_0 + 1}} \| u \|_{H^s}\,. 
$$
\end{lemma}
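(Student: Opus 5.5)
We write $(T_a \partial_{x_k})^* = \partial_{x_k}^* T_a^* = -\partial_{x_k} T_a^*$, so that
$$
T_a \partial_{x_k} + (T_a \partial_{x_k})^* = T_a \partial_{x_k} - \partial_{x_k} T_a^* = \big(T_a \partial_{x_k} - \partial_{x_k} T_a\big) - \partial_{x_k}\big(T_a^* - T_a\big).
$$
The task then reduces to estimating two pieces separately. The first is the commutator $[T_a, \partial_{x_k}]$. The second is $\partial_{x_k}$ applied to $T_a^* - T_a$, which is exactly the operator treated in Lemma \ref{lemma adjoint paraproduct}.

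For the second piece, we apply Lemma \ref{lemma adjoint paraproduct} with $\rho = 1$; this is allowed since $a \in H^{s_0+1}$ is real valued. It gives
$$
\|(T_a^* - T_a) u\|_{H^{s+1}} \lesssim_s \|a\|_{H^{s_0+1}} \|u\|_{H^s},
$$
and hence
$$
\|\partial_{x_k}(T_a^* - T_a)u\|_{H^s} \le \|(T_a^*-T_a)u\|_{H^{s+1}} \lesssim_s \|a\|_{H^{s_0+1}}\|u\|_{H^s}.
$$
The gain of one derivative from the lemma exactly compensates the loss from $\partial_{x_k}$.

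For the first piece, we compute on the Fourier side as in the proof of Lemma \ref{continuit paradiff}:
$$
\widehat{T_a u}(\eta) = \sum_\xi \psi_\delta(\eta-\xi,\xi)\,\widehat a(\eta-\xi)\,\widehat u(\xi).
$$
It follows that
$$
\widehat{[\partial_{x_k}, T_a]u}(\eta) = \sum_\xi \ii(\eta_k - \xi_k)\,\psi_\delta(\eta-\xi,\xi)\,\widehat a(\eta-\xi)\,\widehat u(\xi) = \widehat{T_{\partial_{x_k} a} u}(\eta).
$$
So $[\partial_{x_k}, T_a] = T_{\partial_{x_k} a}$. Since $\partial_{x_k} a \in H^{s_0}$ with $s_0 > d/2$, Lemma \ref{continuit paradiff} yields
$$
\|T_{\partial_{x_k}a} u\|_{H^s} \lesssim_s \|\partial_{x_k} a\|_{H^{s_0}}\|u\|_{H^s} \le \|a\|_{H^{s_0+1}}\|u\|_{H^s}.
$$
Combining the two bounds by the triangle inequality gives the claim.

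The only delicate point is the adjoint identity $(T_a\partial_{x_k})^* = -\partial_{x_k}T_a^*$. This should first be checked on trigonometric polynomials, since $\partial_{x_k}$ is skew-adjoint on $L^2(\T^d)$. It then extends to $H^s$ by density, because all the operators involved are bounded between the relevant Sobolev spaces by the estimates above. One should also note that $T_a^*$ is given by the explicit formula in Lemma \ref{lemma adjoint paraproduct}, so that the composition $\partial_{x_k}T_a^*$ makes sense on $H^s$.
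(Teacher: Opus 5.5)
Your proposal is correct and follows the paper's proof essentially verbatim. The paper uses the same decomposition $T_a\partial_{x_k} + (T_a\partial_{x_k})^* = -T_{\partial_{x_k}a} - \partial_{x_k}(T_a^* - T_a)$, bounding the first term by Lemma \ref{continuit paradiff} and the second by Lemma \ref{lemma adjoint paraproduct} with $\rho = 1$; your Fourier-side check of $[\partial_{x_k}, T_a] = T_{\partial_{x_k}a}$ and your density remark on the adjoint identity simply spell out steps the paper performs without comment.
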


\begin{proof}
We set  ${\mathcal R}_a := T_{a}^* - T_a $. Then 
$$
\begin{aligned}
(T_a \partial_{x_k})^* + T_a \partial_{x_k} & = - \partial_{x_k} T_a^* + T_a \partial_{x_k} = - \partial_{x_k} T_a + T_a \partial_{x_k} - \partial_{x_k} {\mathcal R}_a \\
& = - T_a \partial_{x_k} + T_a \partial_{x_k} - T_{\partial_{x_k} a} - \partial_{x_k} {\mathcal R}_a =  - T_{\partial_{x_k} a} - \partial_{x_k} {\mathcal R}_a \,.
\end{aligned}
$$
By Lemma \ref{continuit paradiff}, since $a \in H^{s_0 + 1}$, $s_0 > \frac{d}{2}$, one has that for any $s \geq 0$, 
$$
\| T_{\partial_{x_k} a} \|_{{\mathcal B}(H^{s})} \lesssim_s \| \partial_{x_k} a \|_{H^{s_0 }} \lesssim_s \| a \|_{H^{s_0 + 1}}\,.
$$
Moreover, applying Lemma \ref{lemma adjoint paraproduct} with $\rho = 1$ gives for any $s \geq 0$, $u \in H^s(\T^d)$, 
$$
\begin{aligned}
\| \partial_{x_k} {\mathcal R}_a u \|_{H^s} & \leq \| {\mathcal R}_a u \|_{H^{s + 1}} \lesssim \| a \|_{H^{s_0 + 1}} \| u \|_{H^s}\,,
\end{aligned}
$$
which concludes the proof.
\end{proof}

\begin{lemma}\label{commutatore mollifier}
Let ${\mathcal S}_M$ be the mollifier defined in \eqref{def cal SK} and  let $s_0 > \frac{d}{2}$, $a \in H^{s_0 + 1}(\T^d)$. Then for any $s \geq 0$, one has that $\| [{\mathcal S}_M, T_a \partial_{x_k} ] \|_{{\mathcal B}(H^s)} \lesssim_s \| a \|_{H^{s_0 + 1}}$. 
\end{lemma}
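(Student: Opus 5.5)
We will compute the commutator $[{\mathcal S}_M, T_a \partial_{x_k}]$ explicitly on the Fourier side, as in the proofs of Lemma \ref{lemma commutatore Lambda s} and Lemma \ref{lemma adjoint paraproduct}. Since ${\mathcal S}_M$ is the Fourier multiplier $\chi_M(\xi)$, for $u \in H^s(\T^d)$ we get
$$
\widehat{[{\mathcal S}_M, T_a \partial_{x_k}] u}(\eta) = \sum_{\xi \in \Z^d} g_M(\eta, \xi)\, \widehat a(\eta - \xi)\, \widehat u(\xi), \qquad g_M(\eta, \xi) := \ii\, \big(\chi_M(\eta) - \chi_M(\xi)\big)\, \xi_k\, \psi_\delta(\eta - \xi, \xi).
$$
By \eqref{cut-off-1}, \eqref{cut-off-2}, $g_M$ is supported in $\{\langle \eta - \xi \rangle \leq \delta \langle \xi \rangle\}$. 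There, for $\delta$ small, $\langle \eta \rangle \simeq \langle \xi \rangle$, and the whole segment $\zeta = \xi + \theta(\eta - \xi)$, $\theta \in [0,1]$, satisfies $\langle \zeta \rangle \simeq \langle \xi \rangle$.

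The key step is a bound on $g_M$ that is uniform in $M$. By the mean value theorem and the symbol bound $|\nabla_\xi \chi_M(\zeta)| \lesssim \langle \zeta \rangle^{-1}$ from \eqref{cut off bla.2}, uniform in $M$, we get
$$
|\chi_M(\eta) - \chi_M(\xi)| \lesssim |\eta - \xi| \sup_{\zeta \in [\xi, \eta]} \langle \zeta \rangle^{-1} \lesssim \langle \eta - \xi \rangle \langle \xi \rangle^{-1}.
$$
Hence $|g_M(\eta, \xi)| \lesssim \langle \eta - \xi \rangle$ uniformly in $M$. The derivative falling on $u$ is absorbed by the gain coming from the difference of the cut-offs. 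The only point needing care is that $\chi_M$ is defined on $\R^d$, so the mean value theorem applies along the real segment. The uniformity in $M$ comes from $|\partial_\xi\chi_M(\zeta)| \le M^{-1}\|\nabla\chi\|_\infty$ together with the fact that $\nabla\chi_M$ is supported where $M\le|\zeta|\le 2M$, so that $M^{-1}\lesssim\langle\zeta\rangle^{-1}$ there. I expect this to be the main step, although it is elementary.

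Then we conclude exactly as in Lemma \ref{lemma commutatore Lambda s}. On the support, $\langle \eta \rangle^s \lesssim_s \langle \xi \rangle^s$, and Cauchy--Schwarz with the weight $\langle \eta - \xi \rangle^{-s_0}$, which is summable since $2s_0 > d$, gives
$$
\| [{\mathcal S}_M, T_a \partial_{x_k}] u \|_{H^s}^2 \lesssim_s \sum_{\eta} \Big( \sum_{\xi} \langle \eta - \xi \rangle^{s_0 + 1} |\widehat a(\eta - \xi)|\, \langle \xi \rangle^s |\widehat u(\xi)|\, \langle \eta - \xi \rangle^{-s_0} \Big)^2 \lesssim_s \| a \|_{H^{s_0 + 1}}^2 \| u \|_{H^s}^2.
$$
The constant is independent of $M$, which is precisely what is used in \eqref{stime forzante nuova}.
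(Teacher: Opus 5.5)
Your proposal is correct and follows essentially the same route as the paper: the same Fourier kernel $g(\eta,\xi)=\ii\,(\chi_M(\eta)-\chi_M(\xi))\,\xi_k\,\psi_\delta(\eta-\xi,\xi)$, the mean value theorem along the real segment giving $|g|\lesssim\langle\eta-\xi\rangle$ on the support $\langle\eta-\xi\rangle\le\delta\langle\xi\rangle$, and the same Cauchy--Schwarz step as in Lemma \ref{lemma commutatore Lambda s}. Your explicit justification of uniformity in $M$ makes precise what the paper simply asserts in \eqref{cut off bla.2}. Strictly speaking, it requires $M$ bounded below, e.g.\ $M\ge 1/2$. This is harmless: for $M<1/2$ the commutator vanishes identically on $\Z^d$, and only $M\to+\infty$ is used.
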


\begin{proof}
Let $A := [{\mathcal S}_M, T_a \partial_{x_k} ]$. A direct calculation shows that
$$
\widehat{A u}(\eta) =  \sum_{\xi \in \Z^d} g (\eta, \xi) \widehat a(\eta - \xi) \widehat u(\xi),
$$
where 
$$
g (\eta, \xi) :=  \ii \big( \chi_M(\eta) - \chi_M(\xi)\big)  \xi_k \psi_\delta(\eta - \xi\,,\, \xi)\,.
$$
By the properties of the cut-off function $\psi_\delta$, one has that 
$$
{\rm supp}(g ) \subseteq \Big\{ (\eta, \xi) \in \R^d \times \R^d : \langle \eta - \xi \rangle \leq \delta \langle \xi \rangle \Big\}\,.
$$
Moreover, by the properties of the cut off function $\chi_M$ in \eqref{cut off bla.1}-\eqref{cut off bla.2} and by the mean value theorem 
$$
|\chi_M(\eta) - \chi_M(\xi)| \lesssim \langle \xi + \theta (\eta - \xi) \rangle^{- 1} |\eta - \xi| 
$$
for some $\theta \in [0, 1]$. Since $\langle \eta - \xi \rangle  \leq \delta \langle \xi \rangle$, by choosing $\delta$ small enough, we have that $\langle \xi + \theta (\eta - \xi) \rangle \gtrsim \langle \xi \rangle$. Therefore, 
$$
|\chi_M(\eta) - \chi_M(\xi)| \lesssim \langle \xi \rangle^{- 1} \langle \eta - \xi \rangle\,.
$$
This last bound actually implies that 
$$
|g (\eta, \xi)| \lesssim \langle \eta - \xi \rangle\,. 
$$
Next,  on the support of $m$ it holds
$$
\langle \eta \rangle \lesssim \langle \eta - \xi\rangle + \langle \xi \rangle \lesssim \langle \xi \rangle\,
$$
By the Cauchy-Schwartz inequality, using that $2s_0  > d$ and hence $\sum_{\xi} \langle \eta - \xi \rangle^{- 2 s_0 } = \sum_{k} \langle k \rangle^{- 2s_0 } = C(s_0) < + \infty$, it follows that
$$
\begin{aligned}
\| A u \|_{H^s}^2 & \lesssim_s \sum_{\eta \in \Z^d} \Big( \sum_{\xi \in \Z^d}  \langle \xi \rangle^s \langle \eta - \xi \rangle^{s_0 + 1} |\widehat a(\eta - \xi)| |\widehat u(\xi)| \frac{1}{\langle \eta - \xi \rangle^{s_0 }} \Big)^2 \\
& \lesssim_s  \sum_{\eta, \xi \in \Z^d} \langle \xi \rangle^{2 s} |\widehat u(\xi)|^2 \langle \eta - \xi \rangle^{2 (s_0 + 1)} |\widehat a(\eta - \xi)|^2 \lesssim_s \| a \|_{H^{s_0 + 1}}^2 \| u \|_{H^s}^2 
\end{aligned}
$$
which gives the claimed bound. 
\end{proof}

\subsection*{Acknowledgements}

\noindent D.M. Ambrose is grateful to the National Science Foundation for support through grant
DMS-2307638.
A. Mazzucato is supported by the US National Science Foundation grants DMS-2206453, DMS-2511023.

\noindent
R. Montalto  is  supported by the ERC STARTING GRANT 2021 “Hamiltonian Dynamics, Normal Forms and Water Waves” (HamDyWWa), Project Number: 101039762.  The Views and opinions expressed are however those of the authors only and do not necessarily reflect those of the European Union or the European Research Council. Neither the European Union nor the granting authority can be held responsible for them.

 \subsection*{Statements and declarations}

\medskip

\noindent
The authors state that there is no conflict of interest and certify that they  have no affiliations or involvement with any organization or entity with any
financial interest, or non-financial interest in the subject matter or materials discussed in this manuscript.

Moreover, data sharing is not applicable to this article as no datasets were generated or analyzed during the
current study.


\begin{thebibliography}{100}

\bibitem{ALN24}
D.M. Ambrose, M.C. Lopes Filho, and H.J. Nussenzveig Lopes. {\it Existence and analyticity of solutions of the Kuramoto–Sivashinsky equation with singular data.} Proc. Roy. Soc. Edinburgh Sect. A. Accepted,  2024.


\bibitem{AM19} D.M. Ambrose and A.L. Mazzucato. {\it Global existence and analyticity for the 2D
Kuramoto-Sivashinsky equation.} J. Dynam. Differential Equations, 31(3):1525–1547, 2019.

\bibitem{AM21} D.M. Ambrose and A.L. Mazzucato. {\it Global solutions of the two-dimensional 
Kuramoto-Sivashinsky equation with a linearly growing mode in each direction.}
J. Nonlinear Sci., 31(6):96, 2021.

\bibitem{BBT03} H. Bellout, S. Benachour and E.S. Titi. {\it Finite-time singularity versus global regularity for hyper-viscous Hamilton-Jacobi-like equations.} Nonlinearity, 16(6):1967-1989, 2003.

\bibitem{BKRZ14} S. Benachour, I. Kukavica, W. Rusin, and M. Ziane. {\it Anisotropic estimates for the two-dimensional Kuramoto-Sivashinsky equation.} J. Dynam. Differential Equations, 26(3):461–
476, 2014.


\bibitem{BS07}   A. Biswas and D. Swanson. {\it Existence and generalized Gevrey regularity of solutions
to the Kuramoto-Sivashinsky equation in $\mathbb{R}^{n}$.} J. Differential Equations, 240(1):145–163, 2007


\bibitem{BG06} J.C. Bronski and T.N. Gambill. {\it Uncertainty estimates and $L^2$ bounds for the
Kuramoto-Sivashinsky equation.} Nonlinearity, 19(9):2023–2039, 2006.

\bibitem{CEES93}  P. Collet, J.-P. Eckmann, H. Epstein, and J. Stubbe. {\it Analyticity for the Kuramoto-Sivashinsky equation.} Phys. D, 67(4):321–326, 1993


\bibitem{CZDFM21} M. Coti-Zelati, M. Dolce, Y Feng, A Mazzucato. {\it Global existence for the two-dimensional Kuramoto–Sivashinsky equation with a shear flow.} Journal of Evolution Equations, 21(4), pp. 5079–5099, 2021.

\bibitem{FM22} Y. Feng, A. Mazzucato. {\it Global existence for the two-dimensional Kuramoto-Sivashinsky equation with advection.} Comm. Partial Differential Equations, 47(2), pp. 279–306, 2022.

\bibitem{FNST88}
C. Foias, B. Nicolaenko, G.R. Sell, and R. Temam. {\it Inertial manifolds for the Kuramoto-Sivashinsky equation and an estimate of their lowest dimension.} J. Math. Pures Appl., 67(3):197-226, 1988.

\bibitem{GMP08} V. A. Galaktionov, È. Mitidieri, and S. I. Pokhozhaev. {\it Existence and nonexistence of global
solutions of the Kuramoto-Sivashinsky equation.} Dokl. Akad. Nauk, 419(4):439–442, 2008. 


\bibitem{GO05} L. Giacomelli and F. Otto. {\it New bounds for the Kuramoto-Sivashinsky equation.} Comm. Pure
Appl. Math., 58(3):297–318, 2005.


\bibitem{GJO15} M. Goldman, M. Josien, and F. Otto. {\it New bounds for the inhomogenous
Burgers and the Kuramoto-Sivashinsky equations.} Comm. Partial Differential Equations,
40(12):2237–2265, 2015.


\bibitem{GF19}   D. Goluskin and G. Fantuzzi. {\it Bounds on mean energy in the Kuramoto-Sivashinsky
equation computed using semidefinite programming.} Nonlinearity, 32(5):1705–1730, 2019.



\bibitem{Good94} J. Goodman. {\it Stability of the Kuramoto-Sivashinsky and related systems.} Comm. Pure Appl.
Math., 47(3):293–306, 1994.

\bibitem{Grenier} E. Grenier. {\it On the nonlinear instability
of Euler and Prandtl equations.} Comm. Pure Appl. Math. 53:1067–1091, 2000.


\bibitem{GN19} E. Grenier, T.T.  Nguyen. {\it 
 Instability of Prandtl Layers.} Ann. PDE 5(2):18, 2019.

\bibitem{Gru00}  Z. Grujić. Spatial analyticity on the global attractor for the Kuramoto-Sivashinsky equation. J. Dynam. Differential Equations, 12(1):217–228, 2000.


 \bibitem{SSArXiv07} X. Ioakim and Y.-S. Smyrlis. {\it Analyticity for Kuramoto-Sivashinsky-type equations in two
spatial dimensions.}  Math. Methods Appl. Sci., 39(8):2159–2178, 2016.

\bibitem{JKT90} M.S. Jolly, I.G. Kevrekidis, and E.S. Titi, 
{\it Approximate inertial manifolds for the Kuramoto-Sivashinsky equation: analysis and computations.} 
Phys. D, 44(1-2):38-60, 1990.

\bibitem{KKP15} A. Kalogirou, E.E. Keaveny, and D.T. Papageorgiou.
{\it An in-depth numerical study of the two-dimensional Kuramoto-Sivashinsky equation.}
Proc. A, 471(2179):20140932, 2015. 

\bibitem{KOS-TITI} A. Kostianko, E. Titi, S. Zelik. {\it Large dispersion, averaging and attractors:
three 1D paradigms.} Nonlinearity,   R317–R350 (2018)

\bibitem{KM23} I. Kukavica and D. Massatt. {\it On the global existence for the Kuramoto-Sivashinsky equation.} J. Dynam. Differential Equations, 35(1):69-85, 2023.

\bibitem{Kura} Y. Kuramoto, {\it Diffusion-Induced Chaos in Reaction Systems.} Progress of Theoretical Physics Supplement, 64 (197802), 346–367.

\bibitem{KT76}  Y. Kuramoto, T. Tsuzuki, {\it Persistent propagation of concentration waves in dissipative media far from thermal equilibrium.} Prog. Theo. Phys. 55:356–369, 1976.

\bibitem{LY20} A. Larios and K. Yamazaki. On the well-posedness of an anisotropically-reduced twodimensional Kuramoto-Sivashinsky equation. Phys. D, 411:132560, 14, 2020.


\bibitem{Metivier} G. Metivier. {\it Para-Differential Calculus and Applications to the Cauchy Problem for Nonlinear Systems.}  Edizioni della Normale, CRM Series (Volume 5), 2008.

\bibitem{Mol00}  L. Molinet. {\it A bounded global absorbing set for the Burgers-Sivashinsky equation in space
dimension two.} C. R. Acad. Sci. Paris Sér. I Math., 330(7):635–640, 2000.

\bibitem{Mol00b} L. Molinet.  {\it Local dissipativity in {$L^2$} for the
              {K}uramoto-{S}ivashinsky equation in spatial dimension 2.}
              J. Dynam. Differential Equations, 12(3):533-556, 2000.



\bibitem{NST85} B. Nicolaenko, B. Scheurer, and R. Temam. {\it Some global dynamical properties of the
Kuramoto-Sivashinsky equations: nonlinear stability and attractors.} Phys. D, 16(2):155–183,
1985.

\bibitem{Otto09}   F. Otto. {\it Optimal bounds on the Kuramoto-Sivashinsky equation.} J. Funct. Anal.,
257(7):2188–2245, 2009.

\bibitem{Pazy83} A. Pazy. Semigroups of linear operators and applications to partial differential equations. {\it Applied Mathematical Sciences}, 44. Springer-Verlag, New York, 1983. viii+279 pp.

\bibitem{SellT92} G.R. Sell and M. Taboada. {\it Local dissipativity and attractors for the Kuramoto-Sivashinsky
equation in thin 2D domains.} Nonlinear Anal., 18(7):671–687, 1992.

\bibitem{Siv77} G.I. Sivashinsky, {\it Nonlinear analysis of hydrodynamic instability in laminar flames—i. derivation of basic equations.} Acta Astronautica 4(11):1177–1206, 1977, no. 11, 

\bibitem{Stan} M. Stanislavova, A. Stefanov. {\it Effective estimates of the higher Sobolev norms for the Kuramoto-Sivashinsky equation.} Conference Publications, 2009, 2009(Special): 729-738. doi: 10.3934/proc.2009.2009.729

\bibitem{Stan2} M. Stanislavova, A. Stefanov. {\it The Kuramoto-Sivashinsky equation in ${\mathbb R}^1$ and ${\mathbb R}^2$: effective estimates of the high-frequency tails and higher Sobolev norms.} arXiv:0711.4005 (2007).

\bibitem{Stan3} M. Stanislavova, A. Stefanov. {\it Asymptotic estimates and stability analysis
of Kuramoto-Sivashinsky type models} J. Evol. Equ. 11, 605–635 (2011). 

\bibitem{Tad86}  E. Tadmor. {\it The well-posedness of the Kuramoto-Sivashinsky equation.} SIAM J. Math. Anal.,
17(4):884–893, 1986.

\bibitem{Taylor} M. Taylor. {\it  Pseudodifferential Operators and Nonlinear PDEs.} Birkhauser, Boston, 1991

\bibitem{TKP18}
R.J. Tomlin, A. Kalogirou, and D.T. Papageorgiou. {\it Nonlinear dynamics of a dispersive anisotropic Kuramoto-Sivashinsky equation in two space dimensions.} Proc. A, 474(2211):20170687, 2018.

\end{thebibliography}
\end{document}